\newtheorem{definition}{Definition}
\newtheorem{remark}{Remark}
\newtheorem{theorem}{Theorem}
\newtheorem{corollary}{Corollary}
\newtheorem{lemma}{Lemma}
\newtheorem{example}{Example}
\newcommand{\ten}[1]{\mathcal{#1}}
\newcommand{\tens}[1]{\mathcal{#1}}
\newcommand{\bcirc}{{\rm{bcirc}}}
\newcommand{\unfold}{{\rm{unfold}}}
\newcommand{\fold}{{\rm{fold}}}
\newcommand{\norm}[1]{\left\lVert#1\right\rVert}
\begin{document} \large
\title{Generalized Tensor Function via the Tensor Singular Value Decomposition based on the T-Product}
\author{Yun Miao\footnote{E-mail: 15110180014@fudan.edu.cn. School of Mathematical Sciences, Fudan University, Shanghai, 200433, P. R. of China. Y. Miao is supported by the National Natural Science Foundation of China under grant 11771099. } \quad  Liqun Qi\footnote{ E-mail: maqilq@polyu.edu.hk. Department of Applied Mathematics, the Hong Kong Polytechnic University, Hong Kong. L. Qi is supported by the Hong Kong Research Grant Council (Grant No. PolyU 15302114, 15300715, 15301716 and 15300717)} \quad  Yimin Wei\footnote{Corresponding author. E-mail: ymwei@fudan.edu.cn and yimin.wei@gmail.com. School of Mathematical Sciences and Shanghai Key Laboratory of Contemporary
Applied Mathematics, Fudan University, Shanghai, 200433, P. R. of China. Y. Wei is supported by the Innovation Program of Shanghai Municipal Education Commission.
}}
\maketitle

\begin{abstract}
In this paper, we present the definition of generalized tensor function according to the tensor singular value decomposition (T-SVD) based on the tensor T-product. Also, we introduce the compact singular value decomposition (T-CSVD) of tensors, from which the projection operators and Moore Penrose inverse of tensors are obtained. We establish the Cauchy integral formula for tensors by using the partial isometry tensors and applied it into the solution of tensor equations. Then we establish the generalized tensor power and the Taylor expansion of tensors. Explicit generalized tensor functions are listed. We define the tensor bilinear and sesquilinear forms and proposed theorems on structures preserved by generalized tensor functions. For complex tensors, we established an isomorphism between complex tensors and real tensors. In the last part of our paper, we find that the block circulant operator established an isomorphism between tensors and matrices. This isomorphism is used to prove the F-stochastic structure is invariant under generalized tensor functions. The concept of invariant tensor cones is raised.

\bigskip

  \hspace{-14pt}{\bf Keywords.} T-product, T-SVD, T-CSVD, generalized tensor function, Moore-Penrose inverse, Cauchy integral formula, tensor bilinear form, Jordan algebra, Lie algebra, block tensor multiplication, complex-to-real isomorphism, tensor-to-matrix isomorphism.

  \bigskip

  \hspace{-14pt}{\bf AMS Subject Classifications.} 15A48, 15A69, 65F10, 65H10, 65N22.

\end{abstract}

\newpage
\section{Introduction}
Matrix functions have wide applications in many fields. They emerge as exponential integrators in differential equations. For square matrices, people usually define the matrix function by using its Jordan canonical form \cite{Golub1,Higham1}. Unfortunately, this kind of method could not be extended to rectangular matrices. In 1972, Hawkins and Ben-Israel \cite{Hawkins1} (or \cite[Chapter 6]{Ben1}) first introduced the generalized matrix functions by using the singular value decomposition (SVD) and compact singular value decomposition (CSVD) for rectangular matrices. It has been recognized that the generalized matrix functions have great uses in data science, matrix optimization problems, Hamiltonian dynamical systems and etc. Recently, Benzi et al. considered the structural properties which are preserved by generalized matrix functions \cite{Arrigo1,Aurentz1,Benzi1}. Noferini \cite{Noferini1}  provided the formula for the Fr${\rm \acute{e}}$chet derivative of a generalized matrix function.

As high-dimension analogues of matrices, a tensor means a hyperdimensional matrix, they are extensions of matrices. The difference is that a matrix entry $a_{ij}$ has two subscripts $i$ and $j$, while a tensor entry $a_{i_1\dots i_m}$ has $m$ subscripts $i_1,\ldots,i_m$. We call $m$ the order of tensor, if the tensor has $m$ subscripts. Let $\mathbb{C}$ be the complex field and $\mathbb{R}$ be the real field. For a positive integer $N$, let $[N]=\{1,2,\ldots, N\}$. We say a tensor is a real tensor if all its entries are in $\mathbb{R}$ and a tensor is a complex tensor if all its entries are in $\mathbb{C}$.\par

Recently, the tensor T-product has been established and proved to be a useful tool in many areas, such as image processing \cite{Khaleel1, Kilmer1, Martin1, Soltani1, Tarzanagh1, Zhou1}, computer vision \cite{Baburaj1, Hao1, Xie1,Yin1}, signal processing \cite{Chan2, Liu1, Long1, Sun1}, low rank tensor recovery and robust tensor PCA \cite{Kong2,Liu1},  data completion and denoising \cite{Ely1, Hu1, Hu2, Liu2,Long1, Madathil1, Madathil2, Qin1, Wang1, Yang1, CZhang1, Zhang1, Zhang2, Zhang3}.
Because of the importance of tensor T-product, Lund \cite{Lund1} gave the definition for tensor functions based on the T-product of third-order F-square tensors which means all the front slices of a tensor is square matrices. The definition of T-function is given by
$$
f(\tens{A}):=\fold(f(\bcirc(\tens{A}))\widehat{E_{1}}^{np\times n}),
$$
where `$\bcirc(\tens{A})$' is the block circulant matrix \cite{Chan1} by the F-square tensor $\tens{A}\in \mathbb{R}^{n\times n \times p}$ and see the detail in Section 2.2.\par
For the Einstein product, both the standard tensor inverse and the generalized tensor inverse theories have been established \cite{Ma1, 2Sun1}, so it is natural to talk about the generalized inverse and other kinds of generalized functions based on the T-product.\par
In this paper, we generalize the tensor T-function from F-square third order tensors to rectangular tensors $\tens{A}\in \mathbb{R}^{m \times n \times p}$ by using tensor singular value decomposition (T-SVD) and tensor compact singular value decomposition (T-CSVD). Kilmer \cite{Kilmer2} gives the tensor singular value decomposition in 2011 (See Fig. \ref{fig1-1}), which gives a new tensor representation and compression idea based on the tensor T-product method especially for third order tensors. The tensor singular value decomposition of tensor $\tens{A}\in \mathbb{C}^{m \times n \times p}$ is given by
\cite{Hao1,Kilmer1, Kilmer2}
$$
\tens{A}=\tens{U}*\tens{S}*\tens{V}^{H},
$$
where $\tens{U}\in \mathbb{C}^{m \times m \times p}$ and $\tens{V}\in \mathbb{C}^{n \times n \times p}$ are unitary tensors and $\tens{S}\in \mathbb{C}^{m \times n \times p}$ is a F-diagonal tensor respectively. The entries in $\tens{S}$ are called the singular tubes of $\tens{A}$.
 \begin{figure}\label{fig1-1}
  \centering
  \includegraphics[width=1\textwidth]{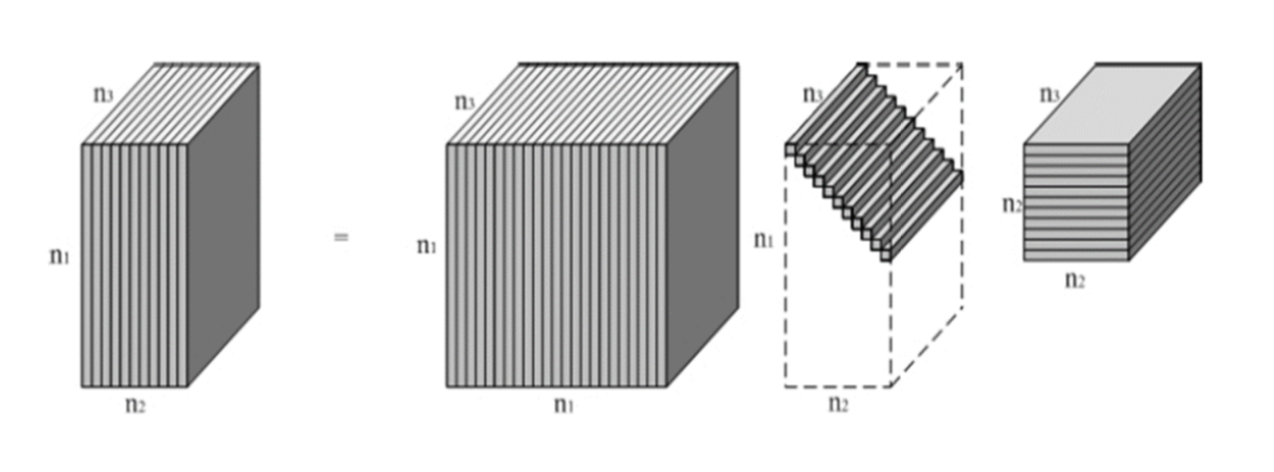}
  \caption{T-SVD of Tensors}
\end{figure}
By using this kind of decomposition, the definition of general tensor functions can be raised.\par
This paper is organized as follows. We make some review of the definition of tensor T-product and some algebraic structure of third order tensors via this kind of product in Preliminaries. Then we recall the definition of T-function given by Lund \cite{Lund1} and some of its properties. In the main part of our paper, we introduce the definition of tensor singular value decomposition and the generalized matrix functions. Then we extend the generalized matrix functions to tensors. Properties are given in the following part. In order to illustrate the generalized tensor functions explicitly, we present the definition of tensor compact singular value decomposition and tensor rank. Orthogonal projection tensors and Cauchy integral formula of tensor functions are provided. As a special case of generalized tensor functions, the expression of the Moore-Penrose inverse and the resolvent of a tensor are also introduced, which have the applications to give the solution of the tensor equation
$$
\tens{A}*\tens{X}*\tens{B}=\tens{D}.
$$
We give the definition of tensor power by using orthogonal projection. Taylor expansion of some explicit tensor function are listed. It should be noticed that, for simplicity of illustration, we only propose results for third order tensors. Results for $n$-th order tensors can also be deduced by our methods, see \cite{Liu3}. By establishing the block tensor multiplication based on the T-product, we summarize many kinds of special tensor structures which are preserved by generalized tensor functions, which is mainly in multiplicative group $\mathbb{G}$, Lie algebra $\mathbb{L}$ and Jordan algebra $\mathbb{J}$. Centrohermitian structure and block circulant structure are also considered. Since isomorphism relations are very important relation in algebra, we further establish the complex-to-real isomorphism which commutes with the generalized tensor function, so complex tensor functions can be isomorphically changed to real tensor functions. In the last part of our paper, we find that the block circulant operator `$\bcirc$' establishes an isomorphism between matrices and tensors, which means the generalized functions of tensors commutes with the operator `$\bcirc$'. Now we can say the generalized matrix function becomes a special case of the generalized tensor function. As an application of this theorem, we defined the F-stochastic tensor structure and proved that this kind of tensor structure is invariant under the generalized tensor functions.

\section{Preliminaries}
\subsection{Notation and index}
   A new concept is proposed for multiplying third-order tensors, based on viewing a tensor as a stack of frontal slices. Suppose two tensors $\tens{A}\in \mathbb{R}^{m\times n \times p}$ and $\tens{B} \in \mathbb{R}^{n\times s \times p}$ and denote their frontal faces respectively as $A^{(k)}\in \mathbb{R}^{m\times n}$ and $B^{(k)}\in \mathbb{R}^{n\times s}$, $k=1,2,\ldots, p$. We also define the operations $\bcirc$, $\unfold$ and $\fold$ as \cite{Hao1,Kilmer1, Kilmer2},
$$
\bcirc(\tens{A}):=
\begin{bmatrix}
 A^{(1)} &  A^{(p)}  &  A^{(p-1)} & \cdots &  A^{(2)}\\

 A^{(2)} &  A^{(1)}  &  A^{(p)} & \cdots &  A^{(3)}\\

\vdots  & \ddots& \ddots & \ddots & \vdots\\

 A^{(p)} &  A^{(p-1)}  &  \ddots & A^{(2)} &  A^{(1)}\\
\end{bmatrix},\
\unfold (\tens{A}):=
\begin{bmatrix}
A^{(1)}\\

A^{(2)}\\

\vdots\\

 A^{(p)}\\
\end{bmatrix},
$$
and $\fold (\unfold(\tens{A})):=\tens{A}$. We can also define the corresponding inverse operation $\bcirc^{-1}: \mathbb{R}^{mp\times np}\rightarrow \mathbb{R}^{m\times n \times p}$ such that $\bcirc^{-1}(\bcirc({\tens{A}}))=\tens{A}$.\par
 \begin{figure}\label{fig1-2}
  \centering
  \includegraphics[width=1\textwidth]{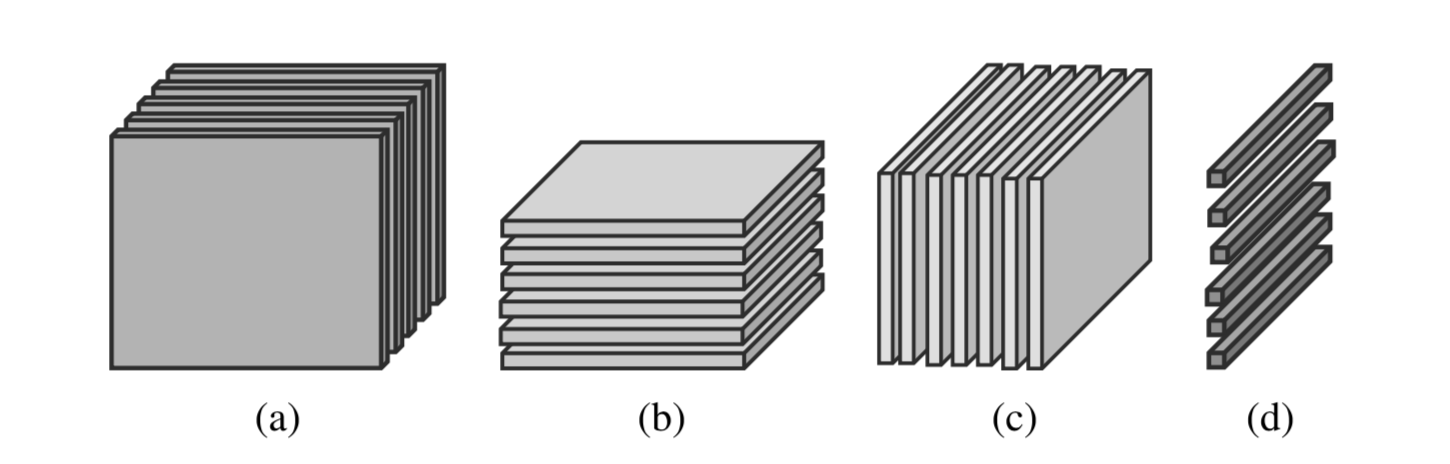}
  \caption{(a) Frontal, (b) horizontal, and (c) lateral slices of a 3rd order tensor. The lateral slices are also referred to as oriented matrices. (d) A lateral slice as a vector of tube fibers.}
\end{figure}

\subsection{The tensor T-Product}
The following definitions and properties are introduced in \cite{Hao1,Kilmer1, Kilmer2}.
\begin{definition}\label{def1-1} {\rm (T-product)}
Let $\tens{A}\in \mathbb{C}^{m\times n \times p}$ and $\tens{B}\in \mathbb{C}^{n\times s \times p}$ be two real tensors. Then the T-product
$\tens{A}*\tens{B}$ is a $m\times s \times p$ real tensor defined by
$$
\tens{A}*\tens{B}:=\fold (\bcirc(\tens{A})\unfold(\tens{B})).
$$
\end{definition}
We introduce definitions of transpose, identity and orthogonal of tensors as follows.
\begin{definition}\label{def1-2} {\rm(Transpose and conjugate transpose)}
If $\tens{A}$ is a third order tensor of size $m\times n\times p $, then the transpose $\tens{A}^{\top}$ is obtained by transposing each of the frontal slices and then reversing the order of transposed frontal slices $2$ through $p$. The conjugate transpose $\tens{A}^{H}$ is obtained by conjugate transposing each of the frontal slices and then reversing the order of transposed frontal slices $2$ through $p$.
\end{definition}
\begin{example}Suppose $\tens{A}\in \mathbb{R}^{3\times 3\times 3}$ is a real tensor whose elements of the frontal slices are given as:
$$
A^{(1)}=
\begin{bmatrix}
1&2&3\\
4&5&6\\
7&8&9\\
\end{bmatrix},\quad
A^{(2)}=
\begin{bmatrix}
2&4&6\\
8&10&12\\
14&16&18\\
\end{bmatrix},\quad
A^{(3)}=\begin{bmatrix}
3&6&9\\
12&15&18\\
21&24&27\\
\end{bmatrix}.
$$
Then by the definition of tensor transpose, the elements of the frontal slices of $\tens{A}^{\top}$ is given as:
$$
A^{(1)\top}=
\begin{bmatrix}
1&4&7\\
2&5&8\\
3&6&9\\
\end{bmatrix},\quad
A^{(2)\top}=
\begin{bmatrix}
3&12&21\\
6&15&24\\
9&18&27\\
\end{bmatrix},\quad
A^{(3)\top}=\begin{bmatrix}
2&8&14\\
4&10&16\\
6&12&18\\
\end{bmatrix}.
$$
\end{example}
\begin{definition}\label{def1-3} {\rm (Identity tensor)}
The $n\times n \times p $ identity tensor $\tens{I}_{nnp}$ is the tensor whose first frontal slice is the $n\times n$ identity matrix, and whose other frontal slices are all zeros.
\end{definition}
It is easy to check that $\tens{A}*\tens{I}_{nnp}=\tens{I}_{mmp}*\tens{A}=\tens{A}$ for $\tens{A}\in \mathbb{R}^{m\times n\times p}$.
\begin{definition}\label{def1-4} {\rm (Orthogonal and unitary tensor)}
An $n\times n\times p$ real-valued tensor $\tens{P}$ is orthogonal if $\tens{P}^{\top}*\tens{P}=\tens{P}*\tens{P}^{\top}=\tens{I}$.  An $n\times n\times p$ complex-valued tensor $\tens{Q}$ is unitary if $\tens{Q}^{H}*\tens{Q}=\tens{Q}*\tens{Q}^{H}=\tens{I}$.
\end{definition}
For a frontal square tensor $\tens{A}$ of size $n\times n \times p$, it has inverse tensor $\tens{B}(=\tens{A}^{-1})$, provided that
$$
\tens{A}*\tens{B}=\tens{I}_{nnp}\ \ and \ \ \tens{B}*\tens{A}=\tens{I}_{nnp}.
$$
It should be noticed that invertible third order tensors of size $n\times n\times p$ forms a group, since the invertibility of tensor $\ten{A}$ is equivalent to the invertibility of the matrix $\bcirc(\tens{A})$, and the set of invertible matrices forms a group. Also, the orthogonal tensors based on the tensor T-product also forms a group, since $\bcirc(\tens{Q})$ is an orthogonal matrix.
\begin{example}Suppose $\tens{A}\in \mathbb{R}^{2\times 2\times 3}$ is a real tensor whose elements of the frontal slices are given as:
$$
A^{(1)}=
\begin{bmatrix}
1&-\frac{1}{3}\\
\frac{1}{3}&1\\
\end{bmatrix},\quad
A^{(2)}=
\begin{bmatrix}
0&-\frac{1}{3}\\
\frac{1}{3}&0\\
\end{bmatrix},\quad
A^{(3)}=
\begin{bmatrix}
0&-\frac{1}{3}\\
\frac{1}{3}&0\\
\end{bmatrix}.
$$
It is easy to verify, under the tensor T-product, the elements of the frontal slices of the inverse tensor $\tens{B}=\tens{A}^{-1}$ is given as:
$$
B^{(1)}=
\begin{bmatrix}
\frac{5}{6}&\frac{1}{6}\\
-\frac{1}{6}&\frac{5}{6}\\
\end{bmatrix},\quad
B^{(2)}=
\begin{bmatrix}
-\frac{1}{6}&\frac{1}{6}\\
-\frac{1}{6}&-\frac{1}{6}\\
\end{bmatrix},\quad
B^{(3)}=
\begin{bmatrix}
-\frac{1}{6}&\frac{1}{6}\\
-\frac{1}{6}&-\frac{1}{6}\\
\end{bmatrix}.
$$
\end{example}
\subsection{Tensor T-Function}
First, we make some recall for the functions of square matrices based on the Jordan canonical form \cite{Golub1,Higham1}. \par
Let $A\in \mathbb{C}^{n\times n}$ be a matrix with spectrum $\lambda(A):=\{\lambda_j\}_{j=1}^N$, where $N\leq n$ and $\lambda_j$ are distinct. Each $m\times m$ Jordan block $J_m(\lambda)$ of an eigenvalue $\lambda$ has the form
$$
J_m(\lambda)=
\begin{bmatrix}
\lambda & 1 &  &  \\

  & \lambda & \ddots  & \\

  & & \ddots &1\\

 & &  & \lambda
\end{bmatrix}\in \mathbb{C}^{m\times m}.
$$
Suppose that $A$ has the Jordan canonical form
$$
A=XJX^{-1}=X {\rm diag}(J_{m_1}(\lambda_{j_1}),\cdots, J_{m_p}(\lambda_{j_p}))X^{-1},
$$
with $p$ blocks of sizes $m_i$ such that $\sum_{i=1}^{p}m_i=n$, and the eigenvalues $\{\lambda_{j_k}\}_{k=1}^{p}\in {\rm spec}(A)$. \par
\begin{definition}\label{def1-5} {\rm (Matrix function)}
Suppose $A\in \mathbb{C}^{n\times n}$ has the Jordan canonical form and the matrix function is defined as
$$
f(A):=X f(J)X^{-1},
$$
where $f(J):={\rm diag}(f(J_{m_1}(\lambda_{j_1})),\cdots, f(J_{m_p}(\lambda_{j_p})))$, and
$$
f(J_{m_i}(\lambda_{j_i})):=
\begin{bmatrix}
f(\lambda_{j_k}) & f'(\lambda_{j_k}) & \frac{f''(\lambda_{j_k})}{2!} & \cdots & \frac{f^{(n_{j_k}-1)}(\lambda_{j_k})}{(n_{j_k}-1)!} \\

0 & f(\lambda_{j_k}) & f'(\lambda_{j_k}) &  \cdots & \vdots \\

\vdots & \ddots & \ddots &\ddots & \frac{f''(\lambda_{j_k})}{2!} \\

\vdots &   & \ddots & \ddots& f'(\lambda_{j_k}) \\

0 & \cdots & \cdots & 0 & f(\lambda_{j_k})\\
\end{bmatrix}\in \mathbb{C}^{m_i\times m_i}.
$$
\end{definition}
There are various matrix function properties throughout the theorems of matrix analysis
which could be found in the excellent monograph \cite{Higham1}.
By using the concept of T-product, the matrix function is generalized to tensors of size $n\times n \times p$. Suppose we have tensors $\tens{A}\in \mathbb{C}^{n\times n\times p}$ and $\tens{B}\in \mathbb{C}^{n\times s\times p}$, then the tensor T-function of $\tens{A}$ is defined by \cite{Lund1}
$$
f(\tens{A})*\tens{B}:=\fold(f(\bcirc(\tens{A}))\cdot \unfold(\tens{B})),
$$
or equivalently
$$
f(\tens{A}):=\fold(f(\bcirc(\tens{A}))\widehat{E_{1}}^{np\times n}),
$$
here $\widehat{E_{1}}^{np\times n}=\hat{e}_k^p \otimes I_{n\times n}$, where $\hat{e}_k^p\in \mathbb{C}^p$ is the vector of all zeros except for the $k$th entry and $I_{n\times n}$ is the identity matrix, `$\otimes$' is the matrix Kronecker product \cite{Horn1}. \par
There is another way to express $\widehat{E_{1}}^{np\times n}$:
$$
\widehat{E_{1}}^{np\times n}=
\begin{bmatrix}
I_{n\times n}\\
0\\
\vdots\\
0
\end{bmatrix}
=
\begin{bmatrix}
1\\
0\\
\vdots\\
0
\end{bmatrix}\otimes I_{n\times n }=\unfold (\tens{I}_{n\times n \times p}).
$$
Note that $f$ on the right-hand side of the equation is merely the matrix function defined above, so the T-function is well-defined.\par
From this definition, we could see that for a tensor $\mathbb{\tens{A}}\in \mathbb{C}^{n\times n\times p}$, $\bcirc(\tens{A})$ is a block circulant matrix of size $np\times np$. The frontal faces of $\tens{A}$ are the block entries of $A\widehat{E_{1}}^{np\times n}$, then $\tens{A}=\fold(A\widehat{E_{1}}^{np\times n})$, where $A=\bcirc(\tens{A})$.\par
By using the definition of matrix function and Jordan canonical form, Miao, Qi and Wei \cite{3Miao1} introduce the tensor similar relationship and propose the T-Jordan canonical form $\tens{J}$ which is an F-upper-bi-diagonal tensor satisfies
$$
\tens{A}=\tens{P}^{-1}*\tens{J}*\tens{P}.
$$
Then the tensor function can be equivalently defined as
$$
f(\tens{A})=\tens{P}^{-1}*f(\tens{J})*\tens{P}.
$$
The `$\bcirc$' operator satisfies the following relations:
\begin{lemma}\label{lem1-3} {\rm \cite{Lund1}}
Suppose we have tensors $\tens{A}\in \mathbb{C}^{n\times n\times p}$ and $\tens{B}\in \mathbb{C}^{n\times s\times p}$. Then \\
{\rm (1)}\ $\bcirc(\tens{A}*\tens{B})=\bcirc(\tens{A})\bcirc(\tens{B})$,\\
{\rm (2)}\ $\bcirc(\tens{A})^j=\bcirc(\tens{A}^j)$, for all $j=0,1,\ldots$,\\
{\rm (3)}\ $(\tens{A}*\tens{B})^{H}=\tens{B}^H *\tens{A}^H$.\\
{\rm (4)}\ $\bcirc(\tens{A}^{\top})=(\bcirc(\tens{A}))^{\top}$,\ $\bcirc(\tens{A}^{H})=(\bcirc(\tens{A}))^{H}$.
\end{lemma}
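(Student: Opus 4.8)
The engine for all four identities is the defining relation $\tens{A}*\tens{B}=\fold(\bcirc(\tens{A})\unfold(\tens{B}))$ together with two structural facts about block circulant matrices with $p$ blocks of size $n\times n$: such a matrix is completely determined by its first block column (which is exactly the $\unfold$ of the underlying tensor), and this family of matrices is closed under matrix multiplication. I would establish (1) first, since it carries the real content, and then obtain (2), (3), (4) from it by essentially formal manipulation.

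For (1) the plan is to show that $\bcirc(\tens{A})\bcirc(\tens{B})$ and $\bcirc(\tens{A}*\tens{B})$ are the same matrix by checking two things. First, $\bcirc(\tens{A})\bcirc(\tens{B})$ is itself block circulant. Second, its first block column agrees with that of $\bcirc(\tens{A}*\tens{B})$. The second point is immediate from the definitions: the first block column of $\bcirc(\tens{B})$ is $\unfold(\tens{B})$, so the first block column of the product is $\bcirc(\tens{A})\unfold(\tens{B})=\unfold(\fold(\bcirc(\tens{A})\unfold(\tens{B})))=\unfold(\tens{A}*\tens{B})$, which is the first block column of $\bcirc(\tens{A}*\tens{B})$. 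Since a block circulant matrix is determined by its first block column, the two coincide once closure is known. I expect the closure statement to be the main obstacle, namely verifying that a product of two block circulant matrices is again block circulant. I would prove it by direct index bookkeeping: writing the $(i,j)$ block of $\bcirc(\tens{A})$ as $A^{((i-j)\bmod p)+1}$ and similarly for $\bcirc(\tens{B})$, the $(i,j)$ block of the product is $\sum_{k}A^{((i-k)\bmod p)+1}B^{((k-j)\bmod p)+1}$, and re-indexing the sum shows it depends only on $(i-j)\bmod p$. Equivalently, one can conjugate by $F_{p}\otimes I_{n}$, under which every such block circulant becomes block diagonal, so products stay block diagonal and transform back to block circulant; this is the standard Fourier-domain viewpoint for the T-product.

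Once (1) holds, (2) follows by induction on $j$: the base case $j=0$ is $\bcirc(\tens{A})^{0}=I_{np}=\bcirc(\tens{I}_{nnp})=\bcirc(\tens{A}^{0})$, using that the identity tensor has $\bcirc$ equal to the $np\times np$ identity, and the inductive step is $\bcirc(\tens{A}^{j+1})=\bcirc(\tens{A}^{j}*\tens{A})=\bcirc(\tens{A}^{j})\bcirc(\tens{A})=\bcirc(\tens{A})^{j}\bcirc(\tens{A})$. For (4) I would compute block entries directly from Definition~\ref{def1-2}: the frontal slices of $\tens{A}^{\top}$ are $A^{(1)\top}$ followed by $A^{(p)\top},A^{(p-1)\top},\dots,A^{(2)\top}$, so the $(i,j)$ block of $\bcirc(\tens{A}^{\top})$ is $(A^{((j-i)\bmod p)+1})^{\top}$, which is exactly the $(i,j)$ block of $\bcirc(\tens{A})^{\top}$, using that $(i-j)\bmod p$ and $(j-i)\bmod p$ sum to $p$ when $i\neq j$; the conjugate version is identical with $\top$ replaced by $H$.

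Finally, (3) is a formal consequence of (1) and (4) together with the injectivity of $\bcirc$, which is guaranteed by the existence of a well-defined inverse $\bcirc^{-1}$. I would compute $\bcirc((\tens{A}*\tens{B})^{H})=(\bcirc(\tens{A})\bcirc(\tens{B}))^{H}=\bcirc(\tens{B})^{H}\bcirc(\tens{A})^{H}=\bcirc(\tens{B}^{H})\bcirc(\tens{A}^{H})=\bcirc(\tens{B}^{H}*\tens{A}^{H})$, where the first and last equalities use (1) and (4), and the middle equalities use the ordinary matrix identity $(MN)^{H}=N^{H}M^{H}$. Cancelling $\bcirc$ on both ends then yields $(\tens{A}*\tens{B})^{H}=\tens{B}^{H}*\tens{A}^{H}$.
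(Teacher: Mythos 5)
Your proof is correct. Note, however, that the paper itself offers no proof of this lemma: it is stated as a citation to Lund's paper on the tensor T-function, so there is no in-paper argument to compare against. Your self-contained derivation is the standard one and is sound: the key content is indeed point (1), and your two-step argument (closure of block circulant matrices under multiplication, plus the observation that such a matrix is determined by its first block column, which for $\bcirc(\tens{B})$ is exactly $\unfold(\tens{B})$) correctly identifies $\bcirc(\tens{A})\bcirc(\tens{B})$ with $\bcirc(\tens{A}*\tens{B})$ via $\bcirc(\tens{A})\unfold(\tens{B})=\unfold(\tens{A}*\tens{B})$. The index check for (4), using that $(i-j)\bmod p$ and $(j-i)\bmod p$ sum to $p$ off the diagonal and that the transpose reverses frontal slices $2$ through $p$, is exactly what is needed, and deducing (3) from (1), (4), the matrix identity $(MN)^{H}=N^{H}M^{H}$, and the injectivity of $\bcirc$ is clean. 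One minor point worth making explicit if you write this up: (1) and its proof hold for arbitrary compatible sizes $m\times n\times p$ and $n\times s\times p$, not just the F-square case stated, and you implicitly use this generality when you apply (1) to $\tens{B}^{H}*\tens{A}^{H}$ in the proof of (3).
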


\section{Generalized Tensor Functions}
\subsection{Generalized tensor function by T-SVD}
The tensor T-product change problems to block circulant matrices which could be block diagonalizable by the fast Fourier transformation \cite{Chan1,Gleich1}. The calculation of T-product and T-SVD can be done fast, easily and stably because of the following reasons. First, the block circulant operator `bcirc' is only related to the structure of data, which can be constructed in a convenient way. Then the Fast Fourier Transformation and its inverse can be implemented stably, quickly and efficiently. Its algorithm has been fully established. After transform the block circulant matrix into block diagonal matrix, singular value decomposition can be done. The algorithm to compute the singular value decomposition and the compact singular value decomposition is stable and fast. So the computation of the T-SVD and its generalized functions can be done fast, stably, and easily.\par
On the other hand, the tensor T-product and T-functions does have applications in many scientific situations. For example, it can be used in conventional computed
tomography. Semerci, Hao, Kilmer and Miller \cite{Semerci1} introduced the tensor-based formulation and used the ADMM algorithm to solve the TNN model.
They give the quadratic approximation to the Poisson log-likelihood function for $k^{th}$ energy bin as a third order tensor, whose $k^{th}$ frontal slice is given by
$$
L_{k}({\textbf x}_{k})=({\textbf A}{\textbf x}_{k}-{\textbf m}_{k})^{\top}\Sigma_k^{-1}({\textbf A}{\textbf x}_{k}-{\textbf m}_{k}),\quad k=1,2,\ldots, p.
$$
where $\Sigma_k$ is treated as the weighting matrix. To minimize the objective function $L_{k}({\textbf x}_{k})$, it comes to solve the above Least Squares problem, or equivalently, to obtain its T-generalized inverse, i,e., a special case of our generalized functions based on the T-product. They used the T-SVD and compute the T-Least Squares solution. Kilmer and Martin \cite{Kilmer2} also gave the definition of the standard inverse of tensors based on the T-product.\par
Since the tensor T-product and tensor T-function could only be defined for F-square tensors, i.e., tensors of size $n\times n\times p$. In this section, we generalize the concept of tensor T-functions to tensors of size $mn\times n\times p$, that is F-rectangular tensors. In order to do this, we first introduce the T-SVD decomposition of a F-rectangular tensor.
\begin{lemma}\label{lem1-4} {\rm \cite{Hao1,Kilmer1, Kilmer2}}
Let $\tens{A}$ be an $m \times n \times p$ real-valued tensor. Then $\tens{A}$ can be factorized as
$$
\tens{A}=\tens{U} * \tens{S} * \tens{V}^{H},
$$
where $\tens{U}$, $\tens{V}$ are unitary $m \times m \times p$ and $n \times n \times p$ tensor respectively, and $\tens{S}$ is a $m \times n \times p$ F-diagonal tensor.
\end{lemma}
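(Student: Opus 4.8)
The plan is to reduce the tensor factorization to the ordinary matrix SVD by passing through the block circulant representation, using the fact --- recalled in the discussion above --- that the discrete Fourier transform block-diagonalizes every block circulant matrix. Write $F_p$ for the $p\times p$ (normalized) DFT matrix. By Lemma~\ref{lem1-3} the operator $\bcirc$ turns the T-product into ordinary matrix multiplication, so it suffices to produce a factorization of $\bcirc(\tens{A})$ of the form $\bcirc(\tens{U})\,\bcirc(\tens{S})\,\bcirc(\tens{V})^{H}$ in which the three outer factors are themselves block circulant and carry the desired structure. First I would compute $(F_p\otimes I_m)\,\bcirc(\tens{A})\,(F_p^{H}\otimes I_n)=\mathrm{blkdiag}(\widehat{A}^{(1)},\dots,\widehat{A}^{(p)})$, a block-diagonal matrix whose blocks $\widehat{A}^{(k)}\in\mathbb{C}^{m\times n}$ are the frontal slices of $\tens{A}$ after a Fourier transform along the tubal dimension.

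Next I would take an ordinary matrix SVD of each Fourier-domain slice, $\widehat{A}^{(k)}=\widehat{U}^{(k)}\widehat{S}^{(k)}(\widehat{V}^{(k)})^{H}$, with $\widehat{U}^{(k)}\in\mathbb{C}^{m\times m}$ and $\widehat{V}^{(k)}\in\mathbb{C}^{n\times n}$ unitary and $\widehat{S}^{(k)}\in\mathbb{C}^{m\times n}$ rectangular-diagonal with nonnegative diagonal. Assembling the factors blockwise and conjugating back, I would \emph{define} $\tens{U},\tens{S},\tens{V}$ to be the tensors determined by $\bcirc(\tens{U})=(F_p^{H}\otimes I_m)\,\mathrm{blkdiag}(\widehat{U}^{(k)})\,(F_p\otimes I_m)$ and analogously for $\tens{S}$ and $\tens{V}$. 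Each right-hand side is block circulant, being a block-diagonal matrix conjugated by $F_p$, so these formulas genuinely specify tensors via $\bcirc^{-1}$; this well-definedness is the point that makes the reduction legitimate.

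It then remains to verify the three claimed properties, each of which transfers from the matrix level through the $\bcirc$/Fourier dictionary. The factorization $\tens{A}=\tens{U}*\tens{S}*\tens{V}^{H}$ follows by reading the blockwise identities $\widehat{A}^{(k)}=\widehat{U}^{(k)}\widehat{S}^{(k)}(\widehat{V}^{(k)})^{H}$ backward, using $\bcirc(\tens{X}*\tens{Y})=\bcirc(\tens{X})\bcirc(\tens{Y})$ and $\bcirc(\tens{V}^{H})=\bcirc(\tens{V})^{H}$ from Lemma~\ref{lem1-3}, and then applying $\bcirc^{-1}$. Unitarity holds because $\bcirc(\tens{U}^{H})\bcirc(\tens{U})$ block-diagonalizes to $\mathrm{blkdiag}((\widehat{U}^{(k)})^{H}\widehat{U}^{(k)})=\mathrm{blkdiag}(I_m)=\bcirc(\tens{I})$, i.e. $\tens{U}^{H}*\tens{U}=\tens{I}$ in the sense of Definition~\ref{def1-4}, and likewise for $\tens{V}$. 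For F-diagonality I would note that each frontal slice of $\tens{S}$ is an inverse-DFT (hence linear) combination of the diagonal matrices $\widehat{S}^{(k)}$ and is therefore diagonal, so every frontal slice of $\tens{S}$ is diagonal, which is exactly what F-diagonal means.

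The one place that is not mere bookkeeping is reconciling the complex Fourier-domain construction with the real input: because $\tens{A}$ is real its Fourier slices satisfy the conjugate symmetry $\widehat{A}^{(k)}=\overline{\widehat{A}^{(p+2-k)}}$, and if one wants the outer factors to come out real orthogonal (rather than merely complex unitary) one must choose the SVDs of each conjugate-paired pair of slices to be complex conjugates of one another and take a real SVD on the self-conjugate slices ($k=1$, together with $k=p/2+1$ when $p$ is even). I expect this symmetry-matching, compounded by the nonuniqueness of the SVD when a slice has repeated singular values, to be the only genuinely tensor-specific obstacle; for the complex-unitary statement asserted here it can be bypassed entirely, since one may simply take an arbitrary slicewise SVD.
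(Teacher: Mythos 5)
Your proof is correct and is essentially the canonical argument: the paper itself does not prove Lemma~\ref{lem1-4} (it is quoted from Kilmer--Martin and related works), but the route you take --- block-diagonalize $\bcirc(\tens{A})$ by $F_p\otimes I$, take a slicewise matrix SVD in the Fourier domain, and pull the three block-diagonal factors back through $\bcirc^{-1}$ --- is exactly the structure the paper assumes in equations (1)--(3) of Theorem~\ref{the1-1}. Your closing remark on conjugate symmetry correctly identifies the only delicate point (realness of the factors), and correctly observes that it is not needed for the complex-unitary statement actually asserted.
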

In matrix theories, the generalized matrix function of an $m\times n$ matrix has been introduced by using the matrix singular value decomposition (SVD) \cite{Golub1} and the Moore-Penrose inverse of the matrix \cite{Ben1}.

Let $A\in \mathbb{C}^{m\times n }$ and the singular value decomposition be
$$
A=U\Sigma V^{H}.
$$
Let $r$ be the rank of $A$. Consider the matrices $U_r$ and $V_r$ formed with the first $r$ columns of $U$ and $V$, and let $\Sigma_r$ be the leading $r\times r$ principal submatrix of $\Sigma$ whose diagonal entries are $\sigma_1\geq \sigma_2\geq \cdots \geq \sigma_r>0$. Then we have the compact SVD,
$$
A=U_r \Sigma_r V_r^H.
$$
Hawkins and Ben-Israel \cite{Hawkins1} (or \cite[Chapter 6]{Ben1}) present the spectral theory of rectangular matrices by the SVD.
Let $f:\mathbb{R}\rightarrow \mathbb{R}$ be a scalar function such that $f(\sigma_i)$ is defined for all $i=1,2,\ldots, r$.

Define the generalized matrix function induced by $f$ as
$$
f^{\Diamond}(A)=U_r f(\Sigma_r) V_r^H,
$$
where
$$
f(\Sigma_r)=
\begin{bmatrix}
 f(\sigma_1) &  &  &  \\

  &  f(\sigma_2) &   & \\

  & & \ddots &\\

 & &  &  f(\sigma_r)
\end{bmatrix}.
$$
The induced function $f^{\Diamond}(A)$ reduces to the standard matrix function $f(A)$ whenever $A$ is Hermitian positive definite, or when $A$ is Hermitian positive semi-definite and $f$ satisfies $f(0)=0$.
\begin{lemma}\label{lem1-5} (\cite{Arrigo1}, Proposition $8$) Let $A\in \mathbb{C}^{m\times n}$ be a matrix of rank $r$. Let $f:\mathbb{C}\rightarrow \mathbb{C}$ be a scalar function, and let $f^{\Diamond}: \mathbb{C}^{m\times n}\rightarrow \mathbb{C}^{m\times n}$ be the induced generalized matrix function. Then \\
{\rm (1)}\ $[f^{\Diamond}(A)]^{H}=f^{\Diamond}(A^H)$,\\
{\rm (2)}\ Let $X\in \mathbb{C}^{m\times m}$ and $Y\in \mathbb{C}^{n\times n}$ be two unitary matrices, then $f^{\Diamond}(XAY)=X[f^{\Diamond}(A)]Y$,\\
{\rm (3)}\ If $A=A_1\oplus A_2 \oplus \cdots \oplus A_k$, then $f^{\Diamond}(A)=f^{\Diamond}(A_1)\oplus f^{\Diamond}(A_2) \oplus \cdots \oplus  f^{\Diamond}(A_k) $, where `$\oplus$' is the direct sum of matrices \cite{Horn1}.\\
{\rm (4)}\ $f^{\Diamond}(A)= f\left(\sqrt{A A^H}\right)\left(\sqrt{A A^H}\right)^{\dag} A=A\left(\sqrt{A^H A}\right)^{\dag} f\left(\sqrt{A^H A}\right)$, where $M^{\dag}$ is the Moore-Penrose inverse of $M$ \cite{Ben1}, and $\sqrt{A^H A}$ is the square root of $A^H A$ {\rm \cite[Chapter 6]{Ben1}}.\\
\end{lemma}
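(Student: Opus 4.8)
The plan is to derive all four identities from the single explicit formula $f^{\Diamond}(A) = U_r f(\Sigma_r) V_r^H$ together with the bookkeeping of the (compact) SVD; the recurring device is to exhibit a compact SVD of the transformed matrix and then read off its generalized function directly. Throughout I would rely on the fact that $f^{\Diamond}$ is well defined, i.e. independent of the particular SVD chosen, since the singular values $\sigma_1 \geq \cdots \geq \sigma_r > 0$ and the associated column spaces are intrinsic to $A$; this well-definedness is what lets me substitute any convenient SVD in the arguments below.

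For (1), I would take the conjugate transpose of the definition to obtain $[f^{\Diamond}(A)]^H = V_r\,\overline{f(\Sigma_r)}\,U_r^H$. Because the $\sigma_i$ are real and positive, $A^H = V_r \Sigma_r U_r^H$ is itself a compact SVD of $A^H$ with the same singular values, so $f^{\Diamond}(A^H) = V_r f(\Sigma_r) U_r^H$; the identity then reduces to the observation that $f$ takes real values on the $\sigma_i$, which is the one point that needs care when the codomain of $f$ is written as $\mathbb{C}$. For (2), I would note that if $X, Y$ are unitary then $XU_r$ and $Y^H V_r$ still have orthonormal columns, whence $XAY = (XU_r)\Sigma_r(Y^H V_r)^H$ is a compact SVD of $XAY$ with unchanged singular values; applying the formula and factoring $X$ out on the left and $Y$ out on the right yields $X f^{\Diamond}(A) Y$. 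For (3), I would assemble a compact SVD of $A_1 \oplus \cdots \oplus A_k$ from compact SVDs of the individual blocks, noting that the unitary factors inherit the block-diagonal structure and the singular values of $A$ are the pooled singular values of the blocks; since $f$ acts coordinatewise on singular values and the outcome is insensitive to their ordering (again by well-definedness), $f^{\Diamond}$ distributes across the direct sum.

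The computational heart is (4). I would compute $AA^H = U_r \Sigma_r^2 U_r^H$, so that the Hermitian positive semidefinite square root is $\sqrt{AA^H} = U_r \Sigma_r U_r^H$ and its Moore-Penrose inverse is $(\sqrt{AA^H})^{\dag} = U_r \Sigma_r^{-1} U_r^H$. Applying the ordinary (Hermitian) matrix function $f$ to $\sqrt{AA^H}$ produces $U_r f(\Sigma_r) U_r^H$ together with a contribution of $f(0)$ supported on the orthogonal complement of the range of $U_r$; multiplying by $(\sqrt{AA^H})^{\dag}$ annihilates that complementary piece, and a final multiplication by $A = U_r \Sigma_r V_r^H$ collapses $f(\Sigma_r)\Sigma_r^{-1}\Sigma_r$ to $f(\Sigma_r)$, giving $U_r f(\Sigma_r) V_r^H = f^{\Diamond}(A)$. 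The right-hand expression follows symmetrically from $A^H A = V_r \Sigma_r^2 V_r^H$.

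I expect the main obstacle to lie in the careful treatment of rank deficiency in (4): one must verify that the $f(0)$-eigenspace of $\sqrt{AA^H}$ is exactly the part removed by the pseudoinverse, so that the value of $f^{\Diamond}(A)$ does not depend on $f(0)$, and one must track the Hermitian square root and the Moore-Penrose inverse precisely on the null space. The well-definedness of $f^{\Diamond}$ is the other technical point underpinning (2) and (3), and I would either establish it first or invoke it explicitly before running the arguments above.
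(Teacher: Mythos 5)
The paper does not prove this lemma at all: it is quoted verbatim from Arrigo--Benzi--Fenu (Proposition 8 of \cite{Arrigo1}) and used as a black box, so there is no in-paper argument to compare yours against. Judged on its own, your proof is correct and is essentially the standard derivation one finds in the cited source: every part is obtained by exhibiting a compact SVD of the transformed matrix and reading off $U_r f(\Sigma_r)V_r^H$, and your computation in (4) correctly shows that the $f(0)$-component of $f(\sqrt{AA^H})$ living on $\mathrm{Ran}(U_r)^{\perp}$ is killed by $(\sqrt{AA^H})^{\dag}$, so the identity is independent of $f(0)$. Two points you flag are worth keeping explicit if this were written out in full: (i) part (1) as literally stated needs $f(\sigma_i)\in\mathbb{R}$ (otherwise the correct identity is $[f^{\Diamond}(A)]^H=\overline{f}^{\Diamond}(A^H)$), a hypothesis the paper's transcription with $f:\mathbb{C}\rightarrow\mathbb{C}$ silently drops; and (ii) the well-definedness of $f^{\Diamond}$ under non-uniqueness of the SVD (block-unitary freedom in repeated singular values, and the reordering needed in (3)) is genuinely used in (2) and (3) and should be proved or cited rather than merely invoked. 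Neither is a gap in your reasoning so much as a dependency you have correctly identified.
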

Also we introduce these basic results without proof.
\begin{corollary}\label{cor1-1}
Let $A\in \mathbb{C}^{m\times n}$ be a matrix of rank $r$. Let $f:\mathbb{C}\rightarrow \mathbb{C}$ be a scalar function, and let $f^{\Diamond}: \mathbb{C}^{m\times n}\rightarrow \mathbb{C}^{m\times n}$ be the induced generalized matrix function. Then\\
{\rm (1)}\ $[f^{\Diamond}(A)]^{H}=f^{\Diamond}(A^{H})$,\\
{\rm (2)}\ $\overline{f^{\Diamond}(A)}=f^{\Diamond}(\overline{A})$.
\end{corollary}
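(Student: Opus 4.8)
The plan is to read off both identities directly from the compact singular value decomposition that defines $f^{\Diamond}$, namely $A=U_r\Sigma_r V_r^H$ and $f^{\Diamond}(A)=U_r f(\Sigma_r)V_r^H$, where $\Sigma_r=\mathrm{diag}(\sigma_1,\ldots,\sigma_r)$ is real with $\sigma_1\ge\cdots\ge\sigma_r>0$. Part (1) is nothing new: it is exactly Lemma \ref{lem1-5}(1), so I would simply cite it, or re-derive it in one line by noting that $A^H=V_r\Sigma_r U_r^H$ is a compact SVD of $A^H$ with the same singular factor $\Sigma_r$, whence $f^{\Diamond}(A^H)=V_r f(\Sigma_r)U_r^H=[U_r f(\Sigma_r)V_r^H]^H=[f^{\Diamond}(A)]^H$, using that $f(\Sigma_r)$ is a real diagonal matrix. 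The genuine content is therefore part (2), which I would prove by the same \emph{conjugate-the-SVD} device.

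For part (2), first I would take the entrywise complex conjugate of the compact SVD of $A$ to obtain $\overline{A}=\overline{U_r}\,\overline{\Sigma_r}\,\overline{V_r^H}=\overline{U_r}\,\Sigma_r\,(\overline{V_r})^H$, where $\overline{\Sigma_r}=\Sigma_r$ because $\Sigma_r$ is real and $\overline{V_r^H}=(\overline{V_r})^H$. The key verification is that this is again a valid compact SVD: entrywise conjugation preserves the isometry relations, since $(\overline{U_r})^H\overline{U_r}=\overline{U_r^H U_r}=\overline{I}=I$ and likewise for $\overline{V_r}$, while the diagonal factor $\Sigma_r$ retains its real, positive, ordered entries. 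Hence $\overline{A}$ and $A$ share the same singular values, and by the well-definedness of the induced generalized matrix function I may evaluate $f^{\Diamond}(\overline{A})=\overline{U_r}\,f(\Sigma_r)\,(\overline{V_r})^H$. Comparing this with $\overline{f^{\Diamond}(A)}=\overline{U_r}\,\overline{f(\Sigma_r)}\,(\overline{V_r})^H$ then finishes the argument as soon as $\overline{f(\Sigma_r)}=f(\Sigma_r)$.

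The one point requiring care, and the main obstacle, is precisely this last reduction: the equality $\overline{f(\Sigma_r)}=f(\Sigma_r)$ holds because the singular values $\sigma_i$ are real and positive and $f$ is real-valued there, so $f(\Sigma_r)$ is a real diagonal matrix; this is also the fact that makes part (1) go through, where $[f(\Sigma_r)]^H=f(\Sigma_r)$. I would therefore state explicitly that the proof uses only the reality of $\Sigma_r$ and of $f$ on $\mathbb{R}^+$, together with the two structural facts that entrywise conjugation commutes with Hermitian transposition, $\overline{M^H}=(\overline{M})^H$, and preserves orthonormality of columns. A secondary subtlety worth flagging is that the compact SVD is not unique when singular values coincide, so rather than matching one particular factorization I would lean on the already-established well-definedness of $f^{\Diamond}$, which guarantees that $f^{\Diamond}(\overline{A})$ is computed from the conjugated factors without ambiguity.
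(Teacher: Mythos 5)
The paper introduces Corollary~\ref{cor1-1} explicitly ``without proof,'' so there is no argument of the authors' to compare against; your proof is correct and is the natural one. Part (1) is, as you say, just Lemma~\ref{lem1-5}(1) restated, and your one-line rederivation via $A^{H}=V_r\Sigma_r U_r^{H}$ is fine. For part (2), conjugating the compact SVD and checking that $\overline{U_r},\overline{V_r}$ still have orthonormal columns while $\Sigma_r$ is unchanged is exactly the right device, and your appeal to the well-definedness of $f^{\Diamond}$ (rather than to a particular factorization) correctly handles the non-uniqueness of the SVD when singular values coincide. You also put your finger on the only substantive hypothesis: both identities reduce to $\overline{f(\Sigma_r)}=f(\Sigma_r)$, i.e.\ to $f$ being real-valued on the positive singular values. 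The corollary as stated allows $f:\mathbb{C}\rightarrow\mathbb{C}$, and for a genuinely complex-valued $f$ both (1) and (2) fail --- one only gets $\overline{f^{\Diamond}(A)}=\bar f^{\Diamond}(\overline{A})$ with $\bar f(z):=\overline{f(\overline{z})}$ --- but your reading is consistent with Remark~\ref{rem1-1}, where the authors restrict $f$ to a real odd function, so flagging this as an explicit assumption is the correct resolution rather than a gap.
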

Now, we  use Lemma \ref{lem1-5} to establish the generalized tensor function for F-rectangular tensors of size $m \times n \times p$.
\begin{theorem}\label{the1-1} {\rm (Generalized tensor function)}
Suppose $\tens{A}\in \mathbb{C}^{m \times n \times p}$ is a third order tensor, $\tens{A}$ has the T-SVD decomposition
$$
\tens{A}=\tens{U} * \tens{S} * \tens{V}^{H},
$$
where $\tens{U}$, $\tens{V}$ are unitary $m \times m \times p$ and $n \times n \times p$ tensors respectively, and $\tens{S}$ is an $m \times n \times p$ F-diagonal tensor, which can be factorized as follows:
\begin{equation}
\bcirc
(\tens{U})=(F_{p}\otimes I_{m} )
\begin{bmatrix}
 U_1 &  &  &  \\

  & U_2 &   & \\

  & & \ddots &\\

 & &  & U_{p}
\end{bmatrix}
(F_{p}^{H}\otimes I_{m} ),\  U_i\in \mathbb{C}^{m\times m}, \ i=1,2, \ldots ,p.
\end{equation}
\begin{equation}
\bcirc
(\tens{S})=(F_{p}\otimes I_{m} )
\begin{bmatrix}
 \Sigma_1 &  &  &  \\

  & \Sigma_2 &   & \\

  & & \ddots &\\

 & &  & \Sigma_{p}
\end{bmatrix}
(F_{p}^{H}\otimes I_{n} ),\  \Sigma_i\in \mathbb{R}^{m\times n}, \ i=1,2, \ldots ,p.
\end{equation}
\begin{equation}
\bcirc
(\tens{V}^{H})=(F_{p}\otimes I_{n} )
\begin{bmatrix}
V_1^{H} &  &  &  \\

  & V_2^{H} &   & \\

  & & \ddots &\\

 & &  & V_{p}^{H}
\end{bmatrix}
(F_{p}^{H}\otimes I_{n} ) ,\  V^{H}_i\in \mathbb{C}^{n\times n}, \ i=1,2, \ldots ,p,
\end{equation}
where $F_{n}$ is the discrete Fourier matrix of size $n\times n$, which is defined as \cite{Chan1}
$$
F_{n\times n}=\frac{1}{\sqrt{n}}
\begin{bmatrix}
 1& 1 &1  &1 & \cdots & 1 \\

 1& \omega & \omega^2 &\omega^3 &\cdots &\omega^{n-1}  \\

 1& \omega^2 & \omega^4 &\omega^6 & \cdots &\omega^{2(n-1)}  \\

 1& \omega^3 & \omega^6 &\omega^9 & \cdots &\omega^{3(n-1)}  \\

 \vdots& \vdots  & \vdots  & \vdots & \ddots&  \vdots \\

1 & \omega^{n-1}  & \omega^{2(n-1)}  &\omega^{3(n-1)} &\cdots & \omega^{(n-1)(n-1)} \\
\end{bmatrix},
$$
where $\omega=e^{-2\pi {\textbf i}/n}$ is the primitive $n$-th root of unity in which ${\textbf i}^2=-1$. $F_{p}^H$ is the conjugate transpose of $F_{p}$.\par
If $f: \mathbb{C}\rightarrow \mathbb{C}$ is a scalar function, then the induced function $f^{\Diamond}: \mathbb{C}^{m \times n \times p} \rightarrow \mathbb{C}^{m \times n \times p}$ could be defined as:
$$
f^{\Diamond}(\tens{A})=\tens{U} * \hat{f}(\tens{S}) * \tens{V}^{H},
$$
where, $\hat{f}(\tens{S})$ is given by
\begin{equation}
\hat{f}(\tens{S})=\bcirc ^{-1} \left(
(F_{p}\otimes I_{m} )
\begin{bmatrix}
 \tilde{f}(\Sigma_1) &  &  &  \\

  &   \tilde{f}(\Sigma_2) &   & \\

  & & \ddots &\\

 & &  &   \tilde{f}(\Sigma_{p})
\end{bmatrix}
(F_{p}^{H}\otimes I_{n} ) \right),
\end{equation}
and
\begin{equation}
\tilde{f}(\Sigma_i)= f\left(\sqrt{\Sigma_i \Sigma_i^H}\right)\left(\sqrt{\Sigma_i \Sigma_i^H}\right)^{\dag} \Sigma_i=\Sigma_i \left(\sqrt{\Sigma_i^H \Sigma_i}\right)^{\dag} f\left(\sqrt{\Sigma_i^H \Sigma_i}\right),\ i=1,2,\ldots, p.
\end{equation}
\end{theorem}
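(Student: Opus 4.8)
The plan is to recognize the three displayed factorizations of $\bcirc(\tens{U})$, $\bcirc(\tens{S})$ and $\bcirc(\tens{V}^{H})$ as instances of the standard block-diagonalization of block circulant matrices by the discrete Fourier transform, and then to verify that the object $f^{\Diamond}(\tens{A})$ so defined is a genuine, well-defined tensor by reducing everything to a slice-wise application of the generalized matrix function of Lemma \ref{lem1-5}. First I would recall that for any tensor $\tens{X}\in\mathbb{C}^{a\times b\times p}$ the matrix $\bcirc(\tens{X})$ is block-diagonalized by the Fourier matrix, i.e. $(F_{p}^{H}\otimes I_{a})\,\bcirc(\tens{X})\,(F_{p}\otimes I_{b})$ is block diagonal with frontal blocks $X_{1},\ldots,X_{p}$, the slices of $\tens{X}$ in the Fourier domain \cite{Chan1,Gleich1}. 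Applying this to $\tens{U}$, $\tens{S}$ and $\tens{V}^{H}$ produces exactly the three factorizations in the statement, with blocks $U_{i}$, $\Sigma_{i}$ and $V_{i}^{H}$; conversely, every matrix of the form $(F_{p}\otimes I_{a})\,\mathrm{diag}(M_{1},\ldots,M_{p})\,(F_{p}^{H}\otimes I_{b})$ is block circulant, so the operator $\bcirc^{-1}$ appearing in the definition of $\hat{f}(\tens{S})$ is meaningful and returns a genuine tensor.

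Next I would use Lemma \ref{lem1-3}(1) to pass from the T-SVD $\tens{A}=\tens{U}*\tens{S}*\tens{V}^{H}$ to the matrix identity $\bcirc(\tens{A})=\bcirc(\tens{U})\,\bcirc(\tens{S})\,\bcirc(\tens{V}^{H})$. Transforming both sides to the Fourier domain and inserting the three factorizations, the block-diagonal pieces multiply block by block, so the $i$-th frontal block of $\tens{A}$ factors as $A_{i}=U_{i}\Sigma_{i}V_{i}^{H}$. Because $\tens{U}$ and $\tens{V}$ are unitary tensors, $\bcirc(\tens{U})$ and $\bcirc(\tens{V})$ are unitary matrices, and conjugation by the unitary matrix $F_{p}\otimes I$ forces each $U_{i}$ and $V_{i}$ to be unitary; because $\tens{S}$ is F-diagonal, every frontal slice of $\tens{S}$ is diagonal and every Fourier combination of diagonal matrices is again diagonal, so each $\Sigma_{i}$ is a real nonnegative diagonal matrix. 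Hence $A_{i}=U_{i}\Sigma_{i}V_{i}^{H}$ is an honest SVD of the $i$-th Fourier slice of $\tens{A}$.

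It then remains to identify $\tens{U}*\hat{f}(\tens{S})*\tens{V}^{H}$ with the slice-wise generalized matrix function. The two equivalent forms in Lemma \ref{lem1-5}(4) show that the block $\tilde{f}(\Sigma_{i})$ is exactly $f^{\Diamond}(\Sigma_{i})$, and Lemma \ref{lem1-5}(2) (unitary invariance) then gives that the $i$-th Fourier block of $\tens{U}*\hat{f}(\tens{S})*\tens{V}^{H}$ equals $U_{i}\,f^{\Diamond}(\Sigma_{i})\,V_{i}^{H}=f^{\Diamond}(U_{i}\Sigma_{i}V_{i}^{H})=f^{\Diamond}(A_{i})$. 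Thus $f^{\Diamond}(\tens{A})$ is, in the Fourier domain, nothing but the generalized matrix function applied to each frontal block of $\tens{A}$ and then transported back by $\bcirc^{-1}$.

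The step I expect to be the main obstacle is well-definedness: the T-SVD of $\tens{A}$ is not unique, so I must confirm that $f^{\Diamond}(\tens{A})$ does not depend on the chosen factorization. This follows from the previous paragraph, since each Fourier block of $f^{\Diamond}(\tens{A})$ equals $f^{\Diamond}(A_{i})$ and the generalized matrix function $f^{\Diamond}(A_{i})$ depends only on the matrix $A_{i}$, not on any particular SVD of it, by the spectral theory of Hawkins and Ben-Israel underlying Lemma \ref{lem1-5}; as the Fourier blocks $A_{i}$ are determined by $\tens{A}$ alone, so is $f^{\Diamond}(\tens{A})$. A secondary point I would record is that the reality and diagonality of the $\Sigma_{i}$ are automatically supplied by the usual construction of the T-SVD, which proceeds by taking the matrix SVD of each Fourier slice.
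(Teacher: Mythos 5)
Your proposal is correct and follows essentially the same route as the paper: apply $\bcirc$ to the T-SVD, block-diagonalize in the Fourier domain, apply the generalized matrix function machinery of Lemma \ref{lem1-5} block by block, and return via $\bcirc^{-1}$. You in fact go further than the paper's own two-step verification by explicitly identifying each Fourier block $\tilde{f}(\Sigma_i)$ with $f^{\Diamond}(\Sigma_i)$ and by supplying the well-definedness argument (independence of the chosen T-SVD), which the paper omits; just note that the realness and nonnegativity of the $\Sigma_i$ come from the construction of the T-SVD via slice-wise matrix SVDs rather than from F-diagonality alone, as your closing remark correctly acknowledges.
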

\begin{proof}
Since tensor $\tens{A}$ has the T-SVD decomposition \cite{Kilmer1, Kilmer2}
$$
\tens{A}=\tens{U} * \tens{S} * \tens{V}^{H},
$$
taking ``$\bcirc$" on both sides of the equation
$$
\begin{aligned}
\bcirc(\tens{A})&=\bcirc(\tens{U} * \tens{S} * \tens{V}^{H})\\
&=\bcirc(\tens{U})\cdot \bcirc(\tens{S}) \cdot \bcirc(\tens{V}^{H}),
\end{aligned}
$$
where $\bcirc(\tens{U})$ and $\bcirc(\tens{V}^{H})$ are unitary matrices of order $mp\times mp$ and $np\times np$ respectively, and
$$
\bcirc(\tens{S})=
(F_{p}\otimes I_{m} )
\begin{bmatrix}
 \Sigma_1 &  &  &  \\

  & \Sigma_2 &   & \\

  & & \ddots &\\

 & &  & \Sigma_{p}
\end{bmatrix}
(F_{p}^{H}\otimes I_{n} ),
$$
is a block circulant matrix in $\mathbb{C}^{mp\times np}$.
The induced function on $\tens{S}$ could be defined by
$$
\hat{f}(\tens{S})=\bcirc ^{-1} \left(
(F_{p}\otimes I_{m} )
\begin{bmatrix}
 \tilde{f}(\Sigma_1) &  &  &  \\

  &   \tilde{f}(\Sigma_2) &   & \\

  & & \ddots &\\

 & &  &   \tilde{f}(\Sigma_{p})
\end{bmatrix}
(F_{p}^{H}\otimes I_{n} ) \right),
$$
then we define
$$
\bcirc(f^{\Diamond}(\tens{A}))=\bcirc(\tens{U})\cdot\bcirc(\hat{f}(\tens{S}))\cdot\bcirc(\tens{V}^{H}).
$$
Taking `$\bcirc^{-1}$' on both sides of the equation, it turns out
$$
f^{\Diamond}(\tens{A})=\tens{U}*\hat{f}(\tens{S})*\tens{V}^{H}.
$$
\end{proof}
\begin{remark}\label{rem1-1} An important observation is that the scalar function $f$ could be assumed to be an odd function, since $f$ is only defined for non-negative numbers, we can complete it to an odd function by setting $f(0)=0$ and $f(x)=-f(-x)$ for all $x<0$ \cite{Benzi1}.
\end{remark}
\begin{corollary}\label{cor1-2}
Suppose $\tens{A}\in \mathbb{C}^{m \times n \times p}$ is a third order tensor. Let $f: \mathbb{C}\rightarrow \mathbb{C}$ be a scalar function and let $f^{\Diamond}: \mathbb{C}^{m \times n \times p}\rightarrow \mathbb{C}^{m \times n \times p}$ be the corresponding generalized function of third order tensors. Then \\
{\rm (1)} $ [f^{\Diamond}(\tens{A})]^{\top}=f^{\Diamond}(\tens{A}^{\top})$.\\
{\rm (2)} Let $\tens{P} \in \mathbb{R}^{m \times m \times p}$
and $\tens{Q} \in \mathbb{R}^{n \times n \times p}$ be two orthogonal tensors, then $f^{\Diamond}(\tens{P} *\tens{A}* \tens{Q})=\tens{P}*f^{\Diamond}(\tens{A})* \tens{Q}$.\\
\end{corollary}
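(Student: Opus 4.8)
The plan is to reduce both identities to the matrix level by pushing everything through the block circulant operator $\bcirc$, which is invertible and hence injective, and then to invoke the matrix-level results of Lemma \ref{lem1-5} and Corollary \ref{cor1-1}. The engine of the whole argument is a single commutation relation, which I would isolate as the main lemma: for every $\tens{A}\in\mathbb{C}^{m\times n\times p}$,
$$
\bcirc\bigl(f^{\Diamond}(\tens{A})\bigr)=f^{\Diamond}\bigl(\bcirc(\tens{A})\bigr),
$$
where on the left $f^{\Diamond}$ is the generalized tensor function of Theorem \ref{the1-1} and on the right it is the generalized matrix function applied to the $mp\times np$ block circulant matrix $\bcirc(\tens{A})$. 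Establishing this relation is the heart of the proof and the step I expect to cost the most care.

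To prove the commutation relation I would start from the Fourier factorization used in Theorem \ref{the1-1}: writing $\bcirc(\tens{A})=(F_p\otimes I_m)\,D\,(F_p^H\otimes I_n)$ with $D=\mathrm{diag}(A_1,\dots,A_p)$ and $A_i=U_i\Sigma_i V_i^H$, the factors $(F_p\otimes I_m)$ and $(F_p^H\otimes I_n)$ are unitary, so Lemma \ref{lem1-5}(2) gives $f^{\Diamond}(\bcirc(\tens{A}))=(F_p\otimes I_m)\,f^{\Diamond}(D)\,(F_p^H\otimes I_n)$. The direct-sum property Lemma \ref{lem1-5}(3) then yields $f^{\Diamond}(D)=f^{\Diamond}(A_1)\oplus\cdots\oplus f^{\Diamond}(A_p)$, and since each $A_i=U_i\Sigma_i V_i^H$ is an SVD of the $i$th Fourier block, Lemma \ref{lem1-5}(2) together with the Moore--Penrose expression Lemma \ref{lem1-5}(4) identifies $f^{\Diamond}(A_i)=U_i\tilde{f}(\Sigma_i)V_i^H$ with $\tilde f(\Sigma_i)$ exactly as in equation (5). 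Re-inserting $(F_p^H\otimes\,\cdot\,)(F_p\otimes\,\cdot\,)=I$ between the three block-diagonal factors and regrouping reproduces $\bcirc(\tens{U})\cdot\bcirc(\hat f(\tens{S}))\cdot\bcirc(\tens{V}^H)$, which by the definition in Theorem \ref{the1-1} equals $\bcirc(f^{\Diamond}(\tens{A}))$. The delicate points here are verifying that $U_i\Sigma_iV_i^H$ really is a genuine SVD of $A_i$ (so that Lemma \ref{lem1-5}(4) applies block-wise) and that the Kronecker--Fourier conjugation interacts correctly with the direct sum; everything else is bookkeeping.

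With the commutation relation in hand, part (1) follows by first recording the matrix transpose identity $(f^{\Diamond}(A))^{\top}=f^{\Diamond}(A^{\top})$, which I would deduce from $A^{\top}=\overline{A^H}$ combined with Lemma \ref{lem1-5}(1) and Corollary \ref{cor1-1}(2). Then I compute
\begin{align*}
\bcirc\bigl([f^{\Diamond}(\tens{A})]^{\top}\bigr)
&=\bigl(\bcirc(f^{\Diamond}(\tens{A}))\bigr)^{\top}
=\bigl(f^{\Diamond}(\bcirc(\tens{A}))\bigr)^{\top}\\
&=f^{\Diamond}\bigl((\bcirc(\tens{A}))^{\top}\bigr)
=f^{\Diamond}\bigl(\bcirc(\tens{A}^{\top})\bigr)
=\bcirc\bigl(f^{\Diamond}(\tens{A}^{\top})\bigr),
\end{align*}
using Lemma \ref{lem1-3}(4) for the transpose of a block circulant and the commutation relation twice; injectivity of $\bcirc$ then gives $[f^{\Diamond}(\tens{A})]^{\top}=f^{\Diamond}(\tens{A}^{\top})$.

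For part (2), since $\tens{P}$ and $\tens{Q}$ are orthogonal tensors, $\bcirc(\tens{P})$ and $\bcirc(\tens{Q})$ are orthogonal (hence unitary) matrices. Applying the commutation relation, then Lemma \ref{lem1-3}(1) to turn the T-products into ordinary matrix products, then the unitary-invariance Lemma \ref{lem1-5}(2), and finally the commutation relation again, I obtain $\bcirc(f^{\Diamond}(\tens{P}*\tens{A}*\tens{Q}))=\bcirc(\tens{P})\,\bcirc(f^{\Diamond}(\tens{A}))\,\bcirc(\tens{Q})=\bcirc(\tens{P}*f^{\Diamond}(\tens{A})*\tens{Q})$; injectivity of $\bcirc$ closes the argument. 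The only genuine obstacle in the whole corollary is the commutation lemma above; once it is proved, both statements are formal consequences of the already-established matrix identities.
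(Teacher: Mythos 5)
Your argument is correct, but it is not the route the paper takes. The paper proves both parts directly at the tensor level: for (1) it transposes the T-SVD, $\tens{A}^{\top}=\overline{\tens{V}}*\tens{S}^{\top}*\tens{U}^{\top}$, observes this is again a T-SVD, and reads off $f^{\Diamond}(\tens{A}^{\top})=\overline{\tens{V}}*\hat{f}(\tens{S})^{\top}*\tens{U}^{\top}=[f^{\Diamond}(\tens{A})]^{\top}$; for (2) it absorbs $\tens{P}$ and $\tens{Q}$ into the unitary factors, using that unitary tensors form a group under $*$, so that $(\tens{P}*\tens{U})*\tens{S}*(\tens{Q}^{\top}*\tens{V})^{H}$ is a T-SVD of $\tens{P}*\tens{A}*\tens{Q}$. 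Your proof instead front-loads the commutation relation $\bcirc(f^{\Diamond}(\tens{A}))=f^{\Diamond}(\bcirc(\tens{A}))$, which is not available at this point in the paper --- it is precisely Theorem \ref{the1-14}, proved only in Section 5.2, and your sketch of it (Fourier conjugation, unitary invariance, the direct-sum property, and the Moore--Penrose formula of Lemma \ref{lem1-5}(4) applied blockwise) matches the paper's later proof. What your approach buys is uniformity: once the $\bcirc$-isomorphism is in hand, both parts of the corollary, and indeed most of the subsequent invariance results, become formal consequences of the known matrix identities via injectivity of $\bcirc$. What it costs is that you prove a strictly stronger statement than needed, whereas the paper's in-place argument is shorter and uses only the definition of $f^{\Diamond}$ via the T-SVD. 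Both are valid; just be aware that if this corollary is meant to precede the isomorphism theorem, your presentation would require reordering the exposition or stating the commutation relation as a standalone lemma proved from scratch, exactly as you propose.
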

\begin{proof}
{\rm (1)} From Theorem \ref{the1-1}, if $\tens{A}$ has the T-SVD decomposition $\tens{A}=\tens{U} * \tens{S} * \tens{V}^{H}$, taking the  transpose on both sides, then it turns to be
$$
\tens{A}^{\top}=\overline{\tens{V}} * \tens{S}^{\top} * \tens{U}^{\top}
$$
it follows that
$$
f^{\Diamond}(\tens{A}^{\top})=\overline{\tens{V}} *\hat{f}( \tens{S})^{\top} * \tens{U}^{\top}=[\tens{U} * \hat{f}( \tens{S}) * \tens{V}^{H}]^{\top}=[f^{\Diamond}(\tens{A})]^{\top}.
$$\\
{\rm (2)} The result follows from the fact that the unitary tensors form a group under the multiplication `$*$'. Define $\tens{B}=\tens{P} *\tens{A}* \tens{Q}$, then we obtain
$$
\begin{aligned}
f^{\Diamond}(\tens{B})&=f^{\Diamond}(\tens{P}*\tens{U}*\tens{S}*\tens{V}^{H}*\tens{Q})\\
&=(\tens{P}*\tens{U})*\hat{f}(\tens{S})*(\tens{Q}^{\top}*\tens{V})^{H}\\
&=\tens{P}*\tens{U}*\hat{f}(\tens{S})*\tens{V}^{H}*\tens{Q}\\
&=\tens{P} *f^{\Diamond}(\tens{A})* \tens{Q}.
\end{aligned}
$$
\end{proof}
\begin{corollary}\label{cor1-3}{\rm(Composite functions)} Suppose $\tens{A}\in \mathbb{C}^{m \times n \times p}$ is a third order tensor and it has T-SVD decomposition $\tens{A}=\tens{U} * \tens{S} * \tens{V}^{H}$. Let $\{\sigma_1, \sigma_2,\cdots, \sigma_r\}$ be the singular values set of $\bcirc (\tens{S})$. Assume $h: \mathbb{C}\rightarrow \mathbb{C}$ and $g: \mathbb{C}\rightarrow \mathbb{C}$ are two scalar functions and $g(h(\sigma_i))$ exists for all $i$.

Let $g^{\Diamond}: \mathbb{C}^{m \times n \times p} \rightarrow \mathbb{C}^{m \times n \times p}$ and $h^{\Diamond}: \mathbb{C}^{m \times n \times p} \rightarrow \mathbb{C}^{m \times n \times p}$ be the induced generalized tensor functions. Moreover, let $f: \mathbb{C}\rightarrow \mathbb{C}$ be the composite function $f=g\circ h$. Then the induced tensor function $f^{\Diamond}: \mathbb{C}^{m \times n \times p} \rightarrow \mathbb{C}^{m \times n \times p}$ satisfies
$$
f^{\Diamond}(\tens{A})=g^{\Diamond}(h^{\Diamond}(\tens{A})).
$$
\end{corollary}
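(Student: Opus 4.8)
The plan is to push everything through the block-circulant operator $\bcirc$ and its block-diagonalization by the Fourier matrix, so that the tensor identity collapses to a statement about generalized matrix functions on each Fourier block. By Theorem \ref{the1-1} the generalized tensor function $f^{\Diamond}$ is defined by conjugating $\bcirc(\tens{A})$ with $F_{p}\otimes I$ into a block-diagonal matrix whose diagonal blocks are $A_{1},\dots,A_{p}$, where $A_{i}=U_{i}\Sigma_{i}V_{i}^{H}$ is the SVD of the $i$th block, and then applying the generalized matrix function $\tilde f=f^{\Diamond}$ block-wise. Since $\bcirc$ is an algebra isomorphism onto block-circulant matrices (Lemma \ref{lem1-3}) and $F_{p}\otimes I$ is unitary, two tensors coincide iff all of their corresponding Fourier blocks agree. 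Thus it suffices to prove, for each fixed block $M=A_{i}$, the matrix composition law $(g\circ h)^{\Diamond}(M)=g^{\Diamond}\!\left(h^{\Diamond}(M)\right)$.

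First I would identify the Fourier blocks of $h^{\Diamond}(\tens{A})$. Applying $\bcirc$ to $h^{\Diamond}(\tens{A})=\tens{U}*\hat h(\tens{S})*\tens{V}^{H}$ and using Lemma \ref{lem1-3}(1) together with the block factorizations of Theorem \ref{the1-1}, the $i$th Fourier block of $\bcirc(h^{\Diamond}(\tens{A}))$ is $U_{i}\,\tilde h(\Sigma_{i})\,V_{i}^{H}$. Because $\Sigma_{i}$ is (rectangular) diagonal, equation (5) of Theorem \ref{the1-1} together with Lemma \ref{lem1-5}(4) shows $\tilde h(\Sigma_{i})=h^{\Diamond}(\Sigma_{i})$, and by the unitary-invariance property Lemma \ref{lem1-5}(2) this block equals $h^{\Diamond}(U_{i}\Sigma_{i}V_{i}^{H})=h^{\Diamond}(A_{i})$. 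Feeding this into the definition of $g^{\Diamond}$ applied to the tensor $h^{\Diamond}(\tens{A})$, that is, taking the SVD of each Fourier block and applying $\tilde g=g^{\Diamond}$, shows that the $i$th block of $\bcirc(g^{\Diamond}(h^{\Diamond}(\tens{A})))$ is $g^{\Diamond}(h^{\Diamond}(A_{i}))$, while the $i$th block of $\bcirc(f^{\Diamond}(\tens{A}))$ is $f^{\Diamond}(A_{i})$. This is exactly the reduction announced above.

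It remains to establish the matrix identity $(g\circ h)^{\Diamond}(M)=g^{\Diamond}(h^{\Diamond}(M))$. I would write the compact SVD $M=U_{r}\Sigma_{r}V_{r}^{H}$ with $\Sigma_{r}=\mathrm{diag}(\sigma_{1},\dots,\sigma_{r})$, $\sigma_{j}>0$, so that $h^{\Diamond}(M)=U_{r}\,h(\Sigma_{r})\,V_{r}^{H}$. Writing each $h(\sigma_{j})=\omega_{j}\,|h(\sigma_{j})|$ with $|\omega_{j}|=1$ and $D=\mathrm{diag}(\omega_{j})$, the columns of $U_{r}D$ and $V_{r}$ together with the numbers $|h(\sigma_{j})|$ furnish a compact SVD of $h^{\Diamond}(M)$, whence $g^{\Diamond}(h^{\Diamond}(M))=U_{r}\,\mathrm{diag}\big(\omega_{j}\,g(|h(\sigma_{j})|)\big)\,V_{r}^{H}$. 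Invoking the odd-function completion of Remark \ref{rem1-1} (so that $g(\omega_{j} r)=\omega_{j}\,g(r)$ for $r\ge 0$, which in the real case amounts to $\omega_{j}=\pm1=\mathrm{sign}(h(\sigma_{j}))$ and the oddness of $g$) collapses the diagonal entries to $g(h(\sigma_{j}))=(g\circ h)(\sigma_{j})$, which reconstitutes $(g\circ h)^{\Diamond}(M)$. Combining with the previous paragraph yields $f^{\Diamond}(\tens{A})=g^{\Diamond}(h^{\Diamond}(\tens{A}))$.

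The main obstacle is the phase bookkeeping in the last step: one must ensure that applying $g^{\Diamond}$ to $h^{\Diamond}(M)$ reproduces $g(h(\sigma_{j}))$ rather than the modulus $g(|h(\sigma_{j})|)$, and this is precisely where the odd-function extension of Remark \ref{rem1-1} is indispensable, the argument being clean and unconditional in the real case and requiring the unimodular equivariance $g(\omega r)=\omega\,g(r)$ in general. A secondary care point is that some $h(\sigma_{j})$ may vanish, so that $\rank\big(h^{\Diamond}(M)\big)$ can be strictly smaller than $r$; those modes drop out of the compact SVD of $h^{\Diamond}(M)$, but since $g(h(\sigma_{j}))=g(0)=0$ under the odd completion they contribute nothing to $(g\circ h)^{\Diamond}(M)$ either, so both sides still agree entry for entry. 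Finally one should record that the block-diagonalization genuinely commutes with taking block-wise SVDs, i.e.\ that the definition of $g^{\Diamond}$ on the tensor $h^{\Diamond}(\tens{A})$ really acts through the SVDs of its Fourier blocks; this is immediate from Theorem \ref{the1-1} once those blocks have been identified as $h^{\Diamond}(A_{i})$.
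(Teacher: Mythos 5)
Your proof is correct, but it follows a genuinely different route from the paper's. The paper argues entirely at the tensor level: it sets $\tens{B}=h^{\Diamond}(\tens{A})=\tens{U}*\Theta*\tens{V}^{H}$ with $\Theta=\hat{h}(\tens{S})$, inserts a permutation tensor $\tens{P}$ so that $\widetilde{\Theta}=\tens{P}*\Theta*\tens{P}^{\top}$ has its diagonal entries in non-increasing order and $\widetilde{\tens{U}}*\widetilde{\Theta}*\widetilde{\tens{V}}^{H}$ serves as a T-SVD of $\tens{B}$, applies $g^{\Diamond}$ through that factorization, and then uses the unitary invariance of $\hat{g}$ to cancel $\tens{P}$ and conclude $\hat{g}(\hat{h}(\tens{S}))=\hat{f}(\tens{S})$. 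You instead push everything through $\bcirc$ and the Fourier block-diagonalization, reduce to the matrix identity $(g\circ h)^{\Diamond}(M)=g^{\Diamond}(h^{\Diamond}(M))$ on each block, and prove that identity with explicit phase bookkeeping. What your route buys is precisely the step the paper elides: a permutation alone does not make $\tens{U}*\Theta*\tens{V}^{H}$ a legitimate T-SVD of $h^{\Diamond}(\tens{A})$ when some $h(\sigma_j)$ are negative, zero, or non-real, so the paper's assertion that $g^{\Diamond}(\tens{B})=\widetilde{\tens{U}}*\hat{g}(\widetilde{\Theta})*\widetilde{\tens{V}}^{H}$ silently assumes the phases can be absorbed; your factorization $h(\sigma_j)=\omega_j|h(\sigma_j)|$, the appeal to the odd extension of Remark \ref{rem1-1} (with the honest caveat that genuinely complex values of $h$ require the stronger equivariance $g(\omega r)=\omega g(r)$), and your treatment of the rank drop when $h(\sigma_j)=0$ make the argument airtight exactly where the paper is loosest. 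The price is a longer detour through the $\bcirc$ isomorphism (essentially pre-proving a special case of Theorem \ref{the1-14}), whereas the paper's tensor-level argument is shorter and stays within the T-product formalism. One small point to tighten: when you say the columns of $U_rD$ and $V_r$ with the numbers $|h(\sigma_j)|$ "furnish a compact SVD" of $h^{\Diamond}(M)$, you should note either that these must be permuted into non-increasing order or that the generalized matrix function is independent of that ordering (which follows from Lemma \ref{lem1-5}(2) applied with permutation matrices); the paper handles this explicitly with its permutation tensor.
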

\begin{proof}
Let $\tens{B}=h^{\Diamond}(\tens{A})=\tens{U}*\hat{f}(\Sigma)*\tens{V}^{H}=:\tens{U}*\Theta*\tens{V}^{H}$. Let $\tens{P}\in \mathbb{R}^{m\times n\times p}$ be a permutation tensor (see Definition \ref{def1-11}) such that $\tilde{\Theta}=\tens{P}*\Theta* \tens{P}^{\top}$ has diagonal entries ordered in a non-increasing order. Then it follows that tensor $\tens{B}$ is given by
$$
\tens{B}=\widetilde{\tens{U}}*\widetilde{\Theta}*\widetilde{\tens{V}}^{H},
$$
where $\widetilde{\tens{U}}=\tens{U}*\tens{P}$ and $\widetilde{\tens{V}}=\tens{V}*\tens{P}$. It follows that
$$
\begin{aligned}
g^{\Diamond}(h^{\Diamond}(\tens{A}))&=g^{\Diamond}(\tens{B})\\
&=\widetilde{\tens{U}}*\hat{g}(\widetilde{\Theta})*\widetilde{\tens{V}}^{H}\\
&=\tens{U}*\tens{P}*\hat{g}(\widehat{\Theta})*\tens{P}^{\top}*\tens{V}^H\\
&=\tens{U}*\hat{g}(\tens{P}*\widetilde{\Theta}*\tens{P}^{\top})*\tens{V}^H\\
&=\tens{U}*\hat{g}(\Theta)*\tens{V}^{H}\\
&=\tens{U}*\hat{g}(\hat{h}(\Sigma))*\tens{V}^{H}\\
&=\tens{U}*\hat{f}(\Sigma)*\tens{V}^{H}\\
&=f^{\Diamond}(\tens{A}).
\end{aligned}
$$
\end{proof}
The next corollaries describe the relationship between the generalized tensor functions and the tensor T-functions.
\begin{corollary}\label{cor1-4}
Suppose $\tens{A}\in \mathbb{C}^{m \times n \times p}$ is a third order tensor, and let $f: \mathbb{C}\rightarrow \mathbb{C}$ be a scalar function, and we also use $f$ to denote the tensor T-function. Let $f^{\Diamond}: \mathbb{C}^{m \times n \times p} \rightarrow \mathbb{C}^{m \times n \times p}$ be the induced generalized tensor function. Then
$$
f^{\Diamond}(\tens{A})= f(\sqrt{\tens{A}*\tens{A}^{H}})*(\sqrt{\tens{A}*\tens{A}^{H}})^{\dag} *\tens{A}=\tens{A}*(\sqrt{\tens{A}^{H}*\tens{A}})^{\dag} *f(\sqrt{\tens{A}^{H}*\tens{A}}).
$$
\end{corollary}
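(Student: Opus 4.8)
The plan is to transport the claimed identity from tensors to block circulant matrices via the operator $\bcirc$, apply the matrix-level identity of Lemma \ref{lem1-5}(4), and transport the result back. The engine of the argument is the fact, implicit in the construction of Theorem \ref{the1-1}, that $\bcirc$ intertwines the tensor generalized function with the generalized matrix function, namely
$$
\bcirc(f^{\Diamond}(\tens{A}))=f^{\Diamond}(\bcirc(\tens{A})),
$$
where the left occurrence of $f^{\Diamond}$ is the generalized tensor function and the right one is the generalized matrix function of Hawkins and Ben-Israel. This holds because the Fourier block-diagonalization $(F_{p}\otimes I)\,\mathrm{diag}(\Sigma_i)\,(F_{p}^{H}\otimes I)$ of $\bcirc(\tens{S})$ exhibits the matrix singular values of $\bcirc(\tens{A})$, the generalized matrix function of a block-diagonal matrix acts blockwise while conjugation by the unitary Fourier factors passes through (Lemma \ref{lem1-5}(2)--(3)), and each block $\tilde f(\Sigma_i)$ is precisely the generalized matrix function evaluated on the $i$-th Fourier slice by the defining formula for $\tilde f$ in Theorem \ref{the1-1}.

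First I would set $M:=\bcirc(\tens{A})$ and rewrite the ingredients of the matrix identity in tensor terms. By Lemma \ref{lem1-3}, $M^{H}=\bcirc(\tens{A}^{H})$ and $MM^{H}=\bcirc(\tens{A}*\tens{A}^{H})$. Since the scalar square root and the scalar function $f$ act here as tensor T-functions, and T-functions commute with $\bcirc$ by their defining relation $f(\bcirc(\cdot))=\bcirc(f(\cdot))$, I obtain $\sqrt{MM^{H}}=\bcirc(\sqrt{\tens{A}*\tens{A}^{H}})$ and $f(\sqrt{MM^{H}})=\bcirc(f(\sqrt{\tens{A}*\tens{A}^{H}}))$. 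I would then argue that the Moore--Penrose inverse preserves the block circulant structure, i.e.\ $(\bcirc(\tens{C}))^{\dag}=\bcirc(\tens{C}^{\dag})$: the Fourier transform $F_{p}\otimes I$ block-diagonalizes $\bcirc(\tens{C})$, the pseudoinverse of a block-diagonal matrix is computed blockwise, and the inverse Fourier conjugation returns a block circulant matrix. Hence $(\sqrt{MM^{H}})^{\dag}=\bcirc((\sqrt{\tens{A}*\tens{A}^{H}})^{\dag})$.

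With these translations I would apply Lemma \ref{lem1-5}(4) to $M$,
$$
f^{\Diamond}(M)=f(\sqrt{MM^{H}})(\sqrt{MM^{H}})^{\dag}M,
$$
and substitute the block circulant expressions. Collapsing the product of block circulant matrices through the homomorphism property $\bcirc(\tens{X}*\tens{Y})=\bcirc(\tens{X})\bcirc(\tens{Y})$ of Lemma \ref{lem1-3}(1) yields
$$
f^{\Diamond}(\bcirc(\tens{A}))=\bcirc\!\left(f(\sqrt{\tens{A}*\tens{A}^{H}})*(\sqrt{\tens{A}*\tens{A}^{H}})^{\dag}*\tens{A}\right).
$$
Combining this with the intertwining relation $\bcirc(f^{\Diamond}(\tens{A}))=f^{\Diamond}(\bcirc(\tens{A}))$ and applying $\bcirc^{-1}$ gives the first asserted equality. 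The second equality follows identically from the companion matrix form $f^{\Diamond}(M)=M(\sqrt{M^{H}M})^{\dag}f(\sqrt{M^{H}M})$ in Lemma \ref{lem1-5}(4), using $M^{H}M=\bcirc(\tens{A}^{H}*\tens{A})$.

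The step I expect to be the main obstacle is making rigorous the two commutation facts on which the transport rests: that $\bcirc$ intertwines the tensor and matrix generalized functions, and that the Moore--Penrose inverse preserves the block circulant structure. The square root and $f$ are unproblematic once the T-function definition is invoked, but the pseudoinverse requires the blockwise Fourier argument above, and the intertwining relation requires checking carefully that the F-diagonal singular-tube structure of $\tens{S}$ corresponds exactly to the matrix singular values of $\bcirc(\tens{A})$ under the Fourier block-diagonalization, so that $\bcirc(\hat f(\tens{S}))$ coincides with $f^{\Diamond}$ applied to the Fourier-domain slices.
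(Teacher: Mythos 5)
Your proof is correct, but it takes a genuinely different and considerably more explicit route than the paper. The paper disposes of this corollary in a single sentence, asserting that the identity is ``the consequence of the fact that the generalized tensor function is equal to the tensor T-function, when the tensor is a F-square tensor'' --- that is, it reads the formula as the verbatim tensor analogue of Lemma \ref{lem1-5}(4), with the occurrences of $f$ on the right-hand side interpreted as T-functions of the F-square Hermitian tensors $\sqrt{\tens{A}*\tens{A}^{H}}$ and $\sqrt{\tens{A}^{H}*\tens{A}}$, and leaves every verification to the reader. You instead transport the entire identity through $\bcirc$, apply the matrix-level identity to $M=\bcirc(\tens{A})$, and pull back. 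This works, and nothing in it is circular: the intertwining relation $\bcirc(f^{\Diamond}(\tens{A}))=f^{\Diamond}(\bcirc(\tens{A}))$ that you correctly identify as the engine is exactly Theorem \ref{the1-14} of the paper (proved later, in Section 5.2, by the same Fourier block-diagonalization argument you sketch, with no dependence on this corollary); the commutation of the Moore--Penrose inverse with $\bcirc$ is in fact how the paper \emph{defines} the tensor Moore--Penrose inverse (Definition \ref{def1-6}(4)), so your blockwise-Fourier justification is more than is strictly needed, although it is what makes $\bcirc^{-1}$ applicable; and the commutation of the T-function with $\bcirc$ is Corollary \ref{cor1-10}. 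What your approach buys is a complete reduction to the established matrix result of Hawkins and Ben-Israel, supplying exactly the two commutation facts the paper takes for granted; what the paper's implicit approach buys is brevity, at the cost of leaving unproved precisely the steps you flag as the main obstacle.
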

\begin{proof}
This identity is the consequence of the fact that the generalized tensor function is equal to the tensor T-function, when the tensor is a F-square tensor.
\end{proof}

\begin{corollary}\label{cor1-5}
Suppose $\tens{A}\in \mathbb{C}^{m \times n \times p}$ is a third order tensor, and let $f: \mathbb{C}\rightarrow \mathbb{C}$ and $g: \mathbb{C}\rightarrow \mathbb{C}$ be two scalar functions, such that $f^{\Diamond}(\tens{A})$ and $g(\tens{A}*\tens{A}^{H})$ are defined. Then we have
$$
g(\tens{A}*\tens{A}^{H})*f^{\Diamond}(\tens{A})=f^{\Diamond}(\tens{A})*g(\tens{A}^{H}*\tens{A}).
$$
\end{corollary}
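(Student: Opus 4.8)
The plan is to reduce the tensor identity, via the T-SVD together with the Fourier block-diagonalization, to an elementary commutation of diagonal matrices, where the claim becomes an immediate entrywise check.

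First I would substitute the T-SVD $\tens{A} = \tens{U} * \tens{S} * \tens{V}^{H}$. Using $(\tens{A}*\tens{B})^{H} = \tens{B}^{H} * \tens{A}^{H}$ from Lemma \ref{lem1-3}(3) and the unitarity relations $\tens{U}^{H} * \tens{U} = \tens{I}$, $\tens{V}^{H} * \tens{V} = \tens{I}$, the two Hermitian products collapse to
$$
\tens{A}*\tens{A}^{H} = \tens{U}*(\tens{S}*\tens{S}^{H})*\tens{U}^{H}, \qquad \tens{A}^{H}*\tens{A} = \tens{V}*(\tens{S}^{H}*\tens{S})*\tens{V}^{H}.
$$
Since a unitary tensor is invertible with $\tens{U}^{-1}=\tens{U}^{H}$, and the tensor T-function inherits the similarity property $g(\tens{P}^{-1}*\tens{M}*\tens{P}) = \tens{P}^{-1}*g(\tens{M})*\tens{P}$ from the matrix identity $g(XMX^{-1})=Xg(M)X^{-1}$ through the operator $\bcirc$ (Lemma \ref{lem1-3}), it follows that
$$
g(\tens{A}*\tens{A}^{H}) = \tens{U}*g(\tens{S}*\tens{S}^{H})*\tens{U}^{H}, \qquad g(\tens{A}^{H}*\tens{A}) = \tens{V}*g(\tens{S}^{H}*\tens{S})*\tens{V}^{H}.
$$

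Next I would substitute these, together with $f^{\Diamond}(\tens{A}) = \tens{U}*\hat{f}(\tens{S})*\tens{V}^{H}$, into both sides of the claimed identity. On the left the factor $\tens{U}^{H} * \tens{U}$ collapses to $\tens{I}$ and on the right $\tens{V}^{H} * \tens{V}$ collapses to $\tens{I}$, so after cancellation the corollary reduces to
$$
g(\tens{S}*\tens{S}^{H})*\hat{f}(\tens{S}) = \hat{f}(\tens{S})*g(\tens{S}^{H}*\tens{S}),
$$
an identity purely among functions of the F-diagonal tensor $\tens{S}$. To settle it I would apply $\bcirc$ and pass to the Fourier domain using the block-diagonalizations of $\bcirc(\tens{S})$ and $\bcirc(\hat{f}(\tens{S}))$ given in Theorem \ref{the1-1}: the three objects are block-diagonalized by the same Fourier factors $F_{p}\otimes I$, so conjugating these away splits the identity into the $p$ scalar blocks
$$
g(\Sigma_i \Sigma_i^{H})\,\tilde{f}(\Sigma_i) = \tilde{f}(\Sigma_i)\, g(\Sigma_i^{H} \Sigma_i), \qquad i=1,2,\ldots,p.
$$
Here each $\Sigma_i \in \mathbb{R}^{m\times n}$ is a nonnegative rectangular diagonal matrix, so $\Sigma_i\Sigma_i^{H}$ and $\Sigma_i^{H}\Sigma_i$ are square diagonal matrices sharing the nonzero diagonal entries $\sigma_j^{2}$, while the definition of $\tilde{f}$ in Theorem \ref{the1-1} makes $\tilde{f}(\Sigma_i)$ the $m\times n$ diagonal matrix with entries $f(\sigma_j)$. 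A direct computation then shows that both sides of the block identity are the $m\times n$ diagonal matrix with $(j,j)$-entry $g(\sigma_j^{2})f(\sigma_j)$, proving the blocks and hence the corollary.

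The main obstacle is not a single hard estimate but the bookkeeping in the middle steps: one must justify that $g$, a T-function defined only for F-square tensors, commutes with unitary similarity exactly as a matrix function does, and then track the F-diagonal and block-circulant structure so that the three different-sized objects $g(\tens{S}*\tens{S}^{H})$ of size $m\times m\times p$, $\hat{f}(\tens{S})$ of size $m\times n\times p$, and $g(\tens{S}^{H}*\tens{S})$ of size $n\times n\times p$ align correctly under the common Fourier factors. Once the problem is pushed down to the diagonal blocks $\Sigma_i$, the remaining verification is immediate.
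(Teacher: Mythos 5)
Your proposal follows essentially the same route as the paper's proof: substitute the T-SVD, use unitary similarity to pull $g$ inside as $g(\tens{A}*\tens{A}^{H})=\tens{U}*g(\tens{S}*\tens{S}^{H})*\tens{U}^{H}$, cancel the unitary factors, and reduce everything to the commutation $g(\tens{S}*\tens{S}^{H})*\hat{f}(\tens{S})=\hat{f}(\tens{S})*g(\tens{S}^{H}*\tens{S})$. The only difference is that you verify this last identity explicitly on the Fourier-domain diagonal blocks $\Sigma_i$, whereas the paper simply asserts it; your added detail is correct and harmless.
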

\begin{proof}
From $\tens{A}=\tens{U} * \tens{S} * \tens{V}^{H}$, it follows $\tens{A}*\tens{A}^{H}=\tens{U}*(\tens{S}*\tens{S}^{H})*\tens{U}^{H}$. Thus we have
$$
\begin{aligned}
g(\tens{A}*\tens{A}^{H})*f^{\Diamond}(\tens{A})&=\tens{U}*g(\tens{S}*\tens{S}^{H})*\tens{U}^{H}*\tens{U} *\hat{f}( \tens{S}) * \tens{V}^{H}\\
&=\tens{U}*\hat{g}(\tens{S}*\tens{S}^{H})*\hat{f}( \tens{S}) * \tens{V}^{H}\\
&=\tens{U}*\hat{f}( \tens{S})*\hat{g}(\tens{S}^{H}*\tens{S}) * \tens{V}^{H}\\
&=\tens{U}*\hat{f}( \tens{S})*\tens{V}^{H}*\tens{V} *\hat{g}(\tens{S}^{H}*\tens{S}) * \tens{V}^{H}\\
&=f^{\Diamond}(\tens{A})*g(\tens{A}^{H}*\tens{A}).
\end{aligned}
$$
\end{proof}
To illustrate the difference between the generalized tensor function and the standard tensor function \cite{Lund1}, we introduce the following example.
\begin{example}
Suppose $\tens{A}\in \mathbb{C}^{1\times 1\times 4}$ is a real tensor and $f:\mathbb{C}\rightarrow\mathbb{C}$ is a scalar function defined as $f(x)=x^2$. We now give the difference result of the tensor function $f(\tens{A})$ and $f^{\Diamond}(\tens{A})$. The elements of $\tens{A}$ is given by
$$
A^{(1)}=1, \ A^{(2)}=2, \ A^{(3)}=3, \ A^{(4)}=4.
$$
then the standard function of $\tens{A}$ is computed as
$$
\begin{aligned}
\bcirc(f(\tens{A}))&=\bcirc(\tens{A}*\tens{A})=\bcirc(\tens{A})\bcirc(\tens{A})\\
&=\begin{bmatrix}
1&2&3&4\\
4&1&2&3\\
3&4&1&2\\
2&3&4&1
\end{bmatrix}^2=\begin{bmatrix}
26&28&26&20\\
20&26&28&26\\
26&20&26&28\\
28&26&20&26
\end{bmatrix}.
\end{aligned}
$$
So the elements of $f(\tens{A})\in \mathbb{R}^{1\times 1\times 4}$ is
$$f(\tens{A})^{(1)}=26,\ f(\tens{A})^{(2)}=20,\ f(\tens{A})^{(3)}=26,\ f(\tens{A})^{(4)}=28.$$
On the other hand, $\bcirc(\tens{A})$ has the singular value decomposition after the discrete Fourier transformation:
$$
\bcirc(\tens{A})=(F_4\otimes I_1)
 \begin{bmatrix}
10&0&0&0\\
0&-2-2{\textbf i}&0&0\\
0&0&-2&0\\
0&0&0&-2+2{\textbf i}\\
\end{bmatrix}(F_4^H\otimes I_1).
$$
The singular value decomposition of the middle matrix is
$$
\begin{aligned}
\begin{bmatrix}
-1&0&0&0\\
0&0&-\frac{1+{\textbf i}}{\sqrt{2}}&0\\
0&0&0&-1\\
0&-\frac{1-{\textbf i}}{\sqrt{2}}&0&0\\
\end{bmatrix}\cdot
\begin{bmatrix}
10&0&0&0\\
0&2\sqrt{2}&0&0\\
0&0&2\sqrt{2}&0\\
0&0&0&2\\
\end{bmatrix}\cdot
\begin{bmatrix}
-1&0&0&0\\
0&0&1&0\\
0&0&0&1\\
0&1&0&0\\
\end{bmatrix}^{\top},
\end{aligned}
$$
so $\bcirc(f^{\Diamond}(\tens{A}))$ equals to
$$
\begin{aligned}
(F_4\otimes I_1)\cdot\begin{bmatrix}
-1&0&0&0\\
0&0&-\frac{1+{\textbf i}}{\sqrt{2}}&0\\
0&0&0&-1\\
0&-\frac{1-{\textbf i}}{\sqrt{2}}&0&0\\
\end{bmatrix}\cdot\begin{bmatrix}
10^2&0&0&0\\
0&(2\sqrt{2})^2&0&0\\
0&0&(2\sqrt{2})^2&0\\
0&0&0&2^2\\
\end{bmatrix}\cdot
\begin{bmatrix}
-1&0&0&0\\
0&0&1&0\\
0&0&0&1\\
0&1&0&0\\
\end{bmatrix}^{\top}\cdot
(F_4^H\otimes I_1)
\end{aligned}
$$
after calculation, we get
$$
\bcirc(f^{\Diamond}(\tens{A}))=
\begin{bmatrix}
24-2\sqrt{2}&26-2\sqrt{2}&24+2\sqrt{2}&26+2\sqrt{2}\\
26+2\sqrt{2}&24-2\sqrt{2}&26-2\sqrt{2}&24+2\sqrt{2}\\
24+2\sqrt{2}&26+2\sqrt{2}&24-2\sqrt{2}&26-2\sqrt{2}\\
26-2\sqrt{2}&24+2\sqrt{2}&26+2\sqrt{2}&24-2\sqrt{2}\
\end{bmatrix}.
$$
That is to say the elements of $f^{\Diamond}(\tens{A})\in \mathbb{R}^{1\times 1\times 4}$ is
$$
f^{\Diamond}(\tens{A})^{(1)}=24-2\sqrt{2},\ f^{\Diamond}(\tens{A})^{(2)}=26+2\sqrt{2},\ f^{\Diamond}(\tens{A})^{(3)}=24+2\sqrt{2},\ f^{\Diamond}(\tens{A})^{(4)}=26-2\sqrt{2}.
$$
So we find the standard tensor function is not equal to the generalized tensor function $f(\tens{A})\neq f^{\Diamond}(\tens{A})$ even though $f(0)=0$ and $\tens{A}$ is a F-square tensor.
\end{example}
The result is if $\tens{A}$ is a normal tensor i.e., $\tens{A}*\tens{A}^H=\tens{A}^H*\tens{A}$, and $f(0)=0$, then we will have $f(\tens{A})= f^{\Diamond}(\tens{A})$.

\subsection{Tensor T-CSVD and orthogonal projection}
The compact singular value decomposition (CSVD) of matrices plays an important role in the Moore-Penrose inverse and the least squares problem since  the compact version of SVD can save the space of storage when the rank of the matrix is not very large and also speed up the running of the algorithm. \par
In this subsection, we extend the T-SVD theorem to tensor T-CSVD based on the T-product of tensors. We first introduce some concept of tensors analogue to the matrix analysis.
\begin{definition}\label{def1-6}
Let $\tens{A}$ be an $m \times n \times p$ real-valued tensor.\\
{\rm (1)} \ The T-range space of $\tens{A}$, $\tens{R}(\tens{A}):= {\rm Ran}((F_p\otimes I_m)\bcirc(\tens{A})(F_p^{H}\otimes I_m))$,\ `{\rm Ran}' means the range space,\\
{\rm (2)} \ The T-null space of $\tens{A}$, $\tens{N}(\tens{A}):={\rm Null}((F_p\otimes I_m)\bcirc(\tens{A})(F_p^{H}\otimes I_m))$, \ `{\rm Null}' represents the null space,\\
{\rm (3)} \ The tensor norm $\norm{\tens{A}}:=\norm{\bcirc(\tens{A})}$,\\
{\rm (4)} \ The Moore-Penrose inverse $\tens{A}^{\dag}=\bcirc^{-1}((\bcirc(\tens{A}))^{\dag})$.
\end{definition}
Now we introduce the tensor T-CSVD based on the T-product according to the T-SVD of tensors.\par
Suppose $\tens{A}$ is an $m \times n \times p$ real-valued tensor. Then we have the tensor T-SVD of $\tens{A}$
$$
\tens{A}=\tens{U} * \tens{S} * \tens{V}^{H},
$$
where $\tens{U}$, $\tens{V}$ are unitary $m \times m \times p$ and $n \times n \times p$ tensors respectively, and $\tens{S}$ is an $m \times n \times p$ F-diagonal tensor, which can be factorized as Equations (1), (2), and (3).\par
Suppose $\Sigma_{i}$ has rank $r_i$, denote $U_i=(u_1^{(i)},u_2^{(i)},\cdots,u_{m}^{(i)})$ and $V_i=(v_1^{(i)},v_2^{(i)},\cdots,v_{n}^{(i)})$, $i=1,2,\ldots, p$ and $r=\max\limits_{1 \leq i \leq p} \{r_i\}$ is usually called the $tubal$-$rank$ of $\tens{A}$ based on the T-product \cite{Kilmer2}. We introduce the T-CSVD by ignoring the ``$0$'' singular values and the corresponding tubes. That is
$$
(\Sigma_i)_{r}={\rm diag}(c_1^{(i)},c_2^{(i)},\cdots,c_{r}^{(i)})\in \mathbb{R}^{r\times r},
$$
$$
(U_i)_{r}=(u_1^{(i)},u_2^{(i)},\cdots,u_{r}^{(i)})\in \mathbb{C}^{m\times r},
$$
$$
(V_i)_{r}=(v_1^{(i)},v_2^{(i)},\cdots,v_{r}^{(i)})\in \mathbb{C}^{n\times r},
$$
then it turns out to be
$$
\begin{aligned}
\bcirc(\tens{A})=&(F_{p}\otimes I_{m} )
\begin{bmatrix}
 (U_1)_{r} &  & &\\
& (U_2)_{r}   & &\\
 & & \ddots &    \\

 & & & (U_{p})_{r} \\

\end{bmatrix}
(F_{p}^{H}\otimes I_r )\\
&\times(F_{p}\otimes I_r )
\begin{bmatrix}
 (\Sigma_1)_{r} &  & \\
&  (\Sigma_2)_{r}   & &\\
&  & \ddots &    \\

&  & & (\Sigma_{p})_{r} \\
\end{bmatrix}(F_{p}^{H}\otimes I_r )\\
&\times(F_{p}\otimes I_r )
\begin{bmatrix}
 (V_1)_{r}^H &  & &\\
& (V_2)_{r}^H   & &\\
 & & \ddots &    \\

 & & & (V_{p})_{r}^H \\

\end{bmatrix}
(F_{p}^{H}\otimes I_{n} )\\
=&\bcirc(\tens{U}_{(r)})\bcirc(\tens{S}_{(r)})\bcirc(\tens{V}_{(r)}^{H}),
\end{aligned}
$$
where $\tens{U}_{(r)}\in \mathbb{C}^{m\times r\times p}$, $\tens{S}_{(r)}\in \mathbb{C}^{r\times r\times p}$, $\tens{V}_{(r)}^{H}\in \mathbb{C}^{r\times n\times p}$,
that is to say
\begin{equation}
\tens{A}=\tens{U}_{(r)}*\tens{S}_{(r)}*\tens{V}_{(r)}^{H}.
\end{equation}
\begin{remark}\label{rem1-2}
In matrix analysis, if a matrix $A$ has CSVD
$$
A=U_r S_r V_r^H,
$$
where $S_r={\rm diag}(\sigma_1,\sigma_2 \cdots, \sigma_r)$, we have $\sigma_i\neq0$, for all $i\in\{1,2,\ldots,r\}$, but in T-CSVD for tensors, we may have
$c^{(i)}_j= 0$ for some $i$ and $j$, since we choose $r=\max\limits_{i}\{r_i\}$.
\end{remark}
We give an example to illustrate our T-CSVD.
\begin{example}Suppose $\tens{A}\in \mathbb{C}^{3\times 3\times 3}$ is a complex tensor whose elements of the frontal slices are given as:
$$
A^{(1)}=\begin{bmatrix}
\frac{2}{3}&0&0\\
0&\frac{5}{3}&0\\
0&0&\frac{2}{3}\\
\end{bmatrix},\quad
A^{(2)}=\begin{bmatrix}
\frac{1}{6}+\frac{\sqrt{3}}{6}\textbf{i}&0&0\\
0&-\frac{5}{6}-\frac{\sqrt{3}}{6}\textbf{i}&0\\
0&0&-\frac{1}{3}-\frac{\sqrt{3}}{3}\textbf{i}\\
\end{bmatrix},
$$
$$
A^{(3)}=\begin{bmatrix}
\frac{1}{6}-\frac{\sqrt{3}}{6}\textbf{i}&0&0\\
0&-\frac{5}{6}+\frac{\sqrt{3}}{6}\textbf{i}&0\\
0&0&-\frac{1}{3}+\frac{\sqrt{3}}{3}\textbf{i}\\
\end{bmatrix}.
$$
Then we have
$$
\bcirc(\tens{A})=(F_{3}\otimes I_{3} )
\begin{bmatrix}
 A_1 &  &\\
& A_2   &\\
&&A_3
\end{bmatrix}
(F_{3}^{H}\otimes I_3 ),
$$
where
$$
A_1=\begin{bmatrix}
1&0&0\\
0&0&0\\
0&0&0\\
\end{bmatrix},\quad
A_2=\begin{bmatrix}
1&0&0\\
0&2&0\\
0&0&0\\
\end{bmatrix},\quad
A_3=\begin{bmatrix}
0&0&0\\
0&3&0\\
0&0&2\\
\end{bmatrix}.
$$
So the tubal-rank of $\tens{A}$ is $r_t(\tens{A})=2$. Each $A_i,\ i=1,2,3$ can be decomposite as follows:
$$
A_1=(U_1)_r(\Sigma_1)_r(V_1)_r^{H}=\begin{bmatrix}
1&0\\
0&0\\
0&1\\
\end{bmatrix}
\begin{bmatrix}
1&0\\
0&0\\
\end{bmatrix}
\begin{bmatrix}
1&0&0\\
0&0&1\\
\end{bmatrix},
$$
$$
A_2=(U_2)_r(\Sigma_2)_r(V_2)_r^{H}=\begin{bmatrix}
0&1\\
1&0\\
0&0\\
\end{bmatrix}
\begin{bmatrix}
2&0\\
0&1\\
\end{bmatrix}
\begin{bmatrix}
0&1&0\\
1&0&0\\
\end{bmatrix},
$$
$$
A_3=(U_3)_r(\Sigma_3)_r(V_3)_r^{H}=\begin{bmatrix}
0&0\\
1&0\\
0&1\\
\end{bmatrix}
\begin{bmatrix}
3&0\\
0&2\\
\end{bmatrix}
\begin{bmatrix}
0&1&0\\
0&0&1\\
\end{bmatrix}.
$$
By the inverse Fast Fourier Transformation, the T-CSVD decomposition of $\tens{A}$ is given by
$$
\tens{A}=\tens{U}_{(r)}*\tens{S}_{(r)}*\tens{V}_{(r)}^H,
$$
where the frontal slices of tensor $\tens{U}_{(r)}\in\mathbb{C}^{3\times2\times3},\tens{S}_{(r)}\in\mathbb{C}^{2\times2\times3},\tens{V}_{(r)}^H\in\mathbb{C}^{2\times3\times3}$ are given by
$$
U^{(1)}_{(r)}=\frac{1}{3}\begin{bmatrix}
1&1\\
2&0\\
0&2\\
\end{bmatrix},\quad
U^{(2)}_{(r)}=\frac{1}{3}\begin{bmatrix}
1&-\frac{1}{2}+\frac{\sqrt{3}}{2}\textbf{i}\\
-1&0\\
0&-\frac{1}{2}-\frac{\sqrt{3}}{2}\textbf{i}\\
\end{bmatrix},\quad
U^{(3)}_{(r)}=\frac{1}{3}\begin{bmatrix}
1&-\frac{1}{2}-\frac{\sqrt{3}}{2}\textbf{i}\\
-1&0\\
0&-\frac{1}{2}+\frac{\sqrt{3}}{2}\textbf{i}\\
\end{bmatrix},
$$
$$
S^{(1)}_{(r)}=\frac{1}{3}
\begin{bmatrix}
6&0\\
0&3\\
\end{bmatrix},\quad
S^{(2)}_{(r)}=\frac{1}{3}
\begin{bmatrix}
-\frac{3}{2}-\frac{\sqrt{3}}{2}\textbf{i}&0\\
0&-\frac{3}{2}-\frac{\sqrt{3}}{2}\textbf{i}\\
\end{bmatrix},\quad
S^{(3)}_{(r)}=\frac{1}{3}
\begin{bmatrix}
-\frac{3}{2}+\frac{\sqrt{3}}{2}\textbf{i}&0\\
0&-\frac{3}{2}+\frac{\sqrt{3}}{2}\textbf{i}\\
\end{bmatrix},
$$
$$
V^{H(1)}_{(r)}=\frac{1}{3}\begin{bmatrix}
1&2&0\\
1&0&2\\
\end{bmatrix},\quad
V^{H(2)}_{(r)}=\frac{1}{3}\begin{bmatrix}
1&-1&0\\
-\frac{1}{2}+\frac{\sqrt{3}}{2}\textbf{i}&0&\frac{1}{2}-\frac{\sqrt{3}}{2}\textbf{i}\\
\end{bmatrix},
$$
$$
V^{H(3)}_{(r)}=\frac{1}{3}\begin{bmatrix}
1&-1&0\\
-\frac{1}{2}-\frac{\sqrt{3}}{2}\textbf{i}&0&\frac{1}{2}+\frac{\sqrt{3}}{2}\textbf{i}\\
\end{bmatrix}.
$$
\end{example}
\begin{corollary}\label{cor1-6}
Suppose $\tens{A}$ is an $m \times n \times p$ real-valued tensor. Then we have the tensor T-SVD of $\tens{A}$ (see Fig. \ref{fig1-1}) is
$$
\tens{A}=\tens{U} * \tens{S} * \tens{V}^{H},
$$
the T-CSVD of $\tens{A}$ is
$$
\tens{A}=\tens{U}_{(r)} * \tens{S}_{(r)} * \tens{V}^{H}_{(r)},
$$
and the Moore-Penrose inverse of tensor $\tens{A}$ is given by
$$
\tens{A}^{\dag}=\tens{V}_{(r)} * \tens{S}_{(r)}^{\dag} * \tens{U}_{(r)}^{H}.
$$

\end{corollary}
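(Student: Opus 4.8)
The first two identities require no work beyond what is already on the table: the factorization $\tens{A}=\tens{U}*\tens{S}*\tens{V}^{H}$ is exactly Lemma \ref{lem1-4}, and the compact form $\tens{A}=\tens{U}_{(r)}*\tens{S}_{(r)}*\tens{V}_{(r)}^{H}$ is the factorization just constructed above by discarding the zero singular tubes. So the genuine content is the Moore--Penrose formula $\tens{A}^{\dag}=\tens{V}_{(r)}*\tens{S}_{(r)}^{\dag}*\tens{U}_{(r)}^{H}$. The plan is to push everything through the block circulant operator: by Definition \ref{def1-6}(4), $\tens{A}^{\dag}$ is \emph{defined} as $\bcirc^{-1}((\bcirc(\tens{A}))^{\dag})$, so it suffices to show that $\bcirc$ applied to the claimed right-hand side equals $(\bcirc(\tens{A}))^{\dag}$.

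First I would compute $(\bcirc(\tens{A}))^{\dag}$ in the Fourier basis. By the factorizations of Equations (1)--(3) one has $\bcirc(\tens{A})=(F_p\otimes I_m)\,D\,(F_p^{H}\otimes I_n)$ with $D=\mathrm{diag}(A_1,\dots,A_p)$ and $A_i=(U_i)_r(\Sigma_i)_r(V_i)_r^{H}$. Since $F_p\otimes I_m$ and $F_p\otimes I_n$ are unitary, the Moore--Penrose inverse commutes with conjugation by them, giving $(\bcirc(\tens{A}))^{\dag}=(F_p\otimes I_n)\,D^{\dag}\,(F_p^{H}\otimes I_m)$; and because the Moore--Penrose inverse of a block-diagonal matrix is the block-diagonal of the blockwise inverses, $D^{\dag}=\mathrm{diag}(A_1^{\dag},\dots,A_p^{\dag})$ with $A_i^{\dag}=(V_i)_r(\Sigma_i)_r^{\dag}(U_i)_r^{H}$.

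Next I would refactor $D^{\dag}$ as the product of three block-diagonal matrices $\mathrm{diag}((V_i)_r)\cdot\mathrm{diag}((\Sigma_i)_r^{\dag})\cdot\mathrm{diag}((U_i)_r^{H})$ and insert the identities $(F_p^{H}\otimes I_r)(F_p\otimes I_r)=I$ between consecutive factors. Matching each of the three resulting conjugated block-diagonal matrices against the Fourier factorizations of $\bcirc(\tens{V}_{(r)})$, $\bcirc(\tens{S}_{(r)}^{\dag})$, and $\bcirc(\tens{U}_{(r)}^{H})$ — the outer two obtained from the factorizations of $\tens{V}_{(r)}^{H}$ and $\tens{U}_{(r)}$ via Lemma \ref{lem1-3}(4), the middle one from Definition \ref{def1-6}(4) applied to $\tens{S}_{(r)}$ — I obtain $(\bcirc(\tens{A}))^{\dag}=\bcirc(\tens{V}_{(r)})\,\bcirc(\tens{S}_{(r)}^{\dag})\,\bcirc(\tens{U}_{(r)}^{H})=\bcirc(\tens{V}_{(r)}*\tens{S}_{(r)}^{\dag}*\tens{U}_{(r)}^{H})$ by Lemma \ref{lem1-3}(1). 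Applying $\bcirc^{-1}$ then yields the asserted formula.

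The one delicate point is precisely the one flagged in Remark \ref{rem1-2}: because $r=\max_i r_i$, some diagonal entries $c_j^{(i)}$ of $(\Sigma_i)_r$ may vanish, so $A_i=(U_i)_r(\Sigma_i)_r(V_i)_r^{H}$ is a zero-padded decomposition rather than a genuine compact SVD of $A_i$. I would verify that the identity $A_i^{\dag}=(V_i)_r(\Sigma_i)_r^{\dag}(U_i)_r^{H}$ nonetheless holds by checking the four Penrose conditions directly, using the column-orthonormality relations $(U_i)_r^{H}(U_i)_r=(V_i)_r^{H}(V_i)_r=I_r$ together with the entrywise convention $0^{\dag}=0$; the orthonormality is what makes the cross terms collapse, so the padding with zeros is harmless. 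This verification is the main obstacle, and essentially the only one — the remainder is bookkeeping with the Fourier factorization and Lemma \ref{lem1-3}.
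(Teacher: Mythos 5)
Your proof is correct and follows essentially the same route as the paper's: the paper's entire argument is the identity $\bcirc(\tens{A}^{\dag})=(\bcirc(\tens{A}))^{\dag}$ together with the matrix Moore--Penrose inverse, which is exactly what you carry out via the Fourier block-diagonalization and Lemma \ref{lem1-3}. The only difference is that you supply the details the paper dismisses as ``obvious,'' including the genuinely delicate verification (via the four Penrose conditions and $0^{\dag}=0$) that $A_i^{\dag}=(V_i)_r(\Sigma_i)_r^{\dag}(U_i)_r^{H}$ survives the zero-padding caused by taking $r=\max_i r_i$ as in Remark \ref{rem1-2} -- a point the paper silently skips.
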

\begin{proof}
It is obvious by using the identity
$$
\bcirc(\tens{A}^{\dag})=(\bcirc(\tens{A}))^{\dag},
$$
where $(\bcirc(\tens{A}))^{\dag}$ is defined by matrix Moore-Penrose inverse \cite{Ben1}.
\end{proof}
It is easy to establish the following results of projection.
\begin{corollary}\label{cor1-7}
Suppose $\tens{A}$ is an $m \times n \times p$ real-valued tensor and the T-CSVD of $\tens{A}$ is
$$
\tens{A}=\tens{U}_{(r)} * \tens{S}_{(r)} * \tens{V}^{H}_{(r)},
$$
then we have\\
{\rm (1)} \ $\bcirc(\tens{U}_{(r)} *\tens{U}_{(r)}^{H})={\rm P}_{\tens{R}(\tens{A})}=\bcirc(\tens{A}*\tens{A}^{\dag})$, $\tens{U}_{(r)}^{H}*\tens{U}_{(r)}=\tens{I}_{r} $,\\
{\rm (2)} \ $\bcirc(\tens{V}_{(r)} *\tens{V}_{(r)}^{H})={\rm P}_{\tens{R}(\tens{A}^{H})}=\bcirc(\tens{A}^{\dag}*\tens{A})$, $\tens{V}_{(r)}^{H}*\tens{V}_{(r)}=\tens{I}_{r} $,\\
{\rm (3)} \ The tensor $\tens{E}:=\tens{U}_{(r)}*\tens{V}_{(r)}^{H}$ is a real partial isometry tensor satisfying $\bcirc(\tens{E}*\tens{E}^{H})={\rm P}_{\tens{R}(\tens{A})}$ and $\bcirc(\tens{E}^{H}*\tens{E})={\rm P}_{\tens{R}(\tens{A}^{H})}$.
\end{corollary}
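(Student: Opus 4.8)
The plan is to prove all three parts by pushing each claimed tensor identity through the operator $\bcirc$, exploiting that $\bcirc$ is injective (it has a left inverse $\bcirc^{-1}$), so that it suffices to verify every identity for the associated block-circulant matrices. The algebraic engine is Lemma \ref{lem1-3}: it converts each T-product into a matrix product, $\bcirc(\tens{X}*\tens{Y})=\bcirc(\tens{X})\bcirc(\tens{Y})$, and commutes with conjugate transposition, $\bcirc(\tens{X}^{H})=\bcirc(\tens{X})^{H}$; combined with $\bcirc(\tens{A}^{\dag})=\bcirc(\tens{A})^{\dag}$ from Definition \ref{def1-6}(4) and Corollary \ref{cor1-6}. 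Since $\bcirc(\tens{U}_{(r)})$, $\bcirc(\tens{S}_{(r)})$ and $\bcirc(\tens{V}_{(r)})$ are simultaneously block-diagonalized by the Fourier matrices $F_p\otimes I$, every resulting matrix identity decouples into $p$ independent statements, one for each Fourier block $A_i=(U_i)_r(\Sigma_i)_r(V_i)_r^{H}$, reducing the whole corollary to routine facts about the compact SVD of a matrix.

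First I would handle the orthonormality relations $\tens{U}_{(r)}^{H}*\tens{U}_{(r)}=\tens{I}_r$ and $\tens{V}_{(r)}^{H}*\tens{V}_{(r)}=\tens{I}_r$: in each block the columns of $(U_i)_r$ (respectively $(V_i)_r$) are $r$ orthonormal columns of a unitary matrix, so $(U_i)_r^{H}(U_i)_r=I_r$, and reassembling through $(F_p\otimes I_r)\,\mathrm{diag}(I_r)\,(F_p^{H}\otimes I_r)=I_{rp}=\bcirc(\tens{I}_r)$ gives the claim after $\bcirc^{-1}$. For the projector identities I would use the block computations $(U_i)_r(U_i)_r^{H}=A_iA_i^{\dag}$ and $(V_i)_r(V_i)_r^{H}=A_i^{\dag}A_i$, so that $\bcirc(\tens{U}_{(r)}*\tens{U}_{(r)}^{H})=\mathrm{diag}(A_iA_i^{\dag})=\bcirc(\tens{A})\bcirc(\tens{A})^{\dag}=\bcirc(\tens{A}*\tens{A}^{\dag})$, which equals $\mathrm{P}_{\tens{R}(\tens{A})}$ by Definition \ref{def1-6}, and symmetrically for $\tens{V}$ and $\tens{A}^{H}$.

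Part (3) should then follow formally from (1) and (2). Setting $\tens{E}=\tens{U}_{(r)}*\tens{V}_{(r)}^{H}$, Lemma \ref{lem1-3} gives $\tens{E}^{H}=\tens{V}_{(r)}*\tens{U}_{(r)}^{H}$, and I would compute $\tens{E}*\tens{E}^{H}=\tens{U}_{(r)}*(\tens{V}_{(r)}^{H}*\tens{V}_{(r)})*\tens{U}_{(r)}^{H}=\tens{U}_{(r)}*\tens{U}_{(r)}^{H}$ using (2), whence $\bcirc(\tens{E}*\tens{E}^{H})=\mathrm{P}_{\tens{R}(\tens{A})}$ by (1); symmetrically $\tens{E}^{H}*\tens{E}=\tens{V}_{(r)}*\tens{V}_{(r)}^{H}$ yields $\bcirc(\tens{E}^{H}*\tens{E})=\mathrm{P}_{\tens{R}(\tens{A}^{H})}$. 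The partial-isometry identity is immediate: $\tens{E}*\tens{E}^{H}*\tens{E}=\tens{U}_{(r)}*(\tens{U}_{(r)}^{H}*\tens{U}_{(r)})*\tens{V}_{(r)}^{H}=\tens{U}_{(r)}*\tens{V}_{(r)}^{H}=\tens{E}$.

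The step I expect to be the main obstacle is showing that $\tens{E}$ is \emph{real}, even though $\tens{U}_{(r)}$ and $\tens{V}_{(r)}$ are genuinely complex. The cleanest route is to note that $\tens{A}$ real makes $\bcirc(\tens{A})$ a real block-circulant matrix, so its Fourier blocks are conjugate-symmetric, $A_{p+2-i}=\overline{A_i}$, and the singular factors can correspondingly be chosen so that $(U_{p+2-i})_r=\overline{(U_i)_r}$ and $(V_{p+2-i})_r=\overline{(V_i)_r}$; then the Fourier blocks $(U_i)_r(V_i)_r^{H}$ of $\bcirc(\tens{E})$ inherit the same conjugate symmetry, which is exactly the condition for $\bcirc(\tens{E})$ to be real, i.e. for $\tens{E}$ to be a real tensor. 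Equivalently one identifies $\tens{E}$ as the partial-isometry factor of the polar decomposition $\tens{A}=\tens{E}*\sqrt{\tens{A}^{H}*\tens{A}}$ and invokes realness of the polar factor of a real object. Finally I would flag one delicate point in (1)--(2): because the T-CSVD pads every block to the common size $r=\max_i r_i$ (Remark \ref{rem1-2}), a block with $r_i<r$ has $(U_i)_r(U_i)_r^{H}$ of rank $r$ whereas $A_iA_i^{\dag}$ has rank $r_i$, so the identification of $\bcirc(\tens{U}_{(r)}*\tens{U}_{(r)}^{H})$ with $\mathrm{P}_{\tens{R}(\tens{A})}=\bcirc(\tens{A}*\tens{A}^{\dag})$ is clean only when all retained singular values are nonzero (uniform tubal rank); this is the spot a rigorous writeup must treat with care, while the relations expressed purely through $\tens{U}_{(r)}$ and $\tens{V}_{(r)}$, and the partial-isometry statement of (3), remain valid in general.
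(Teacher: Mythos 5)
Your proposal is correct, but there is nothing in the paper to compare it against: the authors state Corollary \ref{cor1-7} with only the remark ``it is easy to establish the following results of projection'' and give no proof, so your write-up supplies an argument the paper omits entirely. Your route --- push every identity through the injective operator $\bcirc$, use Lemma \ref{lem1-3} and $\bcirc(\tens{A}^{\dag})=\bcirc(\tens{A})^{\dag}$ to reduce to the Fourier blocks, and then invoke the standard compact-SVD facts $(U_i)_r^{H}(U_i)_r=I_r$ and $(U_i)_r(U_i)_r^{H}=A_iA_i^{\dag}$ blockwise --- is exactly the mechanism the paper uses for its other proofs (e.g.\ Theorem \ref{the1-1} and Corollary \ref{cor1-6}), so it is the natural completion. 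Two of your additions are genuinely valuable. First, your caveat about the padding in Remark \ref{rem1-2} is a real defect in the statement, not in your proof: when $r_i<r=\max_i r_i$ for some block, $(U_i)_r(U_i)_r^{H}$ is a rank-$r$ projector while $A_iA_i^{\dag}$ has rank $r_i$, so the claimed identities $\bcirc(\tens{U}_{(r)}*\tens{U}_{(r)}^{H})=\bcirc(\tens{A}*\tens{A}^{\dag})$ and its $\tens{V}$-analogue hold only under uniform tubal rank (equivalently, all retained $c_j^{(i)}$ nonzero), whereas the purely $\tens{U},\tens{V}$-expressed relations and the partial-isometry identity $\tens{E}*\tens{E}^{H}*\tens{E}=\tens{E}$ survive in general. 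Second, your conjugate-symmetry argument ($A_{p+2-i}=\overline{A_i}$ forces the Fourier blocks of $\bcirc(\tens{E})$ to be conjugate-symmetric, hence $\tens{E}$ real) fills in precisely the fact the authors later admit they assert ``without proof''; the only point to make explicit in a final write-up is that the SVD is not unique, so the factors must be \emph{chosen} to respect that symmetry (your polar-decomposition alternative sidesteps this cleanly). One cosmetic slip: expressions like $\bcirc(\tens{U}_{(r)}*\tens{U}_{(r)}^{H})=\mathrm{diag}(A_iA_i^{\dag})$ should carry the conjugation by $F_p\otimes I_m$ on both sides, as the block-diagonal form lives in the Fourier domain.
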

 \begin{figure}\label{fig1-3}
  \centering
  \includegraphics[width=1\textwidth]{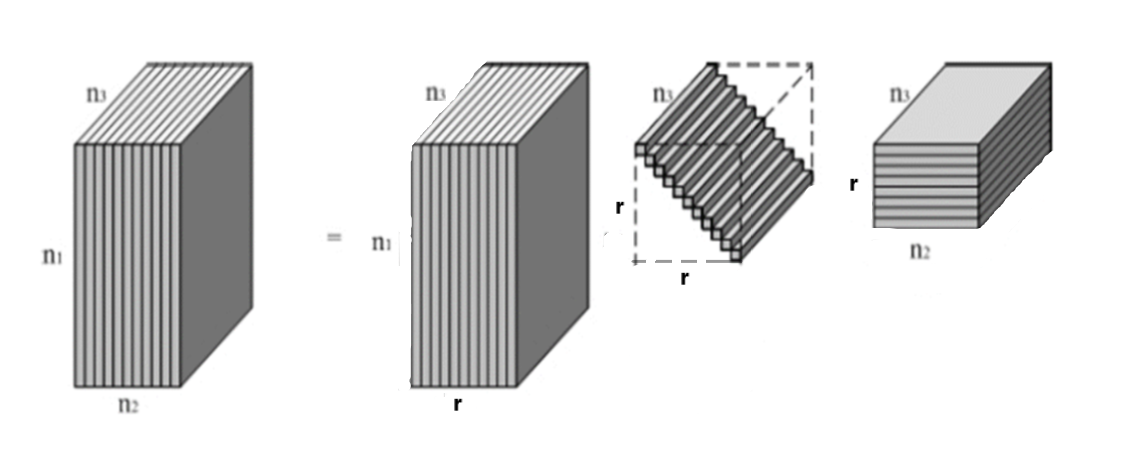}
  \caption{T-CSVD of Tensors}
\end{figure}
By using the Tensor T-CSVD, we can also derive the generalized tensor function of tensors. For simplicity, we give the following results without proof.
\begin{theorem}\label{the1-2} {\rm (Compact tensor singular value decomposition)}
Suppose $\tens{A}$ is an $m \times n \times p$ real-valued tensor and the T-CSVD of $\tens{A}$ is
$$
\tens{A}=\tens{U}_{(r)} * \tens{S}_{(r)} * \tens{V}^{H}_{(r)}.
$$
If $f: \mathbb{C}\rightarrow \mathbb{C}$ is a scalar function, then the induced function $f^{\Diamond}: \mathbb{C}^{m \times n \times p} \rightarrow \mathbb{C}^{m \times n \times p}$ could be defined as:
$$
f^{\Diamond}(\tens{A})=\tens{U}_{(r)} * \hat{f}(\tens{S}_{(r)}) * \tens{V}_{(r)}^{H},
$$
where $\hat{f}(\tens{S}_{(r)})$ is defined as above.
\end{theorem}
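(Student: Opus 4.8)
The plan is to show that the compact formula coincides with the full T-SVD definition of $f^{\Diamond}$ given in Theorem \ref{the1-1}, so that the content of the theorem is a consistency (well-definedness) statement rather than a fresh construction. First I would pass to the Fourier domain: by the factorizations in Equations (1)--(3) together with Lemma \ref{lem1-3}, the operator $\bcirc$ block-diagonalizes $\tens{A}$ as $\bcirc(\tens{A})=(F_{p}\otimes I_{m})\,{\rm diag}(A_1,\ldots,A_p)\,(F_{p}^{H}\otimes I_{n})$, where each frontal block $A_i=U_i\Sigma_i V_i^{H}$ is the ordinary matrix SVD of rank $r_i$. Because every tensor appearing in both definitions block-diagonalizes over the same Fourier basis, each definition acts blockwise, so it suffices to compare, for each $i$, the block produced by the full T-SVD with the block produced by the T-CSVD.

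Next I would compute $\tilde{f}(\Sigma_i)$ explicitly. Since $\Sigma_i$ is diagonal with entries $\sigma_1^{(i)}\ge\cdots\ge\sigma_{r_i}^{(i)}>0$ followed by zeros, the matrices $\sqrt{\Sigma_i\Sigma_i^{H}}$ and its Moore--Penrose inverse $(\sqrt{\Sigma_i\Sigma_i^{H}})^{\dag}$ are diagonal, and a direct computation from Equation (5) shows that $\tilde{f}(\Sigma_i)$ is the diagonal matrix with entries $f(\sigma_j^{(i)})$ for $j\le r_i$ and $0$ for $j>r_i$. The key point here is Remark \ref{rem1-1}: because $f$ is completed to an odd function with $f(0)=0$, the value assigned to every zero singular value is exactly $0$. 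Consequently $U_i\,\tilde{f}(\Sigma_i)\,V_i^{H}$ is precisely the generalized matrix function $f^{\Diamond}(A_i)$ of Lemma \ref{lem1-5}.

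I would then treat the compact side. The T-CSVD uses the common truncation level $r=\max_{i}\{r_i\}$, so for a block with $r_i<r$ the truncated factors $(U_i)_{r},(\Sigma_i)_{r},(V_i)_{r}$ retain columns associated with the spurious zero singular values $c_j^{(i)}=0$ flagged in Remark \ref{rem1-2}. Applying $\tilde{f}$ to $(\Sigma_i)_{r}$ and again invoking $f(0)=0$, those extra columns are annihilated, so $(U_i)_{r}\,\tilde{f}((\Sigma_i)_{r})\,(V_i)_{r}^{H}=U_i\,\tilde{f}(\Sigma_i)\,V_i^{H}$ for every $i$. Reassembling the $p$ blocks into a block-diagonal matrix and applying $\bcirc^{-1}$ then yields $\tens{U}_{(r)}*\hat{f}(\tens{S}_{(r)})*\tens{V}_{(r)}^{H}=\tens{U}*\hat{f}(\tens{S})*\tens{V}^{H}=f^{\Diamond}(\tens{A})$, which is the assertion of the theorem.

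The main obstacle I expect is exactly the bookkeeping of the mismatch between the block ranks $r_i$ and the common compact rank $r$: one must verify that the T-CSVD, which deliberately keeps $r-r_i$ zero singular tubes, does not change the value of $f^{\Diamond}$, and that the result is independent of the particular ordered factorization chosen. Both issues reduce to the single fact $f(0)=0$, but stating the independence cleanly---namely that reordering equal or zero singular values only conjugates the factors by a permutation tensor, exactly as exploited in the proof of Corollary \ref{cor1-3}---is where the argument needs care.
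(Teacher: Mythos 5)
Your argument is correct, and it is worth noting that the paper itself states Theorem \ref{the1-2} explicitly \emph{without} proof, so your write-up supplies exactly the consistency check the authors leave implicit: pass to the Fourier domain, observe that both the full T-SVD formula of Theorem \ref{the1-1} and the compact formula act blockwise on the diagonal blocks $A_i=U_i\Sigma_iV_i^H$, and check that the truncation to the common level $r=\max_i r_i$ changes nothing because the retained spurious zero singular values contribute zero. One small refinement: the annihilation of the zero-singular-value entries does not actually require the odd completion $f(0)=0$ of Remark \ref{rem1-1}. In the defining formula $\tilde f(\Sigma_i)=f\bigl(\sqrt{\Sigma_i\Sigma_i^H}\bigr)\bigl(\sqrt{\Sigma_i\Sigma_i^H}\bigr)^{\dag}\Sigma_i$ the Moore--Penrose factor already places a $0$ in every diagonal position corresponding to a zero singular value, and the trailing factor $\Sigma_i$ does so as well, so the diagonal entry is $0$ irrespective of the value of $f(0)$; invoking Remark \ref{rem1-1} is harmless but not where the weight of the argument lies. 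Your closing remark about independence of the ordering of equal or zero singular values, handled by conjugation with a permutation tensor as in the proof of Corollary \ref{cor1-3}, is the right way to certify that $f^{\Diamond}(\tens{A})$ does not depend on the particular compact factorization chosen.
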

\begin{remark}\label{rem1-3} From the above illustration, since $\tens{S}_{(r)}$ is a special tensor, we find that $\hat{f}(\tens{S}_{(r)})=f^{\Diamond}(\tens{S}_{(r)})$. Thus, $f^{\Diamond}(\tens{A})=\tens{U}_{(r)} * f^{\Diamond}(\tens{S}_{(r)}) * \tens{V}_{(r)}^{H}$.
\end{remark}
By using the projection tensor $\tens{E}$ of the T-CSVD of a tensor, we obtain the following results.
\begin{corollary}\label{cor1-8}
If $f, g, h: \mathbb{C}\rightarrow \mathbb{C}$ are scalar functions, and $f^{\Diamond}, g^{\Diamond}, h^{\Diamond}: \mathbb{C}^{m \times n \times p} \rightarrow \mathbb{C}^{m \times n \times p}$ are the corresponding induced generalized tensor functions. Suppose $\tens{A}$ is an $m \times n \times p$ real-valued tensor and the T-CSVD of $\tens{A}$ is
$$
\tens{A}=\tens{U}_{(r)} * \tens{S}_{(r)} * \tens{V}^{H}_{(r)},
$$
then the partial isometry tensor $\tens{E}=\tens{U}_{(r)}*\tens{V}_{(r)}^{H}$ is a real tensor,
and we have\\
{\rm (1)} \ If $f(z)=k$, then $f^{\Diamond}(\tens{A})=k\tens{E}$,\\
{\rm (2)} \ If $f(z)=z$, then $f^{\Diamond}(\tens{A})=\tens{A}$,\\
{\rm (3)} \ If $f(z)=g(z)+h(z)$, then $f^{\Diamond}(\tens{A})=g^{\Diamond}(\tens{A})+h^{\Diamond}(\tens{A})$,\\
{\rm (3)} \ If $f(z)=g(z)h(z)$, then $f^{\Diamond}(\tens{A})=g^{\Diamond}(\tens{A})*\tens{E}^{H}*h^{\Diamond}(\tens{A})$.
\end{corollary}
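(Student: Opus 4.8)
My plan is to verify each of the four identities in Corollary~\ref{cor1-8} by reducing the generalized tensor function to the compact decomposition of Theorem~\ref{the1-2} and exploiting the projection relations in Corollary~\ref{cor1-7}. The starting point is the T-CSVD
\[
\tens{A}=\tens{U}_{(r)}*\tens{S}_{(r)}*\tens{V}_{(r)}^{H},
\]
together with $f^{\Diamond}(\tens{A})=\tens{U}_{(r)}*\hat{f}(\tens{S}_{(r)})*\tens{V}_{(r)}^{H}$ and the orthonormality relations $\tens{U}_{(r)}^{H}*\tens{U}_{(r)}=\tens{I}_{r}$ and $\tens{V}_{(r)}^{H}*\tens{V}_{(r)}=\tens{I}_{r}$ from Corollary~\ref{cor1-7}. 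That $\tens{E}=\tens{U}_{(r)}*\tens{V}_{(r)}^{H}$ is a real partial isometry is already recorded in Corollary~\ref{cor1-7}(3), so I would cite that directly rather than reprove it.

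\textbf{Parts (1)--(3).} For the constant function $f(z)=k$, the induced action on singular tubes sends each nonzero block-slice of $\tens{S}_{(r)}$ to the constant $k$ via $\hat{f}$; since $\hat{f}$ acts through $\tilde{f}(\Sigma_i)=f(\sqrt{\Sigma_i\Sigma_i^H})(\sqrt{\Sigma_i\Sigma_i^H})^{\dag}\Sigma_i$ as in Theorem~\ref{the1-1}, a constant $f$ yields $k$ times the partial-isometry part of each $\Sigma_i$, so $\hat{f}(\tens{S}_{(r)})=k\,\tens{I}_{r}$ in the relevant block structure, whence $f^{\Diamond}(\tens{A})=\tens{U}_{(r)}*(k\tens{I}_{r})*\tens{V}_{(r)}^{H}=k\tens{E}$. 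For $f(z)=z$, the formula $\tilde{f}(\Sigma_i)=\Sigma_i$ collapses $\hat{f}(\tens{S}_{(r)})$ back to $\tens{S}_{(r)}$, giving $f^{\Diamond}(\tens{A})=\tens{A}$. Additivity (the first (3)) follows because $\hat{f}$ is defined slicewise in the Fourier domain and the diagonal action of a sum of scalar functions is the sum of the diagonal actions, so $\hat{(g+h)}(\tens{S}_{(r)})=\hat{g}(\tens{S}_{(r)})+\hat{h}(\tens{S}_{(r)})$, and linearity of the T-product in its middle factor finishes the step.

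\textbf{Part (3), product case.} This is the identity I expect to require the most care. Starting from $f=gh$, I would write
\[
f^{\Diamond}(\tens{A})=\tens{U}_{(r)}*\hat{g}(\tens{S}_{(r)})*\hat{h}(\tens{S}_{(r)})*\tens{V}_{(r)}^{H},
\]
using that $\hat{f}(\tens{S}_{(r)})=\hat{g}(\tens{S}_{(r)})*\hat{h}(\tens{S}_{(r)})$ because $\tens{S}_{(r)}$ acts diagonally and scalar multiplication commutes. The goal is to insert the partial isometry $\tens{E}$ in the middle, so I would rewrite $\tens{E}^{H}*\tens{E}=\tens{V}_{(r)}*\tens{U}_{(r)}^{H}*\tens{U}_{(r)}*\tens{V}_{(r)}^{H}=\tens{V}_{(r)}*\tens{V}_{(r)}^{H}$ and recognize this as the range projection $\tens{P}_{\tens{R}(\tens{A}^{H})}$ from Corollary~\ref{cor1-7}(2). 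Then
\[
g^{\Diamond}(\tens{A})*\tens{E}^{H}*h^{\Diamond}(\tens{A})
=\tens{U}_{(r)}*\hat{g}(\tens{S}_{(r)})*(\tens{V}_{(r)}^{H}*\tens{V}_{(r)})*\hat{h}(\tens{S}_{(r)})*\tens{V}_{(r)}^{H},
\]
and the factor $\tens{V}_{(r)}^{H}*\tens{V}_{(r)}=\tens{I}_{r}$ cancels, leaving exactly $f^{\Diamond}(\tens{A})$.

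\textbf{The main obstacle} is the product case: one must confirm that the insertion of $\tens{E}^{H}$ reconstructs precisely $\tens{V}_{(r)}^{H}*\tens{V}_{(r)}=\tens{I}_{r}$ rather than the full-space projection $\tens{V}*\tens{V}^{H}$, which is where the compactness of the decomposition (restricting to the $r$ retained tubes) is essential. Because Remark~\ref{rem1-2} warns that some $c_j^{(i)}$ may vanish even within the kept block range $r=\max_i r_i$, I would verify that the orthonormality $\tens{U}_{(r)}^{H}*\tens{U}_{(r)}=\tens{V}_{(r)}^{H}*\tens{V}_{(r)}=\tens{I}_{r}$ still holds at the chosen uniform truncation level $r$ (it does, since the columns $(U_i)_r,(V_i)_r$ are drawn from unitary factors), so the cancellations go through slicewise in the Fourier domain and then transfer back by applying $\bcirc^{-1}$.
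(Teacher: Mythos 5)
Your proof is correct and rests on the same mechanism as the paper's: the paper's entire proof is the one-line remark that $\tens{E}^{H}=\tens{E}^{\dag}$, i.e.\ that $\tens{E}$ is a partial isometry, and your explicit cancellations $\tens{U}_{(r)}^{H}*\tens{U}_{(r)}=\tens{V}_{(r)}^{H}*\tens{V}_{(r)}=\tens{I}_{r}$ together with the slicewise diagonal action of $\hat{f}$ are exactly what that remark encodes. You simply supply the details (and the caveat from Remark~\ref{rem1-2} about possibly vanishing retained singular tubes) that the paper omits.
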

\begin{proof}
It follows easily from $\tens{E}^{H}=\tens{E}^{\dag}$.
\end{proof}

\section{Explicit Formula for Generalized Tensor Functions}
\subsection{T-Eigenvalue and Cauchy integral theorem of tensors}
In functional analysis and matrix theories, the partial isometries in Hilbert spaces were studied extensively by von Neumann, Halmos, Erdelyi, and others. A linear operator $U: \mathbb{C}^{n}\rightarrow \mathbb{C}^{m}$ is called a partial isometry if it is norm preserving on the orthogonal complement of its null space, i.e., if
$$
\norm{Ux}=\norm{x},\quad {\rm for \ all}\ x\in \tens{N}(U)^{\perp}=\tens{R}(U^H),
$$
or, equivalently, if it is distance preserving
$$
\norm{Ux-Uy}=\norm{x-y},\quad {\rm for \ all\ } x,y\in\tens{N}(U)^{\perp}.
$$
For matrix cases we have the spectral theorem for rectangular matrices \cite{Ben1}.\par
Suppose $O\neq A\in\mathbb{C}^{m\times n}$ of rank $r$. Let $d(A)=\{d_1,d_2\ldots, d_r\}$ be complex scalars satisfying
$$
|d_i|=\sigma_i,\quad i=1,2,\ldots,r,
$$
where
$$
\sigma_1\geq \sigma_2\geq \cdots\geq\sigma_r>0
$$
are the singular values, i.e., $\sigma(A)$, of $A$.\par
Then there exist $r$ partial isometries $E_i\in\mathbb{C}_{m\times n},\ i=1,2,\ldots,r$ which are all rank $1$ matrices satisfying
$$
E_iE_j^H=O,  \quad E_i^HE_j=O,\quad 1\leq i\neq j\leq r,
$$
$$
E_iE^HA=AE^HE_i,\quad i=1,2,\ldots,r,
$$
where
$$
E=\sum_{i=1}^{r}E_i
$$
is the partial isometry and
$$
A=\sum_{i=1}^rd_iE_i.
$$\par
For third order tensors based on the T-product, we can also introduce $r\times p$ partial isometries tensors of a non-zero tensor $\tens{A}$ which need to satisfy the following equations:
$$
 \tens{E}_j^{(i)}*\tens{E}_l^{(k)H}=0 , \tens{E}_j^{(i)H}*\tens{E}_l^{(k)}=0,\ {\rm for} \  i\neq k \ {\rm or}\  j\neq l.
$$
$$
 \tens{E}_j^{(i)}* \tens{E}^{H}*\tens{A}= \tens{A}* \tens{E}^{H}*\tens{E}_j^{(i)}.
$$
By the definition of $\tens{E}$, we have
$$
\bcirc(\tens{E})=(F_{p}\otimes I_{m} )
\begin{bmatrix}
 (U_1)_{r} &  & &\\
& (U_2)_{r}   & &\\
 & & \ddots &    \\

 & & & (U_{p})_{r} \\

\end{bmatrix}
\begin{bmatrix}
 (V_1)_{r}^H &  & &\\
& (V_2)_{r}^H   & &\\
 & & \ddots &    \\

 & & & (V_{p})_{r}^H \\

\end{bmatrix}
(F_{p}^{H}\otimes I_{n} ),
$$
then we define the complex tensor $\tens{E}_j^{(i)}$ by
$$
\bcirc(\tens{E}_j^{(i)})=(F_{p}^{H} \otimes I_{m})(u_j^{(i)} v_j^{(i)H}) (F_{p}\otimes I_{n} ),\ i=1,2,\ldots ,p,\ j=1,2,\ldots, r.
$$
and it is easy to find
$$
\tens{E}=\sum_{i,j}\tens{E}_j^{(i)},
$$
which means the projection tensor $\tens{E}$ is the sum of the partial isometry tensors $\tens{E}_j^{(i)}$, i.e., $(\tens{E}_j^{(i) })^\dag=(\tens{E}_j^{(i) })^H$. \par
It should be noticed that although $u_j^{(i)}$ and $(v_j^{(i)})^H$ are complex vectors, the isometry matrix $\bcirc(\tens{E})$ can be proved to be a real bicirculant matrix (without proof), and this is the main reason why $f^{\Diamond}$ maps a real tensor to a real tensor, instead of a complex tensor.\par
\begin{remark}\label{rem1-4} It should be noticed that $\tens{E}_j^{(i)}$ is the partial isometry tensor and the superscript `$(i)$' and subscript `$j$' is not related to the frontal slices and the block matrix after Fast Fourier Transformation.
\end{remark}
Suppose $\tens{A}$ is an $m \times n \times p$ real-valued tensor and the T-CSVD of $\tens{A}$ is
$$
\tens{A}=\tens{U}_{(r)} * \tens{S}_{(r)} * \tens{V}^{H}_{(r)},
$$
the set of T-eigenvalues of $\tens{A}$ is defined to be the set ${\rm spec}(\bcirc(\tens{S}_{(r)}*\tens{S}_{(r)}^{H}))$, which is equal to the set of all the positive eigenvalues of the matrix $\bcirc(\tens{A}*\tens{A}^{H})$.\par
Denote the T-eigenvalue set ${\rm spec}(\bcirc(\tens{S}_{(r)}*\tens{S}_{(r)}^{H}))=\{|c_j^{(i)}|^2, 1\leq i\leq p, 1\leq j\leq r \}$, where $r=\sum_{i=1}^{p}r_i$. We call the non-zero elements of $c_j^{(i)}$ the T-singular values of $\tens{A}$.  It can be found that
$$
\tens{A}=\sum_{i,j}c_j^{(i)} \tens{E}_j^{(i)}.
$$
We use the set $\{\gamma_k^{(l)}\}$ to be the set of distinct $c_j^{(i)}$'s. \par
In the following theorems we need our function $f$ to satisfy the following conditions: suppose $f: \mathbb{C}\rightarrow \mathbb{C}$ is a scalar function, $\Gamma_k^{(l)}$ is a simple closed positively oriented contour such that\par
 {\rm (1)} If there is some $c_j^{(i)}=0$, then we need $f$ to be analytic on and  inside $\Gamma_k^{(l)}$ and $f(0)=0$. If $c_j^{(i)}\neq0$ for all $i,j$, we only need $f$ to be analytic on and  inside $\Gamma_k^{(l)}$. \par
{\rm (2)} $\gamma_k^{(l)}$ is inside $\Gamma_k^{(l)}$ but no other $\gamma_{k'}^{(l')}$ is on or inside $\Gamma_k^{(l)}$, and the tensor $\bar{\tens{E}}_k^{(l)}$ is defined by
$$
\bar{\tens{E}}_k^{(l)}=\sum_{c_j^{(i)}=\gamma_k^{(l)}} \tens{E}_j^{(i)}.
$$
\begin{theorem}\label{the1-3} Suppose $f: \mathbb{C}\rightarrow \mathbb{C}$ is a scalar function and $\Gamma_k^{(l)}$ is a contour satisfying the above conditions respectively.\\
{\rm (1)}
The relationship between $\bar{\tens{E}}_k^{(l)}$ and $\Gamma_k^{(l)}$ is
\begin{equation}
\bar{\tens{E}}_k^{(l)}= \tens{E}*\left(\frac{1}{2\pi {\textbf i}} \int_{\Gamma_k^{(l)}} (z \tens{E}-\tens{A})^{\dag} {\rm d} z\right)* \tens{E}.
\end{equation}
{\rm (2)} The tensor function $f^{\Diamond}: \mathbb{C}^{m \times n \times p} \rightarrow \mathbb{C}^{m \times n \times p}$ induced by $f: \mathbb{C}\rightarrow \mathbb{C}$ is
\begin{equation}
f^{\Diamond}(\tens{A})=\tens{E}*\left(\frac{1}{2\pi {\textbf i}} \int_{\Gamma} f(z)(z \tens{E}-\tens{A})^{\dag} {\rm d} z\right)* \tens{E},
\end{equation}
where $\Gamma=\bigcup_{k,l}\Gamma_k^{(l)}$.\par
 In particular, if all $c_{j}^{(i)}\neq 0$ for the tensor $\tens{A}$, then
\begin{equation}
\tens{A}^{\dag}=\frac{1}{2\pi {\textbf  i}}\int_{\Gamma} \frac{1}{z} (z \tens{E}-\tens{A})^{\dag} {\rm d}z.
\end{equation}
\end{theorem}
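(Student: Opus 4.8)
The plan is to reduce everything to the rank-one partial isometry tensors $\tens{E}_j^{(i)}$, to compute the resolvent $(z\tens{E}-\tens{A})^{\dag}$ explicitly, and then to read off both formulas from the residue theorem applied coefficient-by-coefficient. First I would record the two algebraic facts that drive the argument. Writing $\tens{A}=\sum_{i,j}c_j^{(i)}\tens{E}_j^{(i)}$ and $\tens{E}=\sum_{i,j}\tens{E}_j^{(i)}$, the orthogonality relations $\tens{E}_j^{(i)}*\tens{E}_l^{(k)H}=0$ and $\tens{E}_j^{(i)H}*\tens{E}_l^{(k)}=0$ for $(i,j)\neq(k,l)$, together with the partial isometry identity $(\tens{E}_j^{(i)})^{\dag}=(\tens{E}_j^{(i)})^{H}$ (so that $\tens{E}_j^{(i)}*\tens{E}_j^{(i)H}*\tens{E}_j^{(i)}=\tens{E}_j^{(i)}$), immediately yield the sifting identity
$$
\tens{E}*\tens{E}_j^{(i)H}*\tens{E}=\tens{E}_j^{(i)},
$$
because only the $(k,l)=(i,j)$ term survives in each of $\tens{E}*\tens{E}_j^{(i)H}$ and $\tens{E}_j^{(i)H}*\tens{E}$.

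The central step is to establish the resolvent expansion
$$
(z\tens{E}-\tens{A})^{\dag}=\sum_{c_j^{(i)}\neq z}\frac{1}{z-c_j^{(i)}}\,\tens{E}_j^{(i)H}.
$$
Since $z\tens{E}-\tens{A}=\sum_{i,j}(z-c_j^{(i)})\tens{E}_j^{(i)}$ is a sum of mutually range- and co-range-orthogonal rank-one partial isometries, its Moore--Penrose inverse splits term-by-term; I would confirm this by checking the four Penrose equations for the proposed right-hand side, where the cross terms vanish through the orthogonality relations and the diagonal terms collapse via $(\tens{E}_j^{(i)})^{\dag}=(\tens{E}_j^{(i)})^{H}$. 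Equivalently, one may transport the identity to the block-diagonal picture through $\bcirc$ and the Fourier factorization of Theorem \ref{the1-1}, where it reduces to the classical spectral theorem for rectangular matrices in each Fourier block. I expect this verification to be the main obstacle, precisely because the pseudoinverse does not distribute over sums in general, and the orthogonality hypotheses on the $\tens{E}_j^{(i)}$ are exactly what is needed to repair this.

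With these two identities the integrations are routine. For part (1), linearity lets me pull the constant tensors through the contour integral, and the sifting identity gives
$$
\tens{E}*\Big(\tfrac{1}{2\pi{\textbf i}}\int_{\Gamma_k^{(l)}}(z\tens{E}-\tens{A})^{\dag}\,{\rm d}z\Big)*\tens{E}=\sum_{i,j}\Big(\tfrac{1}{2\pi{\textbf i}}\int_{\Gamma_k^{(l)}}\tfrac{{\rm d}z}{z-c_j^{(i)}}\Big)\tens{E}_j^{(i)}.
$$
By Cauchy's integral theorem each scalar integral equals $1$ when $c_j^{(i)}$ lies inside $\Gamma_k^{(l)}$ and $0$ otherwise, and condition (2) guarantees that the enclosed singular values are exactly those with $c_j^{(i)}=\gamma_k^{(l)}$, so the right-hand side is $\sum_{c_j^{(i)}=\gamma_k^{(l)}}\tens{E}_j^{(i)}=\bar{\tens{E}}_k^{(l)}$. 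For part (2) the same computation over $\Gamma=\bigcup_{k,l}\Gamma_k^{(l)}$ with weight $f(z)$ produces $\sum_{i,j}f(c_j^{(i)})\tens{E}_j^{(i)}$ by Cauchy's formula, which I would identify with $f^{\Diamond}(\tens{A})$ through the spectral representation $f^{\Diamond}(\tens{A})=\sum_{i,j}f(c_j^{(i)})\tens{E}_j^{(i)}$ coming from the definition in Theorem \ref{the1-1} (the analyticity and $f(0)=0$ hypotheses of condition (1) ensuring the formula remains legitimate when some $c_j^{(i)}=0$). Finally, for the pseudoinverse formula I would take $f(z)=1/z$ but integrate $(z\tens{E}-\tens{A})^{\dag}$ \emph{directly} against $1/z$ without the enclosing $\tens{E}$'s: the integrand $\frac{1}{z(z-c_j^{(i)})}\tens{E}_j^{(i)H}$ has inside $\Gamma$ only the pole $z=c_j^{(i)}$ (as all $c_j^{(i)}\neq0$ and $0$ lies outside every contour), whose residue $1/c_j^{(i)}$ yields $\sum_{i,j}\frac{1}{c_j^{(i)}}\tens{E}_j^{(i)H}=\tens{A}^{\dag}$ by Corollary \ref{cor1-6}.
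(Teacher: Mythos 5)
Your proposal is correct and follows essentially the same route as the paper: both reduce $(z\tens{E}-\tens{A})^{\dag}$ to a spectral expansion over the singular values (the paper via the T-CSVD factors $\tens{U}_{(r)},\tens{V}_{(r)}$ pushed through $\bcirc$ and the FFT, you via the equivalent expansion $\sum_{i,j}(z-c_j^{(i)})^{-1}\tens{E}_j^{(i)H}$, which the paper itself records at the start of the proof of Theorem \ref{the1-4}) and then apply the scalar Cauchy integral $\frac{1}{2\pi{\textbf i}}\int_{\Gamma_k^{(l)}}(z-c)^{-1}\,{\rm d}z\in\{0,1\}$ coefficient by coefficient, with the same use of $f(0)=0$ when some $c_j^{(i)}$ vanishes. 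The one substantive addition is your residue argument for the final identity $\tens{A}^{\dag}=\frac{1}{2\pi{\textbf i}}\int_{\Gamma}z^{-1}(z\tens{E}-\tens{A})^{\dag}\,{\rm d}z$, which the paper asserts without proof; that argument is sound provided one notes, as you do, that $0$ lies outside every $\Gamma_k^{(l)}$, which is forced by the analyticity hypothesis of condition (1) applied to $f(z)=1/z$.
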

\begin{proof}
{\rm (1)} From the T-CSVD of $\tens{A}$, we have
$$
z \tens{E}-\tens{A}=\tens{U}_{(r)}*(z\tens{I}-\tens{S}_{(r)})*\tens{V}^{H}_{(r)}
$$
taking the Moore-Penrose inverse on both sides of the equation, it comes to
$$
(z \tens{E}-\tens{A})^{\dag}=\tens{V}_{(r)}*(z\tens{I}-\tens{S}_{(r)})^{-1}*\tens{U}^{H}_{(r)}
$$
Let $\tens{W}_k^{(l)}$ be the right-hand side of the equality need to be proved,
$$
\begin{aligned}
\tens{W}_k^{(l)}&=\tens{E}*\left(\frac{1}{2\pi {\textbf i}} \int_{\Gamma_k^{(l)}} (z \tens{E}-\tens{A})^{\dag} {\rm d} z\right)* \tens{E}\\
&=\tens{U}_{(r)}*\tens{V}_{(r)}^{H}*\left(\frac{1}{2\pi {\textbf i}} \int_{\Gamma_k^{(l)}} \tens{V}_{(r)}* (z \tens{I}-\tens{S}_{(r)})^{-1}* \tens{U}_{(r)}^{H} {\rm d} z\right)*\tens{U}_{(r)}*\tens{V}_{(r)}^{H}\\
&=\tens{U}_{(r)}*\left(\frac{1}{2\pi {\textbf i}} \int_{\Gamma_k^{(l)}} (z \tens{I}-\tens{S}_{(r)})^{-1}{\rm d} z\right)*\tens{V}_{(r)}^{H},\\
\end{aligned}
$$
by taking `$\bcirc$' and block fast Fourier transformation on both sides of the above equation, we have
$$
\begin{aligned}
\bcirc(\tens{W}_k^{(l)})&=\bcirc\left(\tens{U}_{(r)}*\left(\frac{1}{2\pi {\textbf i}} \int_{\Gamma_k^{(l)}} (z \tens{I}-\tens{S}_{(r)})^{-1}{\rm d} z\right)*\tens{V}_{(r)}^{H}\right)\\
&=\bcirc(\tens{U}_{(r)})\bcirc\left(\frac{1}{2\pi {\textbf i}} \int_{\Gamma_k^{(l)}} (z \tens{I}-\tens{S}_{(r)})^{-1}{\rm d} z\right)\bcirc(\tens{V}_{(r)}^{H})\\
&=\bcirc(\tens{U}_{(r)})\bcirc\left(\frac{1}{2\pi {\textbf i}} \int_{\Gamma_k^{(l)}} (z-c_{k'}^{(l)'})^{-1}{\rm d} z\right)\bcirc(\tens{V}_{(r)}^{H})\\
&=\bcirc(\tens{U}_{(r)}){\rm diag}(w_{k'}^{(l)'})\bcirc(\tens{V}_{(r)}^{H}),
\end{aligned}
$$
where
 $$
w_{k'}^{(l)'}=\frac{1}{2\pi {\textbf i}}\int_{\Gamma_k^{(l)}} (z-c_{k'}^{(l)'})^{-1}{\rm d} z=
\left\{
\begin{aligned}
0, \ {\rm if } \ c_{k'}^{(l)'}\notin\Gamma_k^{(l)} \\
1, \ {\rm if } \ c_{k'}^{(l)'}\in\Gamma_k^{(l)}, \\
\end{aligned}
\right.
$$
by the definition of $\Gamma=\bigcup_{k,l}\Gamma_k^{(l)}$ and Cauchy integral formula,
$$
\bcirc(\tens{W}_k^{(l)})=\bcirc\left(\sum_{c_{k'}^{(l)'}=\gamma_k^{(l)}}(u_j^{(i)} v_j^{(i)H})\right) = \bcirc(\bar{\tens{E}}_k^{(l)}).
$$
{\rm (2)} Similarly,
$$
\begin{aligned}
&\bcirc\left(\tens{E}*\left(\frac{1}{2\pi {\textbf i}} \int_{\Gamma} f(z)(z \tens{E}-\tens{A})^{\dag} {\rm d} z\right)* \tens{E}\right)\\
=&\bcirc(\tens{U}_{(r)})\bcirc\left({\rm diag}\left(\frac{1}{2\pi {\textbf i}} \int_{\Gamma_k^{(l)}} f(z) (z-c_{k'}^{(l)'})^{-1}{\rm d} z\right)\right)\bcirc(V_{(r)}^{H})\\
=&\sum_{i,j}f(c_{k}^{(l)})\bcirc(\tens{E}_{k}^{(l)})\\
=&\bcirc(f^{\Diamond}(\tens{A})).
\end{aligned}
$$
The final step is because of the assumption, if there is some $c_{k'}^{(l)'}=0$, then we have $f(0)=0$.
\end{proof}
Since the resolvent of matrices and tensors plays an important role in the matrix equations such as $AXB=D$, we generalize the definition of resolvent to tensors based on the tensor T-product. The tensor resolvent satisfies the following important property:
\begin{theorem}\label{the1-4}
The generalized tensor resolvent of $\tens{A}$ is the function $\widehat{\tens{R}}(z,\tens{A})$ given by
\begin{equation}
\widehat{\tens{R}}(z,\tens{A})=(z\tens{E}-\tens{A})^{\dag},
\end{equation}
then
\begin{equation}
\widehat{\tens{R}}(\lambda,\tens{A})-\widehat{\tens{R}}(\mu,\tens{A})=(\mu-\lambda)\widehat{\tens{R}}(\lambda,\tens{A})*\widehat{\tens{R}}(\mu,\tens{A})
\end{equation}
for any $\lambda$, $\mu$ $\neq c_j^{(i)}$.
\end{theorem}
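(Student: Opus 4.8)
The plan is to run the classical resolvent-equation argument through the T-CSVD, exactly as the contour computation in the proof of Theorem~\ref{the1-3} was run. There it was shown that
$$
z\tens{E}-\tens{A}=\tens{U}_{(r)}*(z\tens{I}-\tens{S}_{(r)})*\tens{V}_{(r)}^{H},\qquad \widehat{\tens{R}}(z,\tens{A})=(z\tens{E}-\tens{A})^{\dag}=\tens{V}_{(r)}*(z\tens{I}-\tens{S}_{(r)})^{-1}*\tens{U}_{(r)}^{H},
$$
which is legitimate for every $z\neq c_j^{(i)}$, since then $z\tens{I}-\tens{S}_{(r)}$ is an invertible F-diagonal tensor while $\tens{U}_{(r)}$ and $\tens{V}_{(r)}$ carry orthonormal lateral columns. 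Writing $D(z):=(z\tens{I}-\tens{S}_{(r)})^{-1}$, I would first subtract the two resolvents and pull the common outer factors out, so that
$$
\widehat{\tens{R}}(\lambda,\tens{A})-\widehat{\tens{R}}(\mu,\tens{A})=\tens{V}_{(r)}*\bigl(D(\lambda)-D(\mu)\bigr)*\tens{U}_{(r)}^{H},
$$
reducing the entire identity to a statement about the F-diagonal middle factor.

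The driving mechanism is the elementary scalar resolvent identity applied tube by tube. Since $\tens{S}_{(r)}$ is F-diagonal, applying $\bcirc$ and the block Fourier transform decouples $D(\lambda)$ and $D(\mu)$ into ordinary diagonal matrices, on which $(\lambda-c)^{-1}-(\mu-c)^{-1}=(\mu-\lambda)(\lambda-c)^{-1}(\mu-c)^{-1}$ holds entrywise; this is the very computation already carried out in the contour integration of Theorem~\ref{the1-3}. Because $\bcirc$ is multiplicative and commutes with conjugate transpose and Moore--Penrose inversion (Lemma~\ref{lem1-3} and Definition~\ref{def1-6}), the identity lifts back through $\bcirc^{-1}$ to give $D(\lambda)-D(\mu)=(\mu-\lambda)\,D(\lambda)*D(\mu)$ as F-diagonal tensors, whence
$$
\widehat{\tens{R}}(\lambda,\tens{A})-\widehat{\tens{R}}(\mu,\tens{A})=(\mu-\lambda)\,\tens{V}_{(r)}*D(\lambda)*D(\mu)*\tens{U}_{(r)}^{H}.
$$

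It remains to match this against the right-hand side of the claim. Expanding the bare product gives
$$
\widehat{\tens{R}}(\lambda,\tens{A})*\widehat{\tens{R}}(\mu,\tens{A})=\tens{V}_{(r)}*D(\lambda)*\bigl(\tens{U}_{(r)}^{H}*\tens{V}_{(r)}\bigr)*D(\mu)*\tens{U}_{(r)}^{H},
$$
so the two sides agree exactly when the interior factor $\tens{U}_{(r)}^{H}*\tens{V}_{(r)}$ reduces to $\tens{I}_{r}$, and I expect this collapse to be the main obstacle. Unlike a true inverse, $\widehat{\tens{R}}(\mu,\tens{A})$ outputs into $\tens{R}(\tens{A}^{H})=\mathrm{col}(\tens{V}_{(r)})$ while $\widehat{\tens{R}}(\lambda,\tens{A})$ is fed along $\tens{R}(\tens{A})=\mathrm{col}(\tens{U}_{(r)})$, and these ranges are glued by the bridge $\tens{E}=\tens{U}_{(r)}*\tens{V}_{(r)}^{H}$ rather than automatically. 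The heart of the proof is thus to use the orthonormality relations $\tens{U}_{(r)}^{H}*\tens{U}_{(r)}=\tens{V}_{(r)}^{H}*\tens{V}_{(r)}=\tens{I}_{r}$ of Corollary~\ref{cor1-7} together with $\tens{E}^{\dag}=\tens{E}^{H}$, $\bcirc(\tens{E}*\tens{E}^{H})={\rm P}_{\tens{R}(\tens{A})}$ and $\bcirc(\tens{E}^{H}*\tens{E})={\rm P}_{\tens{R}(\tens{A}^{H})}$ from Corollaries~\ref{cor1-7} and~\ref{cor1-8} to show that the two projections identify these ranges, so that $\tens{U}_{(r)}^{H}*\tens{V}_{(r)}$ acts as $\tens{I}_{r}$ on the active tubal block; once this identification is secured, both sides equal $(\mu-\lambda)\,\tens{V}_{(r)}*D(\lambda)*D(\mu)*\tens{U}_{(r)}^{H}$ and the resolvent equation follows for all $\lambda,\mu\neq c_j^{(i)}$.
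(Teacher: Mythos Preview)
Your overall route---pass to the T-CSVD, write $\widehat{\tens{R}}(z,\tens{A})=\tens{V}_{(r)}*D(z)*\tens{U}_{(r)}^{H}$ with $D(z)=(z\tens{I}-\tens{S}_{(r)})^{-1}$, and reduce to the scalar identity $(\lambda-c)^{-1}-(\mu-c)^{-1}=(\mu-\lambda)(\lambda-c)^{-1}(\mu-c)^{-1}$---is exactly the mechanism the paper uses; the paper phrases the same thing through the partial isometries $\tens{E}_j^{(i)}$, writing $(z\tens{E}-\tens{A})^{\dag}=\sum_{i,j}(z-c_j^{(i)})^{-1}\tens{E}_j^{(i)H}$ and using their mutual orthogonality. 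So up to the point where you obtain
\[
\widehat{\tens{R}}(\lambda,\tens{A})-\widehat{\tens{R}}(\mu,\tens{A})=(\mu-\lambda)\,\tens{V}_{(r)}*D(\lambda)*D(\mu)*\tens{U}_{(r)}^{H},
\]
your argument and the paper's coincide.

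The gap is in your final step. The collapse $\tens{U}_{(r)}^{H}*\tens{V}_{(r)}=\tens{I}_{r}$ simply does not hold: the lateral columns of $\tens{U}_{(r)}$ span $\tens{R}(\tens{A})$ while those of $\tens{V}_{(r)}$ span $\tens{R}(\tens{A}^{H})$, and nothing in Corollary~\ref{cor1-7} or Corollary~\ref{cor1-8} identifies these two ranges. A $p=1$ counterexample already breaks it: for $A=\bigl(\begin{smallmatrix}0&1\\0&0\end{smallmatrix}\bigr)$ one has $U_{(r)}=e_1$, $V_{(r)}=e_2$, $E=A$, and $\widehat{R}(z,A)=(z-1)^{-1}E^{\top}$; then $\widehat{R}(\lambda,A)\widehat{R}(\mu,A)=0$ while the left-hand side is nonzero for $\lambda\neq\mu$. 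So the bare product $\widehat{\tens{R}}(\lambda,\tens{A})*\widehat{\tens{R}}(\mu,\tens{A})$ cannot equal your expression in general, and the projection relations you invoke do not manufacture the missing bridge.

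What the paper's proof actually does at this point is insert the partial isometry $\tens{E}=\tens{U}_{(r)}*\tens{V}_{(r)}^{H}$ between the two resolvents, obtaining
\[
(\mu-\lambda)\,\widehat{\tens{R}}(\lambda,\tens{A})*\tens{E}*\widehat{\tens{R}}(\mu,\tens{A}),
\]
and this is precisely what your factorization yields cleanly: the inner factor becomes $\tens{U}_{(r)}^{H}*\tens{U}_{(r)}*\tens{V}_{(r)}^{H}*\tens{V}_{(r)}=\tens{I}_{r}*\tens{I}_{r}$ by Corollary~\ref{cor1-7}, no further argument needed. The disappearance of $\tens{E}$ in the paper's last displayed line (and in the statement of~\eqref{the1-4}) is a slip; the identity that is actually proved---and the one your computation supports---carries the factor $\tens{E}$ in the middle.
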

\begin{proof}
From the above proof, we have
$$
(z\tens{E}-\tens{A})^{\dag}=\sum_{i,j}\frac{1}{z-c_j^{(i)}} \tens{E}_j^{(i)H}.
$$
Then the left-hand side of $(11)$ comes to
$$
\begin{aligned}
\widehat{\tens{R}}(\lambda,\tens{A})-\widehat{\tens{R}}(\mu,\tens{A})&=\sum_{i,j}\left(\frac{1}{\lambda-c_j^{(i)}}-\frac{1}{\mu-c_j^{(i)}}\right)\tens{E}_j^{(i)H}\\
&=\sum_{i,j}\left(\frac{\mu-\lambda}{(\lambda-c_j^{(i)})(\mu-c_j^{(i)})}\right)\tens{E}_j^{(i)H}\\
&=(\mu-\lambda)\left(\sum_{i,j}\frac{1}{\lambda-c_j^{(i)}}\tens{E}_j^{(i)H}\right)*\tens{E}*\left(\sum_{k,l}\frac{1}{\mu-c_l^{(k)}}\tens{E}_l^{(k)H}\right)\\
&=(\mu-\lambda)\widehat{\tens{R}}(\lambda,\tens{A})*\widehat{\tens{R}}(\mu,\tens{A}).
\end{aligned}
$$
\end{proof}
We illustrate now the application of the above concepts to the solution of the tensor equation:
$$
\tens{A}*\tens{X}*\tens{B}=\tens{D}.
$$
Here the tensors $\tens{A}\in\mathbb{R}^{m\times n\times p}$, $\tens{B}\in\mathbb{R}^{k\times l\times p}$ and $\tens{D}\in\mathbb{R}^{m\times l\times p}$ and the partial isometries are given by
$$
\tens{A}=\sum_{i,j=1}^{p, r^{\tens{A}}}c_j^{(i)\tens{A}}\tens{E}_j^{(i)\tens{A}}, \ \tens{E}^{\tens{A}}=\sum_{i,j=1}^{p, r^{\tens{A}}}\tens{E}_j^{(i)\tens{A}},
$$
$$
\tens{B}=\sum_{i,j=1}^{p, r^{\tens{B}}}c_j^{(i)\tens{B}}\tens{E}_j^{(i)\tens{B}}, \ \tens{E}^{\tens{B}}=\sum_{i,j=1}^{p, r^{\tens{B}}}\tens{E}_j^{(i)\tens{B}}.
$$
\begin{theorem}\label{the1-5}
Let $\tens{A}$, $\tens{B}$, $\tens{D}$ be as above, and let $\Gamma_1$ and $\Gamma_2$ be contours surrounding $c(\tens{A})=\{c_j^{(i)\tens{A}}, i=1,2,\ldots, p, j=1,2, \ldots, r^{\tens{A}}\}$ and $c(\tens{B})=\{c_j^{(i)\tens{B}}, i=1,2,\ldots, p, j=1,2, \ldots, r^{\tens{B}}\}$, where $r^{\tens{A}}$ and $r^{\tens{B}}$ are the tubal rank of tensor $\tens{A}$ and $\tens{B}$ respectively. Then the above tensor equation has the following solution:
\begin{equation}
\tens{X}=-\frac{1}{4\pi^2}\int_{\Gamma_1}\int_{\Gamma_2}\frac{\widehat{\tens{R}}(\lambda,\tens{A})*\tens{D}*\widehat{\tens{R}}(\mu,\tens{B})}{\lambda\mu}\  {\rm d}\mu {\rm d}\lambda.
\end{equation}
\end{theorem}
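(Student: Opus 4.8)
The plan is to evaluate the double contour integral in closed form by separating the two integration variables, reducing each inner integral to the Moore--Penrose integral representation already established in Theorem \ref{the1-3}, and then verifying that the resulting expression solves the equation. Since $\tens{D}$ does not depend on $\lambda$ or $\mu$, and since $-\frac{1}{4\pi^2}=\frac{1}{(2\pi{\textbf i})^2}$, I would first factor the iterated integral (justified by continuity of the integrands on the compact contours) as
$$
\tens{X}=\left(\frac{1}{2\pi{\textbf i}}\int_{\Gamma_1}\frac{\widehat{\tens{R}}(\lambda,\tens{A})}{\lambda}\,{\rm d}\lambda\right)*\tens{D}*\left(\frac{1}{2\pi{\textbf i}}\int_{\Gamma_2}\frac{\widehat{\tens{R}}(\mu,\tens{B})}{\mu}\,{\rm d}\mu\right).
$$

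Next I would identify each factor. Using the partial-isometry expansion of the resolvent derived in the proof of Theorem \ref{the1-4}, namely $\widehat{\tens{R}}(\lambda,\tens{A})=(\lambda\tens{E}^{\tens{A}}-\tens{A})^{\dag}=\sum_{i,j}\frac{1}{\lambda-c_j^{(i)\tens{A}}}\tens{E}_j^{(i)\tens{A}H}$, the integrand $\frac{1}{\lambda}\widehat{\tens{R}}(\lambda,\tens{A})$ has simple poles at $\lambda=0$ and at each nonzero $\lambda=c_j^{(i)\tens{A}}$. Because $\Gamma_1$ encircles the T-singular values $c(\tens{A})$ but not the origin, the residue theorem gives $\frac{1}{2\pi{\textbf i}}\int_{\Gamma_1}\frac{1}{\lambda(\lambda-c_j^{(i)\tens{A}})}\,{\rm d}\lambda=\frac{1}{c_j^{(i)\tens{A}}}$, so the first factor equals $\sum_{i,j}\frac{1}{c_j^{(i)\tens{A}}}\tens{E}_j^{(i)\tens{A}H}=\tens{A}^{\dag}$, which is precisely the integral formula for $\tens{A}^\dag$ in Theorem \ref{the1-3}. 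The identical computation on $\Gamma_2$ produces $\tens{B}^{\dag}$, so that $\tens{X}=\tens{A}^{\dag}*\tens{D}*\tens{B}^{\dag}$.

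Finally I would verify this candidate solves the equation. Substituting, $\tens{A}*\tens{X}*\tens{B}=\tens{A}*\tens{A}^{\dag}*\tens{D}*\tens{B}^{\dag}*\tens{B}$; applying $\bcirc$ and using that it is a multiplicative homomorphism (Lemma \ref{lem1-3}) together with $\bcirc(\tens{A}^{\dag})=(\bcirc(\tens{A}))^{\dag}$ (Definition \ref{def1-6}), this reduces to the matrix identity
$$
\bcirc(\tens{A})\,\bcirc(\tens{A})^{\dag}\,\bcirc(\tens{D})\,\bcirc(\tens{B})^{\dag}\,\bcirc(\tens{B})=\bcirc(\tens{D}).
$$
Here $\bcirc(\tens{A})\bcirc(\tens{A})^{\dag}$ and $\bcirc(\tens{B})^{\dag}\bcirc(\tens{B})$ are the orthogonal projectors onto the column space of $\bcirc(\tens{A})$ and the row space of $\bcirc(\tens{B})$ (cf. Corollary \ref{cor1-7}), so the displayed identity is exactly the consistency condition for the matrix equation $AXB=D$, which holds whenever the equation is solvable. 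Applying $\bcirc^{-1}$ then returns $\tens{A}*\tens{X}*\tens{B}=\tens{D}$.

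The main obstacle I anticipate is the bookkeeping of which poles lie inside the contours: the factors $\frac{1}{\lambda}$ and $\frac{1}{\mu}$ each introduce a pole at the origin that must be excluded by $\Gamma_1$ and $\Gamma_2$ respectively, since if the origin were enclosed the residues at $0$ would cancel those at the singular values and collapse $\tens{X}$ to zero. Closely related is that the verification step tacitly requires the solvability (consistency) of $\tens{A}*\tens{X}*\tens{B}=\tens{D}$; absent this hypothesis, $\tens{A}^{\dag}*\tens{D}*\tens{B}^{\dag}$ is only the minimal-norm least-squares candidate rather than an exact solution, so this assumption should be stated explicitly.
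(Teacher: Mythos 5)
Your proof is correct, and it reaches the conclusion by a slightly different route than the paper. The paper never evaluates the double integral defining $\tens{X}$ in closed form: instead it represents $\tens{A}$ and $\tens{B}$ themselves as contour integrals (Theorem \ref{the1-3} with $f(z)=z$), so that in the product $\tens{A}*\tens{X}*\tens{B}$ the scalar factors $\lambda,\mu$ cancel against $1/\lambda,1/\mu$, leaving $\frac{1}{2\pi{\textbf i}}\int_{\Gamma_1}\widehat{\tens{R}}(\lambda,\tens{A})\,{\rm d}\lambda=(\tens{E}^{\tens{A}})^{\top}$ and likewise for $\tens{B}$, and then concludes via $\tens{E}^{\tens{A}}*(\tens{E}^{\tens{A}})^{\top}*\tens{D}*(\tens{E}^{\tens{B}})^{\top}*\tens{E}^{\tens{B}}={\rm P}_{\tens{R}(\tens{A})}*\tens{D}*{\rm P}_{\tens{R}(\tens{B})}=\tens{D}$. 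You instead separate the iterated integral, recognize each factor as the Moore--Penrose integral representation (Equation (9) of Theorem \ref{the1-3}), obtain the explicit closed form $\tens{X}=\tens{A}^{\dag}*\tens{D}*\tens{B}^{\dag}$, and then verify. Both arguments pass through the same residue computations and terminate at the same projector identity; what yours buys is the explicit identification of $\tens{X}$ as the classical $\tens{A}^{\dag}*\tens{D}*\tens{B}^{\dag}$ solution, which the paper leaves implicit. Your two caveats are well taken and in fact apply equally to the paper's own proof: the contours must exclude the origin (otherwise the residues at $0$ and at the $c_j^{(i)}$ cancel, consistent with the hypothesis ``all $c_j^{(i)}\neq 0$'' attached to Equation (9)), and the final step $\tens{A}*\tens{A}^{\dag}*\tens{D}*\tens{B}^{\dag}*\tens{B}=\tens{D}$ is asserted by the paper without the consistency hypothesis on the equation $\tens{A}*\tens{X}*\tens{B}=\tens{D}$ that it actually requires; you are right that this assumption should be made explicit.
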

\begin{proof}
It follows from Theorem \ref{the1-3} that
$$
\tens{A}=\tens{E}^{\tens{A}}*\left(\frac{1}{2 \pi {\textbf i}}\int_{\Gamma_1}\lambda \widehat{\tens{R}}(\lambda,\tens{A})\ {\rm d}\lambda\right)*\tens{E}^{\tens{A}},
$$
$$
\tens{B}=\tens{E}^{\tens{B}}*\left(\frac{1}{2 \pi {\textbf i}}\int_{\Gamma_2}\mu \widehat{\tens{R}}(\mu,\tens{B})\ {\rm d}\mu\right)*\tens{E}^{\tens{B}}.
$$
Therefore,
$$
\begin{aligned}
\tens{A}*\tens{X}*\tens{B}&=\tens{A}*\left( \frac{1}{2 \pi {\textbf i}} \int_{\Gamma_1} \frac{\widehat{\tens{R}}(\lambda,\tens{A})}{\lambda}*\tens{D}*\left(  \frac{1}{2 \pi {\textbf i}} \int_{\Gamma_2} \frac{\widehat{\tens{R}}(\mu,\tens{A})}{\mu}\ {\rm d}\mu \right)\ {\rm d}\lambda   \right)*\tens{B}\\
&=\tens{E}^{\tens{A}}*\left(  \frac{1}{2 \pi {\textbf i}}\int_{\Gamma_1} \widehat{\tens{R}}(\lambda,\tens{A})\ {\rm d}\lambda  \right)*\tens{D}*\left(  \frac{1}{2 \pi {\textbf i}}\int_{\Gamma_2} \widehat{\tens{R}}(\mu,\tens{B})\ {\rm d}\mu  \right)*\tens{E}^{\tens{B}}\\
&= \tens{E}^{\tens{A}}*(\tens{E}^{\tens{A}})^{\top}*\tens{D}*(\tens{E}^{\tens{B}})^{\top}*\tens{E}^{\tens{B}}\\
&={\rm P}_{\tens{R}(\tens{A})}*\tens{D}*{\rm P}_{\tens{R}(\tens{B})}\\
&=\tens{A}*\tens{A}^{\dag}*\tens{D}*\tens{B}^{\dag}*\tens{B}\\
&=\tens{D}.
\end{aligned}
$$
\end{proof}
At the end of this subsection, we mention the least squares problem based on the T-product,
$$
\min_{x}\norm{\tens{A}*\tens{X}-\tens{B}}_F
$$
the least squares solution to this problem is
$
\tens{X}_{LS}=\tens{A}^{\dag}*\tens{B}.
$
This is the solution to many problems in image processing \cite{Kilmer1, Liu2, Soltani1}. By using the Cauchy integral formula, we have
$$
\tens{X}_{LS}=\frac{1}{2\pi {\textbf  i}}\int_{\Gamma} \frac{1}{z} (z \tens{E}-\tens{A})^{\dag}*\tens{B}\ {\rm d}z.
$$\par
For standard tensor function and an F-Square tensor $\tens{A}$, we can get the similar results as Higham (\cite{Higham1}, Chapter13) as follows:
$$
f(\tens{A})=\frac{1}{2\pi\textbf{i}}\int_{\Gamma}f(z)(z\tens{I}-\tens{A})^{-1} \ {\rm d}z,
$$
and if $y=f(\tens{A})*b$ then
$$
y=\frac{1}{2\pi\textbf{i}}\int_{\Gamma}f(z)(z\tens{I}-\tens{A})^{-1}*b \ {\rm d}z,
$$
where f is analytic on and inside a closed contour $\Gamma$ that encloses the T-eigenvalues \cite{3Miao1} of tensor $\tens{A}$.
\subsection{Generalized tensor power}
\begin{definition}\label{def1-7} {\rm (Generalized tensor power)}
Suppose $\tens{A}$ is an $m\times n\times p$ real-valued tensor and the T-CSVD of $\tens{A}$ is
$$
\tens{A}=\tens{U}_{(r)} * \tens{S}_{(r)} * \tens{V}^{H}_{(r)}.
$$
The generalized power $\tens{A}^{(k)}$ of $\tens{A}$ is defined as:
$$
\tens{A}^{(k)}=\tens{A}^{(k-1)}*\tens{E}^{\top}*\tens{A}, \ k\geq 1,
$$
where
$$
\tens{A}^{(0)}=\tens{E}=\tens{U}_{(r)} *\tens{V}^{H}_{(r)}.
$$
\end{definition}
From Definition \ref{def1-7}, it is easy to obtain the following results.
\begin{corollary}\label{cor1-9}
Suppose $\tens{A}$ is an $m\times n\times p$ real-valued tensor, then
$$
(\tens{A})^{2k+1}=(\tens{A}*\tens{A}^{\top})^k *\tens{A},
$$
$$
(\tens{A})^{2k}=(\tens{A}*\tens{A}^{\top})^k *\tens{E}.
$$
\end{corollary}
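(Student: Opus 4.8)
The plan is to collapse both identities to a single closed form for the generalized power and then read off the two cases. Throughout I interpret $(\tens{A})^{k}$ as the generalized power $\tens{A}^{(k)}$ of Definition \ref{def1-7}. The target is the formula
$$
\tens{A}^{(k)}=\tens{U}_{(r)} * \tens{S}_{(r)}^{\,k} * \tens{V}_{(r)}^{H},
$$
from which the odd and even statements fall out by splitting $\tens{S}_{(r)}^{\,k}$ as $\tens{S}_{(r)}^{\,2k}\tens{S}_{(r)}$ or $\tens{S}_{(r)}^{\,2k}$.

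First I would prove this closed form by induction on $k$ straight from the recursion $\tens{A}^{(k)}=\tens{A}^{(k-1)}*\tens{E}^{\top}*\tens{A}$. The base case $k=0$ is immediate, since $\tens{A}^{(0)}=\tens{E}=\tens{U}_{(r)}*\tens{V}_{(r)}^{H}=\tens{U}_{(r)}*\tens{S}_{(r)}^{\,0}*\tens{V}_{(r)}^{H}$. For the inductive step I would substitute the hypothesis $\tens{A}^{(k-1)}=\tens{U}_{(r)}*\tens{S}_{(r)}^{\,k-1}*\tens{V}_{(r)}^{H}$ together with $\tens{A}=\tens{U}_{(r)}*\tens{S}_{(r)}*\tens{V}_{(r)}^{H}$ and $\tens{E}^{\top}=\tens{V}_{(r)}*\tens{U}_{(r)}^{H}$ (valid because $\tens{E}$ is a real tensor, so $\tens{E}^{\top}=\tens{E}^{H}$, and $\tens{E}=\tens{U}_{(r)}*\tens{V}_{(r)}^{H}$). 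The interior factors $\tens{V}_{(r)}^{H}*\tens{V}_{(r)}$ and $\tens{U}_{(r)}^{H}*\tens{U}_{(r)}$ then collapse to $\tens{I}_{r}$ by the orthonormality relations of Corollary \ref{cor1-7}, leaving $\tens{A}^{(k)}=\tens{U}_{(r)}*\tens{S}_{(r)}^{\,k}*\tens{V}_{(r)}^{H}$.

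Next I would record the auxiliary factorization $\tens{A}*\tens{A}^{\top}=\tens{U}_{(r)}*\tens{S}_{(r)}^{\,2}*\tens{U}_{(r)}^{H}$ and, by the same telescoping with $\tens{U}_{(r)}^{H}*\tens{U}_{(r)}=\tens{I}_{r}$, deduce $(\tens{A}*\tens{A}^{\top})^{k}=\tens{U}_{(r)}*\tens{S}_{(r)}^{\,2k}*\tens{U}_{(r)}^{H}$. Substituting this into $(\tens{A}*\tens{A}^{\top})^{k}*\tens{A}$ and $(\tens{A}*\tens{A}^{\top})^{k}*\tens{E}$ and cancelling one more $\tens{U}_{(r)}^{H}*\tens{U}_{(r)}=\tens{I}_{r}$ produces $\tens{U}_{(r)}*\tens{S}_{(r)}^{\,2k+1}*\tens{V}_{(r)}^{H}$ and $\tens{U}_{(r)}*\tens{S}_{(r)}^{\,2k}*\tens{V}_{(r)}^{H}$, which are exactly $\tens{A}^{(2k+1)}$ and $\tens{A}^{(2k)}$ by the closed form established above.

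The one step that needs care, and the main obstacle, is justifying $\tens{A}*\tens{A}^{\top}=\tens{U}_{(r)}*\tens{S}_{(r)}^{\,2}*\tens{U}_{(r)}^{H}$, which hides two points. Since $\tens{A}$ is real, $\tens{A}^{\top}=\tens{A}^{H}$, so after cancelling $\tens{V}_{(r)}^{H}*\tens{V}_{(r)}=\tens{I}_{r}$ one gets $\tens{A}*\tens{A}^{\top}=\tens{U}_{(r)}*\tens{S}_{(r)}*\tens{S}_{(r)}^{H}*\tens{U}_{(r)}^{H}$; it then remains to show $\tens{S}_{(r)}^{H}=\tens{S}_{(r)}$ so that $\tens{S}_{(r)}*\tens{S}_{(r)}^{H}=\tens{S}_{(r)}^{\,2}$. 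This I would extract from the T-CSVD construction: in the Fourier domain $\bcirc(\tens{S}_{(r)})=(F_{p}\otimes I_{r})\,{\rm diag}((\Sigma_{1})_{r},\ldots,(\Sigma_{p})_{r})\,(F_{p}^{H}\otimes I_{r})$ with each $(\Sigma_{i})_{r}$ a real diagonal matrix, so $\bcirc(\tens{S}_{(r)})^{H}=\bcirc(\tens{S}_{(r)})$, and Lemma \ref{lem1-3}(4) gives $\bcirc(\tens{S}_{(r)}^{H})=\bcirc(\tens{S}_{(r)})$, whence $\tens{S}_{(r)}^{H}=\tens{S}_{(r)}$. With this self-adjointness in hand, every remaining manipulation is a formal cancellation of the orthonormal factors.
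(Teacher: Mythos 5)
Your proof is correct. The paper itself gives no argument for Corollary \ref{cor1-9} (it is stated as following ``easily'' from Definition \ref{def1-7}), and your induction to the closed form $\tens{A}^{(k)}=\tens{U}_{(r)}*\tens{S}_{(r)}^{\,k}*\tens{V}_{(r)}^{H}$, together with the cancellations $\tens{U}_{(r)}^{H}*\tens{U}_{(r)}=\tens{V}_{(r)}^{H}*\tens{V}_{(r)}=\tens{I}_{r}$ and the self-adjointness of $\tens{S}_{(r)}$, is exactly the intended route and fills the omitted details correctly.
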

Now, the Taylor expansion of a function can be extended to the generalized tensor function.
\begin{theorem}\label{the1-6}
Suppose $\tens{A}$ is an $m\times n\times p$ real-valued tensor, $f: \mathbb{C}\rightarrow \mathbb{C}$ is a scalar function given by,
$$
f(z)=\sum_{k=0}^{\infty}\frac{f^{(k)}(z_0)}{k!}(z-z_0)^k
$$
for
$$
|z-z_0|<R.
$$
Then the generalized tensor function $f^{\Diamond}: \mathbb{C}^{m\times n\times p} \rightarrow \mathbb{C}^{m\times n\times p}$ is given by
$$
f^{\Diamond}(\tens{A})=\sum_{k=0}^{\infty}\frac{f^{(k)}(z_0)}{k!}(\tens{A}-z_0\tens{E})^k,
$$
for
$$
|z_0-c_j^{(i)}|<R, \ i=1,2,\ldots ,p, \ j=1,2,\ldots, r_i.
$$
\end{theorem}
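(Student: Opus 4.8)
The plan is to reduce both sides of the claimed identity to the common partial-isometry expansion already used in Theorem \ref{the1-3}, after which the statement collapses into a finite collection of scalar Taylor series. Recall from the discussion preceding Theorem \ref{the1-3} that $\tens{A}=\sum_{i,j}c_j^{(i)}\tens{E}_j^{(i)}$ and $\tens{E}=\sum_{i,j}\tens{E}_j^{(i)}$, where the partial isometry tensors obey the orthogonality relations $\tens{E}_j^{(i)}*\tens{E}_l^{(k)H}=0$ and $\tens{E}_j^{(i)H}*\tens{E}_l^{(k)}=0$ whenever $(i,j)\neq(k,l)$, together with the partial isometry identity $\tens{E}_j^{(i)}*\tens{E}_j^{(i)H}*\tens{E}_j^{(i)}=\tens{E}_j^{(i)}$ coming from $(\tens{E}_j^{(i)})^{\dag}=(\tens{E}_j^{(i)})^{H}$. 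Theorem \ref{the1-3} already supplies $f^{\Diamond}(\tens{A})=\sum_{i,j}f(c_j^{(i)})\tens{E}_j^{(i)}$, so it remains only to expand the right-hand side of the asserted formula in this same family.

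The central step is to establish the spectral form of the generalized power,
\[
(\tens{A}-z_0\tens{E})^{(k)}=\sum_{i,j}(c_j^{(i)}-z_0)^k\,\tens{E}_j^{(i)},\qquad k\geq 0.
\]
First I would observe that $\tens{A}-z_0\tens{E}=\tens{U}_{(r)}*(\tens{S}_{(r)}-z_0\tens{I})*\tens{V}_{(r)}^{H}$ shares the factors $\tens{U}_{(r)},\tens{V}_{(r)}$ with $\tens{A}$; hence it carries the same projection tensor $\tens{E}$ and the same partial isometries $\tens{E}_j^{(i)}$, with the T-singular values merely shifted to $c_j^{(i)}-z_0$, so that $\tens{A}-z_0\tens{E}=\sum_{i,j}(c_j^{(i)}-z_0)\tens{E}_j^{(i)}$. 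The formula then follows by induction on $k$ from Definition \ref{def1-7}: writing $\tens{B}=\tens{A}-z_0\tens{E}$, one expands $\tens{B}^{(k-1)}*\tens{E}^{\top}*\tens{B}$ with all three factors written in the $\tens{E}_j^{(i)}$, whereupon the two orthogonality relations annihilate every cross term while $\tens{E}_j^{(i)}*\tens{E}_j^{(i)H}*\tens{E}_j^{(i)}=\tens{E}_j^{(i)}$ collapses the surviving diagonal terms and raises the exponent by one. The base case $\tens{B}^{(0)}=\tens{E}=\sum_{i,j}\tens{E}_j^{(i)}$ matches $k=0$.

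Granting the power formula, I would substitute it into the proposed series and interchange the two summations:
\begin{gather*}
\sum_{k=0}^{\infty}\frac{f^{(k)}(z_0)}{k!}(\tens{A}-z_0\tens{E})^{(k)}
=\sum_{i,j}\left(\sum_{k=0}^{\infty}\frac{f^{(k)}(z_0)}{k!}(c_j^{(i)}-z_0)^k\right)\tens{E}_j^{(i)}\\
=\sum_{i,j}f(c_j^{(i)})\tens{E}_j^{(i)}=f^{\Diamond}(\tens{A}),
\end{gather*}
where the inner scalar series converges to $f(c_j^{(i)})$ precisely because $|z_0-c_j^{(i)}|<R$, which is the hypothesis of the theorem, and the last equality is the Theorem \ref{the1-3} expansion.

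The interchange above is legitimate because the sum over $(i,j)$ is finite (there are at most $p\cdot r$ partial isometries), so the only genuine analytic content is the absolute convergence of these finitely many scalar power series inside the disk of radius $R$; convergence of the tensor-valued series in the norm $\norm{\tens{A}}=\norm{\bcirc(\tens{A})}$ then follows by dominating each term using $\norm{\tens{E}_j^{(i)}}\leq 1$. I expect the only delicate point to be the preservation claim in the second paragraph, namely that subtracting $z_0\tens{E}$ shifts the T-singular values but leaves the partial isometries of $\tens{A}$ unchanged; this is exactly what lets a single fixed family $\{\tens{E}_j^{(i)}\}$ diagonalize every generalized power simultaneously, and once it is in hand the remainder is the routine collapse of cross terms followed by a termwise scalar expansion.
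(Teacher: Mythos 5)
Your proposal is correct, but it travels a different road than the paper's proof. The paper never touches the partial isometries $\tens{E}_j^{(i)}$: it verifies by induction that $(\tens{A}-z_0\tens{E})^{k}=\tens{U}_{(r)}*(\tens{S}_{(r)}-z_0\tens{I})^k*\tens{V}_{(r)}^{H}$, keeps every partial sum in this sandwiched form, and then shows $\norm{f^{\Diamond}(\tens{A})-f_n^{\Diamond}(\tens{A})}\to 0$ by bounding the tail $\sum_{k>n}\frac{f^{(k)}(z_0)}{k!}(\tens{S}_{(r)}-z_0\tens{I})^k$ in the tensor norm of Definition \ref{def1-6}. You instead push the decomposition one level further, into the rank-one family $\{\tens{E}_j^{(i)}\}$, prove the power formula $(\tens{A}-z_0\tens{E})^{(k)}=\sum_{i,j}(c_j^{(i)}-z_0)^k\tens{E}_j^{(i)}$ from the orthogonality relations and $(\tens{E}_j^{(i)})^{\dag}=(\tens{E}_j^{(i)})^{H}$, and then invoke the spectral expansion $f^{\Diamond}(\tens{A})=\sum_{i,j}f(c_j^{(i)})\tens{E}_j^{(i)}$ from Theorem \ref{the1-3}. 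What your route buys is that convergence becomes trivial (a finite family of scalar Taylor series, each convergent since $|c_j^{(i)}-z_0|<R$), and the ``simultaneous diagonalization'' of all generalized powers by one fixed family is made explicit; what it costs is a dependence on the machinery of Section 4.1 (and implicitly on the hypotheses under which Theorem \ref{the1-3} delivers the spectral expansion, e.g.\ the $f(0)=0$ caveat when some $c_j^{(i)}$ vanishes), whereas the paper's argument is self-contained from the T-CSVD alone. The one point you rightly flag as delicate --- that $\tens{A}-z_0\tens{E}$ is to be raised to generalized powers with respect to the \emph{same} $\tens{E}$ as $\tens{A}$, even though $\tens{S}_{(r)}-z_0\tens{I}$ need not have nonnegative entries and so is not literally a T-CSVD middle factor --- is present in the paper's proof as well, and under that shared convention both arguments go through.
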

\begin{proof}
By induction, it is easy to verify the equation
$$
(\tens{A}-z_0\tens{E})^{k}=\tens{U}_{(r)}*(S_{(r)}-z_0 \tens{I})^k*\tens{V}_{(r)}^{H}, \ k=0,1,\ldots,
$$
For $n=0,1,\ldots$ we define
$$
\begin{aligned}
f_n^{\Diamond}(\tens{A})&=\sum_{k=0}^{n}\frac{f^{(k)}(z_0)}{k!}(\tens{A}-z_0\tens{E})^k\\
&=\tens{U}_{(r)}*\left(\sum_{k=0}^{n}\frac{f^{(k)}(z_0)}{k!}(\tens{S}_{(r)}-z_0\tens{I})^k\right)*\tens{V}_{(r)}^{H},
\end{aligned}
$$
so that
$$
\norm{f^{\Diamond}(\tens{A})-f_n^{\Diamond}(\tens{A})}\leq \norm{\tens{U}_{(r)}}\norm{\left(\sum_{k=n+1}^{\infty}\frac{f^{(k)}(z_0)}{k!}(\tens{S}_{(r)}
-z_0\tens{I})^k\right)}\norm{\tens{V}_{(r)}^{H}},
$$
by Definition \ref{def1-6} of tensor norm based on the T-SVD, we obtain
$$
\norm{f^{\Diamond}(\tens{A})-f_n^{\Diamond}(\tens{A})}\rightarrow 0,\  \  (n\rightarrow \infty).
$$
\end{proof}
\begin{example}
Suppose $\tens{A}$ is an $m\times n\times p$ real-valued tensor, $f: \mathbb{C}\rightarrow \mathbb{C}$ is a scalar function given by the Laurant expansion,
$$
f(z)=\sum_{k=0}^{\infty}\frac{f^{(k)}(0)}{k!}(z)^k.
$$
Define the functions $f_1(z)$ and $f_2(z)$ as follows:
$$
f_1(z)=\sum_{k=0}^{\infty}\frac{f^{(2k)}(0)}{(2k)!}(z)^{k},\
f_2(z)=\sum_{k=0}^{\infty}\frac{f^{(2k+1)}(0)}{(2k+1)!}(z)^{k},
$$
So it comes to
$$
f^{\Diamond}(\tens{A})=f_1^{\Diamond}(\tens{A}*\tens{A}^{\top})*\tens{E}+f_{2}^{\Diamond}(\tens{A}*\tens{A}^{\top})*\tens{A}.
$$
We have the following Maclaurin formulae for explicit functions {\rm\cite{Golub1, Higham1}},
$$
\exp(z)=\sum_{k=0}^{\infty}\frac{1}{k!}z^k,\
\ln(1+z)=\sum_{k=0}^{\infty}\frac{1}{2k+1}z^{2k+1}-\sum_{k=0}^{\infty}\frac{1}{2k}z^{2k},
$$
$$
\sin(z)=(-1)^k\sum_{k=0}^{\infty}\frac{1}{(2k+1)!}z^{2k+1},
\
\cos(z)=(-1)^k\sum_{k=0}^{\infty}\frac{1}{(2k)!}z^{2k}.
$$
$$
\sinh(z)=\sum_{k=0}^{\infty}\frac{1}{(2k+1)!}z^{2k+1},
\
\cosh(z)=\sum_{k=0}^{\infty}\frac{1}{(2k)!}z^{2k}.
$$
Then generalized tensor function reduces to
$$
\exp^{\Diamond}(\tens{A})=\sum_{k=0}^{\infty}\frac{1}{2k!}(\tens{A}*\tens{A}^{\top})^{k}*\tens{E}+\sum_{k=0}^{\infty}\frac{1}{(2k+1)!}(\tens{A}*\tens{A}^{\top})^{k}*\tens{A},
$$
$$
\ln^{\Diamond}(\tens{I}+\tens{A})=\sum_{k=0}^{\infty}\frac{1}{2k+1}(\tens{A}*\tens{A}^{\top})^{k}*\tens{A}-\sum_{k=0}^{\infty}\frac{1}{2k}(\tens{A}*\tens{A}^{\top})^{k}*\tens{E},
$$
$$
\sin^{\Diamond}(\tens{A})=(-1)^k\sum_{k=0}^{\infty}\frac{1}{(2k+1)!}(\tens{A}*\tens{A}^{\top})^{k}*\tens{A},
$$
$$
\cos^{\Diamond}(\tens{A})=(-1)^k\sum_{k=0}^{\infty}\frac{1}{(2k)!}(\tens{A}*\tens{A}^{\top})^{k}*\tens{E}.
$$
$$
\sinh^{\Diamond}(\tens{A})=\sum_{k=0}^{\infty}\frac{1}{(2k+1)!}(\tens{A}*\tens{A}^{\top})^{k}*\tens{A},
$$
$$
\cosh^{\Diamond}(\tens{A})=\sum_{k=0}^{\infty}\frac{1}{(2k)!}(\tens{A}*\tens{A}^{\top})^{k}*\tens{E}.
$$
\begin{remark}\label{rem1-5}
Since $\exp(0)=1\neq 0$ and $\cos(0)=1\neq 0$, the above generalized tensor function $\exp^{\Diamond}(\tens{A})$ and $\cos^{\Diamond}(\tens{A})$ hold only when $c_{j}^{(i)}\neq 0$ for all $i=1,2,\ldots, p$, $j=1,2,\ldots, r$.
\end{remark}
\end{example}

\subsection{Block tensor multiplication}
In matrix cases, knowledge of the structural structures of generalized matrix function $f(A)$ can lead to more accurate and efficient algorithms such as Toeplitz structures, triangular structures, circulant structures and so on \cite{Fiedler1, Horn1, Horn2}, which will lead to significant savings when computing it. Similar savings may be expected to generalized tensor functions. In the following subsections, we indicate to propose some structures preserved by generalized tensor functions based on the tensor T-product.\par
First, we need the following theorems which is the similar case of Corollary \ref{cor1-2} for complex tensors.
\begin{lemma}\label{lem1-6}
Suppose $\tens{A}\in \mathbb{C}^{m\times n\times p}$ is a third order tensor. Let $f: \mathbb{C}\rightarrow \mathbb{C}$ be a scalar function and let $f^{\Diamond}: \mathbb{C}^{m\times n\times p}\rightarrow \mathbb{C}^{m\times n\times p}$ be the corresponding generalized function of third order tensors. Then \\
{\rm (1)} $ [f^{\Diamond}(\tens{A})]^{H}=f^{\Diamond}(\tens{A}^{H})$.\\
{\rm (2)} Let $\tens{P} \in \mathbb{C}^{m\times m\times p}$
and $\tens{Q} \in \mathbb{C}^{n\times n\times p}$ be two unitary tensors, then $f^{\Diamond}(\tens{P} *\tens{A}* \tens{Q})=\tens{P}*f^{\Diamond}(\tens{A})* \tens{Q}$.\\
\end{lemma}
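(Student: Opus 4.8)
The plan is to follow the same route as the proof of Corollary \ref{cor1-2}, replacing the ordinary transpose by the conjugate transpose throughout and supplying the one extra ingredient that the induced map $\hat{f}(\cdot)$ on the F-diagonal factor commutes with conjugate transposition. The two structural facts I would lean on are that, by Lemma \ref{lem1-3}(4), $\bcirc(\tens{A}^{H})=(\bcirc(\tens{A}))^{H}$, and that the unitary tensors form a group under `$*$' (noted just after Definition \ref{def1-4}). Everything is then reduced, block by block after the Fourier diagonalization of Theorem \ref{the1-1}, to the matrix-level identities collected in Lemma \ref{lem1-5}.

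For part (1), I would start from the T-SVD $\tens{A}=\tens{U}*\tens{S}*\tens{V}^{H}$ and take the conjugate transpose, using Lemma \ref{lem1-3}(3), to obtain
$$
\tens{A}^{H}=\tens{V}*\tens{S}^{H}*\tens{U}^{H}.
$$
Since $\tens{S}^{H}$ is again an F-diagonal tensor with the same real, nonnegative singular tubes, this is a legitimate T-SVD of $\tens{A}^{H}$, so Theorem \ref{the1-1} gives $f^{\Diamond}(\tens{A}^{H})=\tens{V}*\hat{f}(\tens{S}^{H})*\tens{U}^{H}$. The crux is the identity $\hat{f}(\tens{S}^{H})=\hat{f}(\tens{S})^{H}$. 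By Lemma \ref{lem1-3}(4) we have $\bcirc(\tens{S}^{H})=(\bcirc(\tens{S}))^{H}$, so after the Fourier block-diagonalization the diagonal blocks of $\tens{S}^{H}$ are exactly the matrices $\Sigma_i^{H}$; applying the block definition of $\hat{f}$ then reduces the claim to $\tilde{f}(\Sigma_i^{H})=\tilde{f}(\Sigma_i)^{H}$ for each $i$, which is precisely Lemma \ref{lem1-5}(1) (equivalently Corollary \ref{cor1-1}(1)) for the matrix $\Sigma_i$. Substituting yields
$$
f^{\Diamond}(\tens{A}^{H})=\tens{V}*\hat{f}(\tens{S})^{H}*\tens{U}^{H}=\left[\tens{U}*\hat{f}(\tens{S})*\tens{V}^{H}\right]^{H}=\left[f^{\Diamond}(\tens{A})\right]^{H}.
$$

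For part (2), I would set $\tens{B}=\tens{P}*\tens{A}*\tens{Q}$ and insert the T-SVD of $\tens{A}$:
$$
\tens{B}=(\tens{P}*\tens{U})*\tens{S}*(\tens{Q}^{H}*\tens{V})^{H}.
$$
Because the unitary tensors form a group, $\tens{P}*\tens{U}$ and $\tens{Q}^{H}*\tens{V}$ are again unitary, so this is a T-SVD of $\tens{B}$ with the same F-diagonal factor $\tens{S}$. Theorem \ref{the1-1} then yields
$$
f^{\Diamond}(\tens{B})=(\tens{P}*\tens{U})*\hat{f}(\tens{S})*(\tens{Q}^{H}*\tens{V})^{H}=\tens{P}*\left(\tens{U}*\hat{f}(\tens{S})*\tens{V}^{H}\right)*\tens{Q}=\tens{P}*f^{\Diamond}(\tens{A})*\tens{Q}.
$$

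The only genuine obstacle is the commutation $\hat{f}(\tens{S}^{H})=\hat{f}(\tens{S})^{H}$ in part (1); this is where one must be careful that the definition of $\hat{f}$ in Theorem \ref{the1-1} is applied consistently to $\tens{S}$ and to $\tens{S}^{H}$, and that the reduction to Lemma \ref{lem1-5}(1) respects the matching block sizes $\Sigma_i\in\mathbb{R}^{m\times n}$ versus $\Sigma_i^{H}\in\mathbb{R}^{n\times m}$. A subtle point worth recording is well-definedness: $f^{\Diamond}$ should not depend on the particular T-SVD chosen, but this is already guaranteed by the block-diagonal Fourier construction of Theorem \ref{the1-1}, so both displayed factorizations above legitimately compute the same generalized function. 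Apart from this, the argument is a verbatim complex analogue of Corollary \ref{cor1-2} and requires no new computation.
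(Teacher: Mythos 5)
Your proof is correct and follows exactly the route the paper intends: the paper states this lemma without proof, deferring to it being ``the similar case of Corollary~\ref{cor1-2} for complex tensors,'' and your argument is precisely that adaptation, with the conjugate transpose in place of the transpose and the group property of unitary tensors doing the work in part~(2). The one step you add beyond the Corollary~\ref{cor1-2} template --- the blockwise verification that $\hat{f}(\tens{S}^{H})=\hat{f}(\tens{S})^{H}$ via Lemma~\ref{lem1-3}(4) and Lemma~\ref{lem1-5}(1) --- is exactly the detail the paper leaves implicit, so nothing is missing.
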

We also need the concept of block tensor which is corresponded to the tensor T-product.
\begin{definition}\label{def1-8} {\rm (Block tensor based on T-product)} Suppose $\tens{A}\in \mathbb{C}^{n_1 \times m_1 \times p}$, $\tens{B}\in \mathbb{C}^{n_1 \times m_2 \times p}$, $\tens{C}\in \mathbb{C}^{n_2 \times m_1 \times p}$ and $\tens{D}\in \mathbb{C}^{n_2 \times m_2 \times p}$ are four tensors. The block tensor
$$
\begin{bmatrix}
\tens{A}& \tens{B} \\
\tens{C}& \tens{D}
\end{bmatrix}\in \mathbb{C}^{(n_1+n_2)\times (m_1+m_2) \times p}
$$
is defined by compositing the frontal slices of the four tensors.
\end{definition}
By the definition of tensor T-product, matrix block and block matrix multiplication, we give the following result.
\begin{theorem}\label{the1-7} {\rm (Tensor block multiplication based on T-product)} Suppose $\tens{A}_1\in \mathbb{C}^{n_1 \times m_1 \times p}$, $\tens{B}_1\in \mathbb{C}^{n_1 \times m_2 \times p}$, $\tens{C}_1\in \mathbb{C}^{n_2 \times m_1 \times p}$, $\tens{D}_1\in \mathbb{C}_1^{n_2 \times m_2 \times p}$, $\tens{A}_2\in \mathbb{C}^{m_1 \times r_1 \times p}$, $\tens{B}_2\in \mathbb{C}^{m_1 \times r_2 \times p}$, $\tens{C}_2\in \mathbb{C}_1^{m_2 \times r_1 \times p}$ and $\tens{D}_2\in \mathbb{C}_1^{m_2 \times r_2 \times p}$ are complex tensors, then we have
\begin{equation}
\begin{bmatrix}
\tens{A}_1& \tens{B}_1 \\
\tens{C}_1& \tens{D}_1
\end{bmatrix}*
\begin{bmatrix}
\tens{A}_2& \tens{B}_2 \\
\tens{C}_2& \tens{D}_2
\end{bmatrix}=
\begin{bmatrix}
\tens{A}_1*\tens{A}_2+\tens{B}_1*\tens{C}_2&  \tens{A}_1*\tens{B}_2+\tens{B}_1*\tens{D}_2 \\
\tens{C}_1*\tens{A}_2+\tens{D}_1*\tens{C}_2& \tens{C}_1*\tens{B}_2+\tens{D}_1*\tens{D}_2 \\
\end{bmatrix}.
\end{equation}
\end{theorem}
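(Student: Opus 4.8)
The plan is to reduce the tensor identity to an ordinary block matrix identity through the operator $\bcirc$, exploiting two facts established earlier: $\bcirc$ intertwines the T-product with ordinary matrix multiplication (Lemma \ref{lem1-3}(1)), and $\bcirc$ is injective since it has a two-sided inverse $\bcirc^{-1}$ constructed in the Preliminaries. Because $\bcirc$ is injective, it suffices to show that applying it to the two sides of the claimed identity yields equal matrices. Writing $\tens{M}_1$ and $\tens{M}_2$ for the two block tensors on the left, Lemma \ref{lem1-3}(1) gives $\bcirc(\tens{M}_1*\tens{M}_2)=\bcirc(\tens{M}_1)\,\bcirc(\tens{M}_2)$, so the whole statement becomes a claim about the product of two concrete block circulant matrices.

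The key structural observation is how $\bcirc$ acts on a block tensor. By Definition \ref{def1-8} the $k$-th frontal slice of $\tens{M}_1$ is the block matrix $\left[\begin{smallmatrix} A_1^{(k)} & B_1^{(k)} \\ C_1^{(k)} & D_1^{(k)} \end{smallmatrix}\right]$, so $\bcirc(\tens{M}_1)$ is a $p\times p$ block circulant matrix each of whose blocks is itself $2\times2$-partitioned. First I would exhibit permutation matrices $P$, $Q$, $R$ — one for each of the three ambient index spaces of sizes $(n_1+n_2)p$, $(m_1+m_2)p$, $(r_1+r_2)p$ — realizing the \emph{perfect shuffle} that reorders the block circulant index against the $2$-block partition index, so that
$$\bcirc(\tens{M}_1)=P^{\top}\begin{bmatrix}\bcirc(\tens{A}_1)&\bcirc(\tens{B}_1)\\\bcirc(\tens{C}_1)&\bcirc(\tens{D}_1)\end{bmatrix}Q,\qquad \bcirc(\tens{M}_2)=Q^{\top}\begin{bmatrix}\bcirc(\tens{A}_2)&\bcirc(\tens{B}_2)\\\bcirc(\tens{C}_2)&\bcirc(\tens{D}_2)\end{bmatrix}R.$$
The crucial point is that the shared inner dimension $(m_1+m_2)p$ forces the same permutation $Q$ to appear on the right of the first factor and, transposed, on the left of the second.

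With this in hand the computation collapses. Multiplying the two displayed expressions, the inner permutations cancel since $QQ^{\top}=I$, leaving $P^{\top}$ times the ordinary product of two $2\times2$ block matrices times $R$. Carrying out that block matrix product and then applying Lemma \ref{lem1-3}(1) to each term — for instance the $(1,1)$ entry becomes $\bcirc(\tens{A}_1)\bcirc(\tens{A}_2)+\bcirc(\tens{B}_1)\bcirc(\tens{C}_2)=\bcirc(\tens{A}_1*\tens{A}_2+\tens{B}_1*\tens{C}_2)$ — produces exactly the $2\times2$ block matrix of the block circulant matrices of the four entries claimed on the right-hand side. Recognizing that the sandwich $P^{\top}[\,\cdots\,]R$ is the same perfect shuffle applied in reverse, this matrix equals $\bcirc$ of the right-hand block tensor, and injectivity of $\bcirc$ then finishes the proof.

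I expect the only genuine obstacle to be the bookkeeping in establishing the permutation similarity of the second step — verifying that $\bcirc$ of a block tensor is the perfect shuffle of the block matrix of block circulants, and in particular that the inner permutations are mutual transposes so that they cancel. A cleaner route that sidesteps permutations altogether is to argue directly on frontal slices: the defining relation $\unfold(\tens{X}*\tens{Y})=\bcirc(\tens{X})\,\unfold(\tens{Y})$ shows that the frontal slices of a T-product are the circular convolutions $(\tens{X}*\tens{Y})^{(k)}=\sum_{j}X^{(j)}Y^{(\sigma(k,j))}$ of the frontal slices, where the shift $\sigma$ depends only on $p$. Substituting the block frontal slices of $\tens{M}_1,\tens{M}_2$ and using that ordinary $2\times2$ block multiplication distributes over this sum — with the \emph{same} shift $\sigma$ reused in each of the four resulting T-products — reduces the claim to the single-slice identity of ordinary block matrix multiplication. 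This avoids any permutation algebra and is probably the shortest rigorous write-up.
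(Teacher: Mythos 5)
Your proposal is correct, but it takes a genuinely different route from the paper. The paper does not prove the general statement at all: it declares ``for simplicity of illustration'' that it will treat only $p=2$ and only the product of a $2\times 2$ block tensor with a $2\times 1$ block column, and then verifies that case by writing out $\fold(\bcirc(\cdot)\unfold(\cdot))$ explicitly on both sides and comparing the resulting $4$-block matrices entry by entry. Your primary argument --- pushing everything through $\bcirc$, observing that $\bcirc$ of a block tensor is a perfect-shuffle permutation similarity $P^{\top}[\,\bcirc(\tens{A}_1)\ \bcirc(\tens{B}_1);\ \bcirc(\tens{C}_1)\ \bcirc(\tens{D}_1)\,]Q$ of the block matrix of block circulants, and noting that the shared inner index space forces the two inner shuffles to cancel --- is a structural proof valid for arbitrary $p$ and for the full $2\times2$-by-$2\times2$ product as stated. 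It is also the precise sharpening of the paper's own Remark~\ref{rem1-7}, which only records that $\bcirc$ of a block tensor is \emph{not} equal to the block matrix of $\bcirc$'s; you identify that they differ exactly by the shuffle, which is what makes the reduction to ordinary block matrix multiplication legitimate. Your fallback argument via frontal slices and circular convolution, $(\tens{X}*\tens{Y})^{(k)}=\sum_{j}X^{(j)}Y^{(\sigma(k,j))}$ with the same shift reused in all four block positions, is essentially the general-$p$ version of the computation the paper carries out concretely for $p=2$, so either write-up would close the gap the paper leaves open. The only work remaining in your main route is the bookkeeping you already flagged: exhibiting the shuffles for unequal block sizes $n_1\neq n_2$ (the shuffle permutes contiguous row groups of sizes $n_1$ and $n_2$ rather than single indices) and checking that the column shuffle of the first factor is the transpose of the row shuffle of the second; both are routine.
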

\begin{proof}
For the simplicity of illustration, we only choose $p=2$ and prove the simple case,
$$
\begin{bmatrix}
\tens{A}& \tens{B} \\
\tens{C}& \tens{D}
\end{bmatrix}*
\begin{bmatrix}
\tens{E} \\
\tens{F}
\end{bmatrix}=
\begin{bmatrix}
\tens{A}*\tens{E}+\tens{B}*\tens{F} \\
\tens{C}*\tens{E}+\tens{D}*\tens{F} \\
\end{bmatrix}.
$$
$$
\begin{aligned}
{\rm Left-hand\ Side}&= {\rm fold}\left(\bcirc
\begin{bmatrix}
\tens{A}& \tens{B} \\
\tens{C}& \tens{D}
\end{bmatrix}
{\rm unfold}
\begin{bmatrix}
\tens{E} \\
\tens{F}
\end{bmatrix}
\right)\\
&={\rm fold}\left(
\left[\begin{array}{cc|cc}
A^{(1)}&B^{(1)} &A^{(2)} &B^{(2)} \\
C^{(1)}&D^{(1)} &C^{(2)} &D^{(2)} \\
\hline
A^{(2)} &B^{(2)} &A^{(1)} &B^{(1)} \\
C^{(2)} &D^{(2)} &C^{(1)} &D^{(1)} \\
\end{array}\right]
\left[\begin{array}{cc}
E^{(1)}\\
F^{(1)}\\
\hline
E^{(2)} \\
F^{(2)}
\end{array}\right]
\right)\\
&={\rm fold}\left(
\left[\begin{array}{cc}
A^{(1)}E^{(1)}+B^{(1)}F^{(1)}+A^{(2)}E^{(2)}+B^{(2)}F^{(2)}\\
C^{(1)}E^{(1)}+D^{(1)}F^{(1)}+C^{(2)}E^{(2)}+D^{(2)}F^{(2)}\\
\hline
A^{(2)}E^{(1)}+B^{(2)}F^{(1)}+A^{(1)}E^{(2)}+B^{(1)}F^{(2)} \\
C^{(2)}E^{(1)}+D^{(2)}F^{(1)}+C^{(1)}E^{(2)}+D^{(1)}F^{(2)}
\end{array}\right]
\right).
\end{aligned}
$$
$$
\begin{aligned}
{\rm Right-hand\ Side}&=
\begin{bmatrix}
\fold\left(\bcirc(\tens{A})\unfold(\tens{E})\right)+\fold\left(\bcirc(\tens{B})\unfold(\tens{F})\right)\\
\fold\left(\bcirc(\tens{C})\unfold(\tens{E})\right)+\fold\left(\bcirc(\tens{D})\unfold(\tens{F})\right)\\
\end{bmatrix}\\
&=
\begin{bmatrix}
\fold\left(\bcirc(\tens{A})\unfold(\tens{E})+\bcirc(\tens{B})\unfold(\tens{F})\right)\\
\fold\left(\bcirc(\tens{C})\unfold(\tens{E})+\bcirc(\tens{D})\unfold(\tens{F})\right)\\
\end{bmatrix}\\
&=
\begin{bmatrix}
\fold\left(\left[\begin{array}{c|c}A^{(1)}&A^{(2)} \\ \hline A^{(2)}& A^{(1)} \end{array}\right] \left[\begin{array}{c} E^{(1)}\\\hline E^{(2)} \end{array}\right]+\left[\begin{array}{c|c} B^{(1)}&B^{(2)} \\\hline B^{(2)}& B^{(1)} \end{array}\right] \left[\begin{array}{c}  F^{(1)}\\ \hline F^{(2)} \end{array}\right]\right)\\
\fold\left(\left[\begin{array}{c|c} C^{(1)}&C^{(2)} \\\hline C^{(2)}& C^{(1)} \end{array}\right] \left[\begin{array}{c}E^{(1)}\\ \hline E^{(2)} \end{array}\right]+\left[\begin{array}{c|c} D^{(1)}&D^{(2)} \\\hline D^{(2)}& D^{(1)} \end{array}\right] \left[\begin{array}{c} F^{(1)}\\\hline F^{(2)} \end{array}\right] \right)\\
\end{bmatrix}\\
&={\rm fold}\left(
\left[\begin{array}{cc}
A^{(1)}E^{(1)}+B^{(1)}F^{(1)}+A^{(2)}E^{(2)}+B^{(2)}F^{(2)}\\
C^{(1)}E^{(1)}+D^{(1)}F^{(1)}+C^{(2)}E^{(2)}+D^{(2)}F^{(2)}\\
\hline
A^{(2)}E^{(1)}+B^{(2)}F^{(1)}+A^{(1)}E^{(2)}+B^{(1)}F^{(2)} \\
C^{(2)}E^{(1)}+D^{(2)}F^{(1)}+C^{(1)}E^{(2)}+D^{(1)}F^{(2)}
\end{array}\right]
\right)\\
&={\rm Left-hand\ Side}.
\end{aligned}
$$
\end{proof}
\begin{remark}\label{rem1-6} Theorem \ref{the1-7} is of great importance, since it shows that our definition of block tensor confirms the definition of the tensor T-product, which will have great impact on obtaining the results of tensor T-decomposition theorems.
\end{remark}
\begin{remark}\label{rem1-7}
It should be noticed that, in the proof, the following equations do not hold:
$$
\bcirc\left(
\begin{bmatrix}
\tens{A}& \tens{B} \\
\tens{C}& \tens{D}
\end{bmatrix}
\right)\neq
\begin{bmatrix}
\bcirc(\tens{A})& \bcirc(\tens{B}) \\
\bcirc(\tens{C})& \bcirc(\tens{D})
\end{bmatrix},\
\unfold\left(
\begin{bmatrix}
\tens{E} \\
\tens{F}
\end{bmatrix}
\right)\neq
\begin{bmatrix}
\unfold(\tens{E}) \\
\unfold(\tens{F})
\end{bmatrix}.
$$
\end{remark}
\begin{remark}\label{rem1-8}For non-F-square tensors, if we set
$$
\tens{B}=
\begin{bmatrix}
0& \tens{A}\\
\tens{A}^H &0\\
\end{bmatrix}
$$
then $\tens{B}$ is a F-square tensor. We have that for any real-valued odd function $f$, the induced generalized tensor function satisfies
$$
f(\tens{B})=
\begin{bmatrix}
0& f^{\Diamond}(\tens{A})\\
f^{\Diamond}(\tens{A})^H &0\\
\end{bmatrix}.
$$
Here $f(\tens{B})$ is the standard tensor function of $\tens{B}$ while $f^{\Diamond}(\tens{A})$ is the generalized tensor function of $\tens{A}$.\par
For a general function $f=f_{odd}+f_{even}$ where $f_{odd}$ is the odd part of $f$ and $f_{even}$ is the even part of $f$ in terms of power series
$$
f(z)=\sum_{k=0}^{\infty}a_kz^k=f_{even}(z)+f_{odd}(z)=\sum_{k=0}^{\infty}a_{2k}z^{2k}+\sum_{k=0}^{\infty}a_{2k+1}z^{2k+1}.
$$
Then we have the same result as Arrigo, Benzi, and Fenu \cite{Arrigo1} as follows:
$$
f(\tens{A})=
\begin{bmatrix}
f_{even}(\sqrt{\tens{A}*\tens{A}^H})&f_{odd}^{\diamond}(\tens{A})\\
f_{odd}^{\diamond}(\tens{A})&f_{even}(\sqrt{\tens{A}^H*\tens{A}})
\end{bmatrix},
$$
which is a `mixture'  of the standard tensor function $f_{even}$ and the generalized tensor function $f_{odd}^{\diamond}$.
\end{remark}
We give an example by using both generalized tensor function and block tensor multiplication, which can be viewed as the tensor case of Benzi, Estrada, and Klymko \cite{Benzi2}.
\begin{example}
Let $\tens{B}\in\mathbb{C}^{2m\times 2n \times p}$ be an F-Hermitian tensor
$$
\tens{B}=
\begin{bmatrix}
0&\tens{A}\\
\tens{A}^{H}&0\\
\end{bmatrix}
$$
where $\tens{A}\in\mathbb{C}^{m\times n \times p}$. Then the standard exponential function ${\rm exp}(\tens{B})$ is given by
$$
\begin{aligned}
{\rm exp}(\tens{B})
&=\begin{bmatrix}
\cosh\left(\sqrt{\tens{A}*\tens{A}^{H}}\right)&\sinh^{\diamond}\left(\sqrt{\tens{A}^{H}*\tens{A}}\right)\\
\sinh^{\diamond}\left(\sqrt{\tens{A}^{H}*\tens{A}}\right)&\cosh\left(\sqrt{\tens{A}^{H}*\tens{A}}\right)
\end{bmatrix}\\
&=\begin{bmatrix}
\cosh\left(\sqrt{\tens{A}*\tens{A}^{H}}\right)&\tens{A}*\left(\sqrt{\tens{A}^{H}*\tens{A}}\right)^{\dag}*\sinh\left(\sqrt{\tens{A}^{H}*\tens{A}}\right)\\
\sinh\left(\sqrt{\tens{A}^{H}*\tens{A}}\right)*\left(\sqrt{\tens{A}^{H}*\tens{A}}\right)^{\dag}*\tens{A}^{H}&\cosh\left(\sqrt{\tens{A}^{H}*\tens{A}}\right)
\end{bmatrix},
\end{aligned}
$$
where $\sinh^{\diamond}$ is the generalized tensor function. $\sinh$, $\cosh$ is the standard tensor function. \par
The second equivalent relation is due to Corollary \ref{cor1-4}:
$$
f^{\diamond}(\tens{A})=f\left(\sqrt{\tens{A}*\tens{A}^H}\right)\left(\tens{A}*\tens{A}^H\right)^{\dag}*\tens{A}=\tens{A}*\left(\tens{A}^H*\tens{A}\right)^{\dag}*f\left(\sqrt{\tens{A}^H*\tens{A}}\right).
$$
\end{example}
\subsection{Bilinear forms}
By using the concepts of tensor block and tensor multiplication, now we can define the bilinear forms of tensors, which will lead to further results of invariance of generalized tensor functions and several classes of tensors whose properties are preserved by generalized tensor functions. For matrix cases, there is already a lot of papers on this problem. \par
We emphasize that since the generalized tensor function $f^{\Diamond}(\tens{A})$ cannot be represented as a polynomial of the tensor $\tens{A}$, so the associative tensor T-product algebra may not be closed under our definition of generalized tensor functions. Not only the tensor classes already given in Table $1$ and Table $2$, several other classes of tensors are also introduced. First, we introduced the definition of tensor bilinear form as follows.
\begin{definition}\label{def1-9}\cite{Kilmer1} {\rm (Bilinear form of tensors)} Let $\mathbb{K}^{1\times 1\times p}=\mathbb{R}^{1\times 1\times p}$ or $\mathbb{C}^{1\times 1\times p}$, a scalar product of $\mathbb{K}^{n\times 1\times p}$ is a bilinear or sesquilinear form $\langle \cdot, \cdot \rangle_{\tens{T}}$ defined by any nonsingular tensor $\tens{T}\in \mathbb{K}^{n\times n\times p}$ for $x,y \in \mathbb{K}^{n\times 1\times p}$
$$
\langle \cdot, \cdot \rangle_{\tens{T}}:\mathbb{K}^{n\times 1 \times p}\times \mathbb{K}^{n\times 1 \times p}\rightarrow \mathbb{K}^{1\times 1 \times p},
$$
which is given as follows:
\begin{equation}
\langle x, y \rangle_{\tens{T}}=
\left\{
\begin{aligned}
&x^{\top}*\tens{T}*y, \  {\rm for\ real\ or\ complex\ bilinear\ forms},\\
&x^{H}*\tens{T}*y, \  {\rm for\ sesquilinear \ forms}.\\
\end{aligned}
\right.
\end{equation}
\end{definition}
We give a simple example to illustrate the Bilinear form of tensors based on the T-product.
\begin{example}Suppose $x,y\in \mathbb{R}^{3\times 1\times 2}$ is two real tensors whose elements of the frontal slices are given as:
$$
x^{(1)}=\begin{bmatrix}
\frac{3}{2}\\
\frac{3}{2}\\
\frac{5}{2}\\
\end{bmatrix},\quad
x^{(2)}=\begin{bmatrix}
\frac{1}{2}\\
-\frac{1}{2}\\
-\frac{1}{2}\\
\end{bmatrix},
y^{(1)}=\begin{bmatrix}
\frac{3}{2}\\
\frac{5}{2}\\
\frac{1}{2}\\
\end{bmatrix},\quad
y^{(2)}=\begin{bmatrix}
\frac{3}{2}\\
\frac{3}{2}\\
\frac{1}{2}\\
\end{bmatrix}.
$$
We choose $\tens{T}$ as the identity tensor $\tens{I}$. Then $\langle x,y\rangle_{\tens{I}}$ is a $1\times1\times 2$ real tensor, and it is easy to get its frontal slices are
$$
\langle x,y\rangle_{\tens{I}}^{(1)}=7,\quad\langle x,y\rangle_{\tens{I}}^{(2)}=5.
$$
\end{example}
The adjoint of $\tens{A}$ with respect to the scalar product $\langle \cdot, \cdot \rangle_{\tens{T}}$, denoted by $\tens{A}^{\star}$, is uniquely defined by the property $\langle \tens{A}*x, y \rangle_{\tens{T}}=\langle x, \tens{A}^{\star}*y \rangle_{\tens{T}}$ for all $x, y \in \mathbb{K}^{n\times 1\times p}$.\par
Associated with $\langle \cdot, \cdot \rangle_{\tens{T}}$  is an automorphism group $\mathbb{G}$, a Lie algebra $\mathbb{L}$, and a Jordan algebra $\mathbb{J}$, which are the subsets of $\mathbb{K}^{n\times n \times p}$ defined by
$$
\begin{cases}
\mathbb{G}:=\{\tens{G}: \langle \tens{G}*x, \tens{G}*y \rangle_{\tens{T}}= \langle x, y \rangle_{\tens{T}}, \forall x,y\in \mathbb{K}^{n\times 1\times p}\}=\{\tens{G}:\tens{G}^{\star}=\tens{G}^{-1}\},\\
\mathbb{L}:=\{\tens{L}: \langle \tens{L}*x, y \rangle_{\tens{T}}= -\langle x, \tens{L}*y \rangle_{\tens{T}}, \forall x,y\in \mathbb{K}^{n\times 1\times p}\}=\{\tens{L}:\tens{L}^{\star}=-\tens{L}\},\\
\mathbb{J}:=\{\tens{S}: \langle \tens{S}*x, y \rangle_{\tens{T}}= \langle x, \tens{S}*y \rangle_{\tens{T}}, \forall x,y\in \mathbb{K}^{n\times 1\times p}\}=\{\tens{S}:\tens{S}^{\star}=\tens{S}\}.
\end{cases}
$$
Here, $\mathbb{G}$ is a multiplicative group, while $\mathbb{L}$ and $\mathbb{J}$ are linear subspaces of $\mathbb{K}^{n\times n \times p}$.\par
Similar to matrix cases \cite{Benzi1}, we present several important structures for tensors. These tensors form Lie and Jordan tensors algebras over real and complex fields and can be named in terms of symmetry, anti-symmetry and so on, with respect to tensor bilinear and sesquilinear forms. We first introduce some special notations of tensors:\\
{\rm (1)} The reverse tensor $\tens{R}_n\in \mathbb{R}^{n\times n \times p}$ is the tensor whose first frontal slice is
$$
\begin{bmatrix}
& &1\\
& \iddots & \\
1& &
\end{bmatrix}
$$
and other frontal slices are all zeros.\\
{\rm (2)} The skew Hamiltonian tensor
$$
\tens{J}=
\begin{bmatrix}
0&\tens{I}_{nnp} \\
-\tens{I}_{nnp}&0
\end{bmatrix}\in \mathbb{R}^{2n\times 2n \times p}.
$$
{\rm (3)} The pseudo-symmetric tensor
$$
\Sigma_{a,b}=
\begin{bmatrix}
\tens{I}_{aap}&0 \\
0&-\tens{I}_{bbp}
\end{bmatrix}\in \mathbb{R}^{n\times n \times p},
$$
with $a+b=n$.\\
{\rm (4)} The adjoint tensor
\begin{equation}
\tens{A}^{\star}=
\left\{
\begin{aligned}
&\tens{T}^{-1}*\tens{A}^{\top}*\tens{T}, \  {\rm for\ bilinear\ forms},\\
&\tens{T}^{-1}*\tens{A}^{H}*\tens{T}, \  {\rm for\ sesquilinear\ forms},\\
\end{aligned}
\right.
\end{equation}
where $\tens{T}$ is one of the tensors defining the above bilinear or sesquilinear forms. \par
We find the generalized tensor function preserves some tensor structures as the following theorem.
\begin{theorem}\label{the1-8}
Let $\mathbb{T}$ be one of the classes in Table $1$. If $\ten{A}\in \mathbb{T}$ and $f^{\Diamond}(\tens{A})$ is well defined, then $f^{\Diamond}(\tens{A})\in \mathbb{T}$.
\end{theorem}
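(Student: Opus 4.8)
The plan is to prove the theorem uniformly by showing that the generalized tensor function commutes with the adjoint operation $\star$ attached to the underlying bilinear or sesquilinear form, and then reading off membership class by class. Every class $\mathbb{T}$ in Table $1$ is cut out by a relation of the form $\tens{A}^{\star}=\tens{A}$ (a Jordan-type class, as for $\mathbb{J}$) or $\tens{A}^{\star}=-\tens{A}$ (a Lie-type class, as for $\mathbb{L}$), where $\tens{A}^{\star}=\tens{T}^{-1}*\tens{A}^{\top}*\tens{T}$ in the bilinear case and $\tens{A}^{\star}=\tens{T}^{-1}*\tens{A}^{H}*\tens{T}$ in the sesquilinear case, the structure tensor $\tens{T}$ being one of $\tens{I}$, $\tens{R}_n$, $\tens{J}$, $\Sigma_{a,b}$. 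Hence it suffices to establish the single identity $f^{\Diamond}(\tens{A}^{\star})=[f^{\Diamond}(\tens{A})]^{\star}$ together with the negation rule $f^{\Diamond}(-\tens{A})=-f^{\Diamond}(\tens{A})$.

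First I would verify that each defining tensor $\tens{T}$ is orthogonal, so that $\tens{T}^{-1}=\tens{T}^{\top}$ and both $\tens{T}$ and $\tens{T}^{-1}$ qualify as conjugating factors in Corollary \ref{cor1-2}(2) and Lemma \ref{lem1-6}(2). This is a direct slice-level check through $\bcirc$: only the first frontal slice of each of $\tens{R}_n$, $\tens{J}$, $\Sigma_{a,b}$ is nonzero, and that slice is a real orthogonal matrix (the exchange matrix for $\tens{R}_n$, the signature matrix $\mathrm{diag}(I_a,-I_b)$ for $\Sigma_{a,b}$, and the skew-Hamiltonian block for $\tens{J}$, which satisfies $T^{\top}T=I$ because its square is $-I$), whence $\tens{T}^{\top}*\tens{T}=\tens{I}$. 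Being real and orthogonal, each $\tens{T}$ is in particular unitary, so the conjugation rule is available in both the bilinear (Corollary \ref{cor1-2}) and the sesquilinear (Lemma \ref{lem1-6}) settings.

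The central step is the commutation identity. Writing $\tens{A}^{\star}=\tens{T}^{-1}*\tens{A}^{\top}*\tens{T}$ and applying the conjugation rule $f^{\Diamond}(\tens{T}^{-1}*\tens{B}*\tens{T})=\tens{T}^{-1}*f^{\Diamond}(\tens{B})*\tens{T}$ of Corollary \ref{cor1-2}(2) with $\tens{B}=\tens{A}^{\top}$, followed by the transpose rule $f^{\Diamond}(\tens{A}^{\top})=[f^{\Diamond}(\tens{A})]^{\top}$ of Corollary \ref{cor1-2}(1), yields
$$
f^{\Diamond}(\tens{A}^{\star})=\tens{T}^{-1}*f^{\Diamond}(\tens{A}^{\top})*\tens{T}=\tens{T}^{-1}*[f^{\Diamond}(\tens{A})]^{\top}*\tens{T}=[f^{\Diamond}(\tens{A})]^{\star}.
$$
The sesquilinear case is identical, with $\tens{A}^{H}$ and Lemma \ref{lem1-6} replacing $\tens{A}^{\top}$ and Corollary \ref{cor1-2}. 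The negation rule follows from the same conjugation rule applied to the orthogonal tensor $-\tens{I}$: since $(-\tens{I})^{\top}*(-\tens{I})=\tens{I}$, one gets $f^{\Diamond}(-\tens{A})=f^{\Diamond}((-\tens{I})*\tens{A})=(-\tens{I})*f^{\Diamond}(\tens{A})=-f^{\Diamond}(\tens{A})$, consistent with the convention in Remark \ref{rem1-1} that $f$ may be taken odd.

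Finally I would conclude by cases. If $\tens{A}\in\mathbb{T}$ is of Jordan type, then $\tens{A}^{\star}=\tens{A}$ gives $[f^{\Diamond}(\tens{A})]^{\star}=f^{\Diamond}(\tens{A}^{\star})=f^{\Diamond}(\tens{A})$, so $f^{\Diamond}(\tens{A})\in\mathbb{T}$; if $\tens{A}$ is of Lie type, then $\tens{A}^{\star}=-\tens{A}$ gives $[f^{\Diamond}(\tens{A})]^{\star}=f^{\Diamond}(-\tens{A})=-f^{\Diamond}(\tens{A})$, again placing $f^{\Diamond}(\tens{A})$ in $\mathbb{T}$. I expect the main obstacle to be organizational rather than computational: one must confirm that every entry of Table $1$ is indeed defined by an orthogonal (hence unitary) $\tens{T}$ together with an adjoint $\star$ of exactly one of the two signs, and keep the bilinear variant ($\tens{A}^{\top}$, Corollary \ref{cor1-2}) and the sesquilinear variant ($\tens{A}^{H}$, Lemma \ref{lem1-6}) consistently paired. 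Automorphism-group classes lie outside the reach of this argument, since preserving $\tens{A}^{\star}=\tens{A}^{-1}$ would force the multiplicative identity $f^{\Diamond}(\tens{A}^{-1})=[f^{\Diamond}(\tens{A})]^{-1}$, which a generalized tensor function need not satisfy.
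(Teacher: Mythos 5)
Your proposal is correct and follows essentially the same route as the paper's proof: check that each structure tensor $\tens{T}$ is unitary, use the transpose/conjugate-transpose rule together with unitary invariance of $f^{\Diamond}$ to obtain $f^{\Diamond}(\tens{A}^{\star})=[f^{\Diamond}(\tens{A})]^{\star}$, and then split into the Jordan and Lie cases. The only cosmetic difference is that you derive $f^{\Diamond}(-\tens{A})=-f^{\Diamond}(\tens{A})$ by conjugating with $-\tens{I}$, whereas the paper appeals to the odd-extension convention of Remark \ref{rem1-1}; both are adequate, and your closing observation that automorphism groups require the separate hypothesis of Theorem \ref{the1-10} matches the paper's organization.
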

\begin{proof}
It is obvious that $\tens{R}_n$, $\tens{J}$ and $\Sigma_{a,b}$ are unitary tensors based on the T-product. By Lemma \ref{lem1-6}, we have
$$
f^{\Diamond}(\tens{A}^{\star})=
\left\{
\begin{aligned}
&\tens{T}^{-1}*f^{\Diamond}(\tens{A})^{\top}*\tens{T}=f^{\Diamond}(\tens{A})^{\star}, \  {\rm for\ bilinear\ forms},\\
&\tens{T}^{-1}*f^{\Diamond}(\tens{A})^{H}*\tens{T}=f^{\Diamond}(\tens{A})^{\star}, \  {\rm for\ sesquilinear\ forms}.\\
\end{aligned}
\right.
$$
Hence, for Jordan algebra $\mathbb{J}$ we have $f^{\Diamond}(\tens{A})^{\star}=f^{\Diamond}(\tens{A}^{\star})=f^{\Diamond}(\tens{A})$, for Lie algebra $\mathbb{L}$ we have $f^{\Diamond}(\tens{A})^{\star}=f^{\Diamond}(\tens{A}^{\star})=f^{\Diamond}(-\tens{A})=-f^{\Diamond}(\tens{A})$ because of Remark \ref{rem1-1}, which means we can set $f$ to be an odd function.
\end{proof}
\begin{table}[htbp]
	\centering
	\caption{Structured tensors associated with certain bilinear and sesquilinear forms}
	\label{table1}
	\begin{tabular}{|c|c|c|c|}
		\hline
Space & $\tens{T}$ &Jordan Algebra $\mathbb{J}=\{\tens{S}:\tens{S}^{\star}=\tens{S} \}$  & Lie algebra $\mathbb{L}=\{\tens{L}:\tens{L}^{\star}=-\tens{L}\}$\\
		\hline
		\multicolumn{4}{|c|}{Bilinear Forms} \\
		\hline
$\mathbb{R}^{n\times 1 \times p} $  &  $\tens{I}$    & Symmetrics      & Skew-symmetrics \\
		\hline
$\mathbb{C}^{n\times 1 \times p} $  &  $\tens{I}$    & Complex symmetrics      & Complex skew-symmetrics \\
		\hline
$\mathbb{R}^{n\times 1 \times p} $  &  $\Sigma_{a,b}$    & Pseudo-symmetrics      & Pseudo-skew-symmetrics \\
                \hline
$\mathbb{C}^{n\times 1 \times p} $  &  $\Sigma_{a,b}$    & Complex pseudo-symmetrics      & Complex pseudo-skew-symmetrics \\
                \hline
$\mathbb{R}^{n\times 1 \times p} $  &  $\tens{R}_n$    & Persymmetrics      & Perskew-symmetrics \\
                \hline
$\mathbb{R}^{2n\times 1 \times p} $  &  $\tens{J} $    & Skew-Hamiltonians      & Hamiltonians \\
	       \hline
$\mathbb{C}^{2n\times 1 \times p} $  &  $\tens{J} $    & Complex $\tens{J}$-skew-symmetrics      & Complex $\tens{J}$-symmetrics \\
		\hline
		\multicolumn{4}{|c|}{Sesquilinear Forms} \\	
		\hline
$\mathbb{C}^{n\times 1 \times p} $  &  $\tens{I}$    & Hermitians      & Skew-Hermitians \\  		
	         \hline
$\mathbb{C}^{n\times 1 \times p} $  &  $\Sigma_{a,b}$    & Pseudo-Hermitians      & Pseudo-skew-Hermitians \\
	         \hline
$\mathbb{C}^{n\times 1 \times p} $  &  $\tens{R}_{n}$    & Perhermitians      & Skew-perhermitians \\
	         \hline
$\mathbb{C}^{2n\times 1 \times p} $  &  $\tens{J}$    & $\tens{J}$-skew-Hermitians      & $\tens{J}$-Hermitians \\
                 \hline
			
	\end{tabular}
\end{table}
There are other kind of tensor classes that are preserved by arbitrary generalized tensor functions.\par
\begin{definition}\label{def1-10} {\rm (Centrohermitian tensor)}
$\tens{A}\in \mathbb{C}^{m\times n\times p}$ is centrohermitian (Skew-centrohermitian), if $\tens{R}_m *\tens{A}*\tens{R}_n=\overline{\tens{A}}$ (respectively, $\tens{R}_m *\tens{A}*\tens{R}_n=-\overline{\tens{A}}$).
\end{definition}
\begin{theorem}\label{the1-9}
Suppose $\tens{A}\in \mathbb{C}^{n\times m\times p}$ is a tensor. Let $f: \mathbb{C}\rightarrow \mathbb{C}$ be a scalar function and let $f^{\Diamond}: \mathbb{C}^{m\times n\times p}\rightarrow \mathbb{C}^{m\times n\times p}$ be the corresponding generalized function of third order tensors which is assumed to be well defined at $\tens{A}$.\\
{\rm (1)} If $\tens{A}$ is centrohermitian (skew-centrohermitian), then $f^{\Diamond}(\tens{A})$ is also centrohermitian (Skew-centrohermitian).\\
{\rm (2)} If $m=n$ and $\ten{A}^H*\tens{A}=\ten{A}*\tens{A}^H$ (which is called normal), then $f^{\Diamond}(\tens{A})$ is also normal.\\
{\rm (3)} If $m=n$ and $\ten{A}$ is F-circulant then $f^{\Diamond}(\tens{A})$ is also F-circulant.\\
{\rm (4)} If $\ten{A}$ is a F-block-circulant tensor with F-circulant blocks, then $f^{\Diamond}(\tens{A})$ is also a F-block-circulant tensor with F-circulant blocks.
\end{theorem}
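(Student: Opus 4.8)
The plan is to route all four claims through the block circulant operator, using the master identity
$$
\bcirc(f^{\Diamond}(\tens{A}))=f^{\Diamond}(\bcirc(\tens{A})),
$$
where on the right $f^{\Diamond}$ denotes the \emph{matrix} generalized function. I would first prove this identity, using only Theorem \ref{the1-1} and Lemma \ref{lem1-5}: conjugating $\bcirc(\tens{A})=\bcirc(\tens{U})\bcirc(\tens{S})\bcirc(\tens{V}^{H})$ by $F_{p}\otimes I$ block-diagonalizes it into $\mathrm{diag}(U_{i}\Sigma_{i}V_{i}^{H})$, and then the unitary-invariance and direct-sum properties of Lemma \ref{lem1-5} (2),(3), together with $f^{\Diamond}(U_{i}\Sigma_{i}V_{i}^{H})=U_{i}\tilde{f}(\Sigma_{i})V_{i}^{H}$, reproduce exactly the block structure defining $\bcirc(\hat{f}(\tens{S}))$. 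In the same spirit, combining Corollary \ref{cor1-1} (2) with the trivial relation $\bcirc(\overline{\tens{A}})=\overline{\bcirc(\tens{A})}$ yields the tensor conjugation rule $\overline{f^{\Diamond}(\tens{A})}=f^{\Diamond}(\overline{\tens{A}})$, while Lemma \ref{lem1-3} (4) gives $\bcirc(\tens{A}^{H})=\bcirc(\tens{A})^{H}$. These three facts convert every structural question about $f^{\Diamond}(\tens{A})$ into the corresponding question about the matrix $f^{\Diamond}(\bcirc(\tens{A}))$.

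The single matrix fact I would isolate is: if $W$ is a fixed unitary matrix and $\mathcal{C}_{W}:=\{WDW^{H}:D\ \text{diagonal}\}$, then $f^{\Diamond}$ maps $\mathcal{C}_{W}$ into itself. This follows immediately from the formula $f^{\Diamond}(M)=f(\sqrt{MM^{H}})(\sqrt{MM^{H}})^{\dag}M$ of Lemma \ref{lem1-5} (4): for $M=WDW^{H}$ one has $MM^{H}=W|D|^{2}W^{H}$, hence $\sqrt{MM^{H}}$, its Moore--Penrose inverse, and $f(\sqrt{MM^{H}})$ all lie in the commutative algebra $\mathcal{C}_{W}$, so their product with $M$ does as well. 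With this in hand, part (2) is immediate: $\tens{A}$ is normal iff $\bcirc(\tens{A})$ is a normal matrix (by Lemma \ref{lem1-3} (1),(4)); a normal matrix lies in $\mathcal{C}_{W}$ for its own eigenbasis $W$, so $f^{\Diamond}(\bcirc(\tens{A}))\in\mathcal{C}_{W}$ is again normal, and the master identity transports normality back to $f^{\Diamond}(\tens{A})$.

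For part (1), since the reverse tensors $\tens{R}_{m},\tens{R}_{n}$ are real and orthogonal, unitary invariance (Lemma \ref{lem1-6} (2)) gives $f^{\Diamond}(\tens{R}_{m}*\tens{A}*\tens{R}_{n})=\tens{R}_{m}*f^{\Diamond}(\tens{A})*\tens{R}_{n}$; feeding in centrohermitianness $\tens{R}_{m}*\tens{A}*\tens{R}_{n}=\overline{\tens{A}}$ and the conjugation rule yields $\tens{R}_{m}*f^{\Diamond}(\tens{A})*\tens{R}_{n}=f^{\Diamond}(\overline{\tens{A}})=\overline{f^{\Diamond}(\tens{A})}$, which is precisely centrohermitianness of $f^{\Diamond}(\tens{A})$; the skew case differs only by a sign, absorbed by taking $f$ odd (Remark \ref{rem1-1}). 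For parts (3) and (4) I would identify the correct fixed $W$: if every frontal slice of $\tens{A}$ is circulant then $\bcirc(\tens{A})$ is block circulant with circulant blocks, diagonalized by $W=F_{p}\otimes F_{n}$; if every frontal slice is itself block circulant with circulant blocks then $\bcirc(\tens{A})$ is a threefold circulant, diagonalized by a threefold Kronecker product of Fourier matrices. In both cases $\bcirc(\tens{A})\in\mathcal{C}_{W}$, so $f^{\Diamond}(\bcirc(\tens{A}))\in\mathcal{C}_{W}$ retains the nested circulant pattern; reading off the blocks of $\bcirc(f^{\Diamond}(\tens{A}))$ then shows the frontal slices of $f^{\Diamond}(\tens{A})$ are circulant (respectively, block circulant with circulant blocks).

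The main obstacle I anticipate is not any single part but establishing the master identity and the $\mathcal{C}_{W}$-closure rigorously when $\tens{A}$ has vanishing singular values. The expression in Lemma \ref{lem1-5} (4) requires $f$ to be defined at every singular value, and closure of $\mathcal{C}_{W}$ under the standard matrix function $f(\sqrt{MM^{H}})$ and under the Moore--Penrose inverse $(\sqrt{MM^{H}})^{\dag}$ must be checked on the rank-deficient blocks, where the hypothesis that $f^{\Diamond}(\tens{A})$ is well defined supplies the needed normalization $f(0)=0$ (Remark \ref{rem1-1}). Since $\mathcal{C}_{W}$ is a commutative algebra closed under conjugate transpose and Moore--Penrose inversion, once each factor is shown to lie in it the required commutations and the membership $f^{\Diamond}(\bcirc(\tens{A}))\in\mathcal{C}_{W}$ become automatic, and all four structures follow by selecting the appropriate $W$ and translating through $\bcirc$.
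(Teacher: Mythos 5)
Your proposal is correct, and for part (1) it coincides with the paper's argument verbatim: unitary invariance of $f^{\Diamond}$ under the orthogonal reverse tensors $\tens{R}_m,\tens{R}_n$ combined with the conjugation rule $\overline{f^{\Diamond}(\tens{A})}=f^{\Diamond}(\overline{\tens{A}})$, with Remark \ref{rem1-1} absorbing the sign in the skew case. For parts (2)--(4) the overall strategy is also the paper's (apply `$\bcirc$', block-diagonalize by $F_p\otimes I$, work blockwise), but you supply substantially more than the paper does: the paper's proof of (2) stops at ``$D_1,\dots,D_p$ are normal matrices, by the definition of generalized tensor function we get the result,'' and (3)--(4) are dismissed as ``the same reason.'' Your $\mathcal{C}_W$-closure lemma --- that $f^{\Diamond}$ maps $\{WDW^H: D\ \text{diagonal}\}$ into itself via the formula of Lemma \ref{lem1-5}(4), with $f(0)=0$ handling rank-deficient blocks --- is exactly the missing ingredient that makes ``normal is preserved'' and ``nested circulant structure is preserved'' rigorous, and your identification of the diagonalizing unitaries $F_p\otimes F_n$ (and the threefold Kronecker product for (4)) makes (3) and (4) actual proofs rather than assertions. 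You also route everything through the identity $\bcirc(f^{\Diamond}(\tens{A}))=f^{\Diamond}(\bcirc(\tens{A}))$, which the paper only establishes later as Theorem \ref{the1-14}; deriving it here from Lemma \ref{lem1-5}(2),(3) is legitimate and arguably cleaner, since it also yields the tensor conjugation rule that the paper's proof of (1) uses without justification. In short: same skeleton, but your version fills the genuine gaps in the paper's proof of (2)--(4).
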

\begin{proof}
{\rm (1)} If $\tens{A}$ is centrohermitian, then we have $\tens{R}_m *\tens{A}*\tens{R}_n=\overline{\tens{A}}$. Since $\tens{R}_n$ and $\tens{R}_n$ are unitary tensors, so by Lemma \ref{lem1-6}, we have $\tens{R}_m *f^{\Diamond}(\tens{A})*\tens{R}_n=f^{\Diamond}(\tens{R}_m *\tens{A}*\tens{R}_n)=f^{\Diamond}(\overline{\tens{A}})=\overline{f^{\Diamond}(\tens{A})}$.\par
 If $\tens{A}$ is Skew-centrohermitian, then we have $\tens{R}_m *\tens{A}*\tens{R}_n=-\overline{\tens{A}}$. Since $\tens{R}_n$ and $\tens{R}_n$ are unitary tensors, so by Lemma \ref{lem1-6}, we have $\tens{R}_m *f^{\Diamond}(\tens{A})*\tens{R}_n=f^{\Diamond}(\tens{R}_m *\tens{A}*\tens{R}_n)=f^{\Diamond}(-\overline{\tens{A}})=-\overline{f^{\Diamond}(\tens{A})}$.\\
{\rm (2)} Suppose $\tens{A}\in \mathbb{C}^{m\times n\times p}$ is normal, that is, $\tens{A}*\tens{A}^H=\tens{A}^H*\tens{A}$. So we take the `$\bcirc$' operator on both sides of the equation, notice that $\bcirc(\tens{A}^H)=(\bcirc(\tens{A}))^H$, we get $\bcirc(\tens{A})$ is a normal matrix. Suppose
$$
\bcirc(\tens{A})=(F_{p}\otimes I_{m} )
\begin{bmatrix}
 D_1 &  &  &  \\

  & D_2 &   & \\

  & & \ddots &\\

 & &  & D_{p}
\end{bmatrix}
(F_{p}^{H}\otimes I_{n} ).
$$
We get $D_1, D_2,\cdots, D_p$ are normal matrices, by the definition of generalized tensor function, we get the result.\\
{\rm (3)} and {\rm (4)} hold because of the same reason of {\rm (2)} by taking `$\bcirc$' operator on the tensor $\tens{A}$.
\end{proof}
For some structured tensors, their structures may not be preserved by every generalized tensor function $f^{\Diamond}$, but if we put some restrictions on the original scalar function $f$, we can get $f^{\Diamond}(\tens{A})$ holds the same structure with $\tens{A}$. The following theorems are good examples.
\begin{theorem}\label{the1-10}
Let $\mathbb{G}$ be one of the tensor groups in Table $2$. If $\tens{A}\in \mathbb{G}$, $f: \mathbb{R}\rightarrow \mathbb{R}$ is defined for $x>0$ and satisfies
$$
f(x)f(\frac{1}{x})=1\ {\rm and}\ f(0)=0,
$$
then $f^{\Diamond}(\tens{A})\in \mathbb{G}$.
\end{theorem}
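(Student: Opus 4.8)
The plan is to reduce the group-membership condition $f^{\Diamond}(\tens{A})^{\star}=f^{\Diamond}(\tens{A})^{-1}$ to two separate facts: that $f^{\Diamond}$ commutes with the adjoint $\star$, and that under the functional equation $f(x)f(1/x)=1$ it also intertwines inversion. First I would record the adjoint identity $f^{\Diamond}(\tens{A}^{\star})=f^{\Diamond}(\tens{A})^{\star}$. Every tensor $\tens{T}$ occurring in Table $2$ (namely $\tens{I}$, $\Sigma_{a,b}$, $\tens{R}_n$, $\tens{J}$) is unitary, so by Lemma~\ref{lem1-6}(2) applied with $\tens{P}=\tens{T}^{-1}$ and $\tens{Q}=\tens{T}$, together with Lemma~\ref{lem1-6}(1) in the sesquilinear case and Corollary~\ref{cor1-2}(1) in the bilinear case,
\[
f^{\Diamond}(\tens{A}^{\star}) = f^{\Diamond}(\tens{T}^{-1}*\tens{A}^{H}*\tens{T}) = \tens{T}^{-1}*f^{\Diamond}(\tens{A})^{H}*\tens{T} = f^{\Diamond}(\tens{A})^{\star},
\]
with $\tens{A}^{H}$ replaced by $\tens{A}^{\top}$ throughout in the bilinear case; this is exactly the computation used in the proof of Theorem~\ref{the1-8}. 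Since $\tens{A}\in\mathbb{G}$ means $\tens{A}^{\star}=\tens{A}^{-1}$, this already gives $f^{\Diamond}(\tens{A})^{\star}=f^{\Diamond}(\tens{A}^{-1})$, and the whole theorem reduces to the inversion identity $f^{\Diamond}(\tens{A}^{-1})=f^{\Diamond}(\tens{A})^{-1}$.

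Next I would establish that identity from the T-SVD. Membership in $\mathbb{G}$ forces $\tens{A}$ to be an invertible $n\times n\times p$ tensor, so writing $\tens{A}=\tens{U}*\tens{S}*\tens{V}^{H}$ with all singular tubes nonzero, one has $\tens{A}^{-1}=\tens{V}*\tens{S}^{-1}*\tens{U}^{H}$. Applying the generalized function to each factorization yields $f^{\Diamond}(\tens{A}^{-1})=\tens{V}*\hat{f}(\tens{S}^{-1})*\tens{U}^{H}$ and, using unitarity $\tens{U}^{-1}=\tens{U}^{H}$, $\tens{V}^{-1}=\tens{V}^{H}$, also $f^{\Diamond}(\tens{A})^{-1}=\tens{V}*[\hat{f}(\tens{S})]^{-1}*\tens{U}^{H}$, so it suffices to check $\hat{f}(\tens{S}^{-1})=[\hat{f}(\tens{S})]^{-1}$. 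Passing to the block-diagonal Fourier domain as in Theorem~\ref{the1-1}, the $i$-th block of $\tens{S}$ is a positive diagonal matrix $\Sigma_{i}$, so $\tilde{f}(\Sigma_{i})={\rm diag}(f(\sigma))$ and $\tilde{f}(\Sigma_{i}^{-1})={\rm diag}(f(1/\sigma))$, whereas $[\tilde{f}(\Sigma_{i})]^{-1}={\rm diag}(1/f(\sigma))$; these agree precisely because $f(\sigma)f(1/\sigma)=1$ on the positive singular values. Undoing the transform gives the inversion identity, and combining it with the adjoint identity produces $f^{\Diamond}(\tens{A})^{\star}=f^{\Diamond}(\tens{A})^{-1}$, i.e.\ $f^{\Diamond}(\tens{A})\in\mathbb{G}$. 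Note $f(\sigma)f(1/\sigma)=1$ also forces $f(\sigma)\neq 0$, so $\hat{f}(\tens{S})$ and hence $f^{\Diamond}(\tens{A})$ are genuinely invertible.

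The delicate point I expect to be the main obstacle is the well-definedness of $f^{\Diamond}(\tens{A}^{-1})$ through the factorization $\tens{V}*\tens{S}^{-1}*\tens{U}^{H}$: the reciprocal singular tubes in $\tens{S}^{-1}$ need not be arranged in nonincreasing order, so this is not literally the ordered T-SVD. I would handle this exactly as in the proof of Corollary~\ref{cor1-3}, inserting a permutation tensor $\tens{P}$ so that $\tens{P}*\tens{S}^{-1}*\tens{P}^{\top}$ is properly ordered and invoking the invariance of $f^{\Diamond}$ under unitary conjugation (Lemma~\ref{lem1-6}(2)) to show the value is independent of the ordering. Since the functional equation guarantees $f$ never vanishes on the (strictly positive) singular values, every reciprocal is legitimate; the extra hypothesis $f(0)=0$ is then only needed to keep $f^{\Diamond}$ defined in degenerate limiting cases and plays no role once $\tens{A}$ is known to be invertible.
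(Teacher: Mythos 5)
Your proposal follows essentially the same route as the paper's proof: first the adjoint identity $f^{\Diamond}(\tens{A}^{\star})=f^{\Diamond}(\tens{A})^{\star}$ via unitarity of $\tens{I}$, $\Sigma_{a,b}$, $\tens{R}_n$, $\tens{J}$ and Lemma \ref{lem1-6}, then the reduction to $f^{\Diamond}(\tens{A}^{-1})=f^{\Diamond}(\tens{A})^{-1}$ through the T-SVD factors $\tens{V}*\tens{S}^{-1}*\tens{U}^{H}$ and the functional equation $f(x)f(1/x)=1$. Your version is in fact more complete than the paper's, which asserts $f^{\Diamond}(\tens{S}^{-1})=f^{\Diamond}(\tens{S})^{-1}$ without the Fourier-domain verification or the permutation/ordering argument that you supply.
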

\begin{proof}
Since $\tens{R}_n$, $\tens{J}$ and $\Sigma_{a,b}$ are unitary, by Lemma \ref{lem1-6} we have
$$
f^{\Diamond}(\tens{A}^{\star})=
\left\{
\begin{aligned}
&\tens{T}^{-1}*f^{\Diamond}(\tens{A})^{\top}*\tens{T}=f^{\Diamond}(\tens{A})^{\star}, \  {\rm for\ bilinear\ forms},\\
&\tens{T}^{-1}*f^{\Diamond}(\tens{A})^{H}*\tens{T}=f^{\Diamond}(\tens{A})^{\star}, \  {\rm for\ sesquilinear\ forms}.\\
\end{aligned}
\right.
$$
Hence, for $\tens{A}$ in each of the above tensor automorphism groups $\mathbb{G}=\{\tens{G}:\tens{G}^{\star}=\tens{G}^{-1} \}$, we have
$$
\begin{aligned}
&f^{\Diamond}(\tens{A})^{\star}=f^{\Diamond}(\tens{A}^{\star})=f^{\Diamond}(\tens{A}^{-1})\\
=&\ten{V}_r* f^{\Diamond}(\tens{S}^{-1})*\tens{U}_r^H\\
=&\ten{V}_r *f^{\Diamond}(\tens{S})^{-1}*\tens{U}_r^H\\
=&f^{\Diamond}(\tens{A})^{-1}.
\end{aligned}
$$
\end{proof}
\begin{remark}\label{rem1-9} It could be noticed that for any unitary tensor $\tens{A}$, we have $f^{\Diamond}(\tens{A})=f(1)\tens{A}$, therefore $f^{\Diamond}(\tens{A})=\tens{A}$ for any function $f$ satisfying $f(1)=1$. Hence, some generalized tensor functions may be trivial functions for unitary tensors. A complete characterization of all (meromorphic) functions satisfying the condition that $f(x)f(\frac{1}{x})=1$ can be found in \cite{Higham1}.
\end{remark}
\begin{table}[htbp]
	\centering
	\caption{Structured tensors associated with certain bilinear and sesquilinear forms}
	\label{table1}
	\begin{tabular}{|c|c|c|}
		\hline
Space & $\tens{T}$ &Automorphism Group $\mathbb{G}=\{\tens{G}:\tens{G}^{\star}=\tens{G}^{-1} \}$  \\
		\hline
		\multicolumn{3}{|c|}{Bilinear Forms} \\
		\hline
$\mathbb{R}^{n\times 1 \times p} $  &  $\tens{I}$    & Real orthogonals      \\
		\hline
$\mathbb{C}^{n\times 1 \times p} $  &  $\tens{I}$    & Complex orthogonals     \\
		\hline
$\mathbb{R}^{n\times 1 \times p} $  &  $\Sigma_{a,b}$    & Pseudo-orthogonals    \\
                \hline
$\mathbb{C}^{n\times 1 \times p} $  &  $\Sigma_{a,b}$    & Complex pseudo-orthogonals      \\
                \hline
$\mathbb{R}^{n\times 1 \times p} $  &  $\tens{R}_n$    & Real perplectics  \\
                \hline
$\mathbb{R}^{2n\times 1 \times p} $  &  $\tens{J} $    & Real symplectics \\
	       \hline
$\mathbb{C}^{2n\times 1 \times p} $  &  $\tens{J} $    & Complex symplectics     \\
		\hline
		\multicolumn{3}{|c|}{Sesquilinear Forms} \\	
		\hline
$\mathbb{C}^{n\times 1 \times p} $  &  $\tens{I}$    & Unitaries      \\  		
	         \hline
$\mathbb{C}^{n\times 1 \times p} $  &  $\Sigma_{a,b}$    & Pseudo-unitaries    \\
	         \hline
$\mathbb{C}^{n\times 1 \times p} $  &  $\tens{R}_{n}$    & Complex perplectics   \\
	         \hline
$\mathbb{C}^{2n\times 1 \times p} $  &  $\tens{J}$    & Conjugate symplectics      \\
                 \hline
			
	\end{tabular}
\end{table}
\begin{theorem}\label{the1-11}
Let $\tens{A}\in \mathbb{R}^{m\times n \times p}$ be a nonnegative tensor and $f$ be the odd part of an analytic function which has the Laurant expansion of the form $f(z)=\sum_{k=0}^{\infty} c_k z^k$ with $c_{2k+1}\geq 0$, assumed to be convergent for $|z|<R$ with $R>\norm{\tens{A}}_2$. Then $f^{\Diamond}(\tens{A})$ is well-defined, and $f^{\Diamond}(\tens{A})$ is also a nonnegative tensor.
\end{theorem}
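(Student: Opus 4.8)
The plan is to reduce the statement to the explicit power-series representation of $f^{\Diamond}$ and then show that each surviving term of that series is a nonnegative tensor. Since $f$ is odd, only odd powers appear, and by Theorem~\ref{the1-6} (taking $z_0=0$, so that $f^{(k)}(0)/k!=c_k$) together with Corollary~\ref{cor1-9}, one obtains
$$
f^{\Diamond}(\tens{A})=\sum_{k=0}^{\infty}c_{2k+1}\,(\tens{A}*\tens{A}^{\top})^{k}*\tens{A},
$$
where $c_{2k+1}\geq 0$ by hypothesis. It therefore suffices to prove that every tensor $(\tens{A}*\tens{A}^{\top})^{k}*\tens{A}$ is entrywise nonnegative, since then $f^{\Diamond}(\tens{A})$ is a limit of nonnegative linear combinations of nonnegative tensors and is itself nonnegative.

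The key structural fact I would establish first is that the T-product preserves nonnegativity: if $\tens{X}$ and $\tens{Y}$ are nonnegative, then so is $\tens{X}*\tens{Y}$. This follows directly from $\tens{X}*\tens{Y}=\fold(\bcirc(\tens{X})\unfold(\tens{Y}))$. When $\tens{X}$ is nonnegative each of its frontal slices is a nonnegative matrix, so the block circulant matrix $\bcirc(\tens{X})$ is entrywise nonnegative; likewise $\unfold(\tens{Y})$ is a nonnegative matrix; the product of two nonnegative matrices is nonnegative, and $\fold$ merely reshapes entries, so $\tens{X}*\tens{Y}\geq 0$. I would also record that $\tens{A}^{\top}$ is nonnegative whenever $\tens{A}$ is, since the transpose operation only transposes each frontal slice and permutes the slice order, neither of which can create a negative entry.

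Combining these observations, $\tens{A}*\tens{A}^{\top}$ is nonnegative, every T-power $(\tens{A}*\tens{A}^{\top})^{k}$ is nonnegative as a repeated T-product of nonnegative F-square tensors, and $(\tens{A}*\tens{A}^{\top})^{k}*\tens{A}$ is nonnegative. Because each coefficient $c_{2k+1}$ is nonnegative, every partial sum is a nonnegative tensor, and entrywise nonnegativity is preserved in the limit. For well-definedness I would invoke the convergence argument of Theorem~\ref{the1-6}: the relevant quantities $c_j^{(i)}$ are singular-value data of $\bcirc(\tens{A})$, so $|c_j^{(i)}|\leq\norm{\tens{A}}_2<R$ for all $i,j$, which is exactly the condition ensuring that the defining series converges in the tensor norm of Definition~\ref{def1-6}.

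The main obstacle is not any single hard estimate but rather making the nonnegativity-of-T-product lemma airtight: one must verify explicitly that $\bcirc$ produces a genuinely nonnegative matrix from a nonnegative tensor, so that no cancellation can occur inside the matrix product, and that passing from finite partial sums to the infinite series does not destroy nonnegativity. Both points are routine once the matrix-level nonnegativity is stated carefully, so the bulk of the argument is bookkeeping rather than analysis.
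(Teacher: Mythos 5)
Your proof is correct and follows essentially the same route as the paper: both reduce $f^{\Diamond}(\tens{A})$ to the odd power series $\sum_{k} c_{2k+1}\,(\tens{A}*\tens{A}^{\top})^{k}*\tens{A}$ and conclude nonnegativity termwise, with well-definedness coming from the radius-of-convergence hypothesis. Your explicit lemma that $\bcirc$ and $\unfold$ send nonnegative tensors to entrywise nonnegative matrices (so that the T-product and transpose preserve nonnegativity) is precisely the justification the paper leaves implicit in its final ``$\geq 0$'' step, so your write-up is if anything the more complete one.
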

\begin{proof}
Without loss of generality, we assume $f$ is an odd function which has the Laurant expansion
$$
f(z)=\sum_{k=0}^{\infty} c_{2k+1} z^{2k+1}
$$
with coefficients $c_{2k+1}\geq 0$. The condition on the radius of convergence of the Laurant series guarantees that $f^{\Diamond}(\tens{A})$ is well-defined. Since $\tens{A}$ has the T-CSVD
$$
\tens{A}=\tens{U}_{(r)}*\Sigma_{(r)}*\tens{V}_{(r)}^{H},
$$
then $\left(\tens{A}*\tens{A}^{\top}\right)^{2k}*\tens{A}=\tens{U}_{(r)}*\Sigma_{(r)}^{2k+1}*\tens{V}_{(r)}^{H}$.
It turns out to be that
$$
f^{\Diamond}(\tens{A})=\tens{U}_{(r)}*\left(\sum_{k=0}^{\infty}c_{2k+1}\Sigma_{(r)}^{2k+1}\right)*\tens{V}_{(r)}^{H}\geq 0.
$$
\end{proof}
\begin{definition}\label{def1-11}{\rm (Permutation tensor)} A tensor $\tens{P}\in \mathbb{R}^{n\times n\times p}$ is called a permutation tensor, if its first frontal slice is a permutation matrix and other frontal slices are all zeros.
\end{definition}
It is obvious that permutation tensors are all orthogonal tensors, i.e.,
$$
\tens{P}*\tens{P}^{\top}=\tens{P}^{\top}*\tens{P}=\tens{I}.
$$\par
The next theorem shows that the generalized tensor function preserves zero slices in certain positions of tensors.
\begin{theorem}\label{the1-12} Let $\tens{A}\in \mathbb{C}^{n\times n \times p}$ be a complex tensor and $f^{\Diamond}(\tens{A})$ be well-defined.\par
{\rm (1)} If the $i$th lateral (horizontal) slice of $\tens{A}$ consists of all zeros, then the $i$-th lateral (horizontal) slice of $f^{\Diamond}(\tens{A})$ consists of all zeros.\par
 {\rm (2)} If there exist permutation tensors $\tens{P}\in \mathbb{R}^{n\times n\times p}$ and $\tens{Q}\in \mathbb{R}^{n\times n\times p}$ such that $\tens{P}*\tens{A}*\tens{Q}$ is F-block diagonal, then $\tens{P}*f^{\Diamond}(\tens{A})*\tens{Q}$ is also a F-block diagonal tensor.
 \end{theorem}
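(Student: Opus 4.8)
The plan is to handle the two parts separately: part (1) follows from the multiplicative representation of $f^{\Diamond}$ in Corollary \ref{cor1-4}, while part (2) reduces, via the orthogonal-invariance of Corollary \ref{cor1-2}, to showing that $f^{\Diamond}$ preserves F-block diagonality, which I would establish through block tensor multiplication (Theorem \ref{the1-7}) and Lemma \ref{lem1-5}(3).

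\textbf{Part (1).} First I would invoke Corollary \ref{cor1-4}, which gives the two representations $f^{\Diamond}(\tens{A})=f(\sqrt{\tens{A}*\tens{A}^{H}})*(\sqrt{\tens{A}*\tens{A}^{H}})^{\dag}*\tens{A}$ and $f^{\Diamond}(\tens{A})=\tens{A}*(\sqrt{\tens{A}^{H}*\tens{A}})^{\dag}*f(\sqrt{\tens{A}^{H}*\tens{A}})$. The key elementary fact is that the T-product propagates zero slices. For the lateral case, write $f^{\Diamond}(\tens{A})=\tens{B}*\tens{A}$ with $\tens{B}=f(\sqrt{\tens{A}*\tens{A}^{H}})*(\sqrt{\tens{A}*\tens{A}^{H}})^{\dag}$, so that $\unfold(\tens{B}*\tens{A})=\bcirc(\tens{B})\,\unfold(\tens{A})$; if the $i$-th lateral slice of $\tens{A}$ vanishes then the $i$-th column of $\unfold(\tens{A})$ vanishes, hence so does the $i$-th column of $\bcirc(\tens{B})\,\unfold(\tens{A})$, i.e. the $i$-th lateral slice of $f^{\Diamond}(\tens{A})$ is zero. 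The horizontal case is dual: writing $f^{\Diamond}(\tens{A})=\tens{A}*\tens{D}$ from the second representation, a zero $i$-th horizontal slice forces the rows in positions $i,n+i,\ldots,(p-1)n+i$ of $\bcirc(\tens{A})$ to vanish, and therefore the corresponding rows of $\bcirc(\tens{A})\bcirc(\tens{D})=\bcirc(\tens{A}*\tens{D})$ vanish as well, which says exactly that the $i$-th horizontal slice of $f^{\Diamond}(\tens{A})$ is zero.

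\textbf{Part (2).} Since permutation tensors are orthogonal ($\tens{P}*\tens{P}^{\top}=\tens{P}^{\top}*\tens{P}=\tens{I}$, cf. Definition \ref{def1-11}), Corollary \ref{cor1-2}(2) yields $\tens{P}*f^{\Diamond}(\tens{A})*\tens{Q}=f^{\Diamond}(\tens{P}*\tens{A}*\tens{Q})=f^{\Diamond}(\tens{B})$, where $\tens{B}:=\tens{P}*\tens{A}*\tens{Q}$ is F-block diagonal. Thus it suffices to prove that $f^{\Diamond}$ preserves F-block diagonality. Writing $\tens{B}=\begin{bmatrix}\tens{B}_1 & 0\\ 0 & \tens{B}_2\end{bmatrix}$ and taking T-SVDs $\tens{B}_j=\tens{U}_j*\tens{S}_j*\tens{V}_j^{H}$, Theorem \ref{the1-7} shows that the block-diagonal composites $\tens{U}=\begin{bmatrix}\tens{U}_1 & 0\\ 0 & \tens{U}_2\end{bmatrix}$, $\tens{S}$, $\tens{V}$ satisfy $\tens{B}=\tens{U}*\tens{S}*\tens{V}^{H}$; here $\tens{U},\tens{V}$ are unitary and $\tens{S}$ is F-diagonal, so this is a legitimate T-SVD (any reordering of the singular tubes is harmless by Corollary \ref{cor1-3}). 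The decisive step is that $\hat f$ respects the block structure: the discrete Fourier blocks $\Sigma_i$ of $\bcirc(\tens{S})$ inherit the form $\Sigma_i=\Sigma_i^{[1]}\oplus\Sigma_i^{[2]}$, the matrix map $\tilde f$ preserves direct sums by Lemma \ref{lem1-5}(3), and hence $\hat f(\tens{S})=\hat f(\tens{S}_1)\oplus\hat f(\tens{S}_2)$ is again F-block diagonal. Applying Theorem \ref{the1-7} once more gives $f^{\Diamond}(\tens{B})=\tens{U}*\hat f(\tens{S})*\tens{V}^{H}=f^{\Diamond}(\tens{B}_1)\oplus f^{\Diamond}(\tens{B}_2)$, which is F-block diagonal.

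I expect the main obstacle to be this last block-preservation step: one must check carefully that passing through the Fourier factorization of $\bcirc(\tens{S})$ keeps the block-diagonal pattern at the level of the individual blocks $\Sigma_i$, so that Lemma \ref{lem1-5}(3) can be applied blockwise, and that the block-diagonal factors assembled via Theorem \ref{the1-7} really furnish a T-SVD whose induced $f^{\Diamond}$ agrees with the value computed from any ordered T-SVD. An alternative that avoids the explicit Fourier bookkeeping is to use the commutation $\bcirc(f^{\Diamond}(\cdot))=f^{\Diamond}(\bcirc(\cdot))$ with a perfect-shuffle permutation $\Pi$ satisfying $\Pi\,\bcirc(\tens{B})\,\Pi^{\top}=\bcirc(\tens{B}_1)\oplus\bcirc(\tens{B}_2)$ and then Lemma \ref{lem1-5}(2),(3); the price is that one must first justify the $\bcirc$--$f^{\Diamond}$ commutation, which the paper develops separately.
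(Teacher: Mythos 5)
Your proof is correct, but it takes a genuinely different route from the paper's, most visibly in part (1). The paper proves (1) by normalizing (via a permutation tensor) so that the zero lateral slice is the last one, writing $\tens{A}=[\,\widehat{\tens{A}}\;\;0\,]$, assembling an explicit block-structured T-SVD of $\tens{A}$ from a T-SVD of $\widehat{\tens{A}}$ using the block multiplication of Theorem \ref{the1-7}, and then reading off that the last lateral slice of $\tens{V}_{(r)}$ --- hence of $f^{\Diamond}(\tens{A})=\tens{U}_{(r)}*f^{\Diamond}(\Sigma_{(r)})*\tens{V}_{(r)}^{H}$ --- is zero; the horizontal case is obtained by passing to $\tens{A}^{H}$ exactly as you do. Your argument instead factors $f^{\Diamond}(\tens{A})$ as $\tens{B}*\tens{A}$ (resp.\ $\tens{A}*\tens{D}$) using Corollary \ref{cor1-4} and observes that the T-product propagates zero columns of $\unfold(\cdot)$ and zero rows of $\bcirc(\cdot)$; this is more elementary and avoids constructing a special T-SVD, at the cost of leaning on Corollary \ref{cor1-4}, whose own justification in the paper is only sketched. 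For part (2) the paper offers nothing beyond the one-line remark that block tensor multiplication gives the result, so your detailed reduction --- orthogonality of permutation tensors plus Corollary \ref{cor1-2}(2) to reduce to an F-block-diagonal $\tens{B}$, then direct-sum preservation of the generalized matrix function (Lemma \ref{lem1-5}(3)) applied blockwise to the Fourier blocks of $\bcirc(\tens{B})$ --- is a genuine filling-in of a gap rather than a divergence. Your own caveat about the well-definedness of $f^{\Diamond}$ under reordering of singular tubes is the right one to flag; the cleanest way to discharge it is your ``alternative'' via $\bcirc(f^{\Diamond}(\tens{B}))=f^{\Diamond}(\bcirc(\tens{B}))$, which the paper establishes independently as Theorem \ref{the1-14}.
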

\begin{proof}
{\rm (1)} Without loss of generality, we may assume the last lateral slice of $\tens{A}$ is $0$, since for any permutation tensor $\tens{P}$, we have
$$
f^{\Diamond}(\tens{A}*\tens{P})=f^{\Diamond}(\tens{A})*\tens{P}.
$$
We write $\tens{A}=\left[\widehat{\tens{A}}\ \ \ 0\right]$ and assume that $\widehat{\tens{A}}$ has T-SVD decomposition
$$
\widehat{\tens{A}}=\widehat{\tens{U}}*\widehat{\Sigma}*\widehat{\tens{V}}^H.
$$
It follows this equation that $\tens{A}$ has the T-SVD decomposition via block tensor multiplication
$$
\tens{A}=
\begin{bmatrix}
\widehat{\tens{A}}&0
\end{bmatrix}=
\widehat{\tens{U}}*
\begin{bmatrix}
\widehat{\Sigma}&0
\end{bmatrix}*
\begin{bmatrix}
\widehat{\tens{V}}&0\\
0&{\textbf 1}
\end{bmatrix}=
\tens{U}*\Sigma*\tens{V}^H,
$$
where the `${\textbf 1}$' in the tensor block is a tube tensor whose first element is 1 and the other element are all zeros.\par
We assume that $\tens{A}$ has tubal rank $r$, we have the T-CSVD
$$
f^{\Diamond}(\tens{A})=\tens{U}_{(r)}*f^{\Diamond}(\Sigma_{(r)})*\tens{V}_{(r)}^H.
$$
We can find the last lateral slice of $\tens{V}_{(r)}$ consists of all zeros, so it comes to the conclusion that the last lateral slice of $f^{\Diamond}(\tens{A})$ is also zero.\par
For the similar reason, if $\tens{A}$ has zero horizontal slices, then we can use the same method for $\tens{A}^H$. By using the conclusion that $f^{\Diamond}(\tens{A})^H=f^{\Diamond}(\tens{A}^H)$, we can get the corresponding result.\par
{\rm (2)} By using the tensor block multiplication via T-block, we can easily get the result.
\end{proof}

\section{Isomorphisms and Invariants}
\subsection{Complex-to-real isomorphism}
In this section, we show that the generalized tensor functions are well-behaved with respect to the canonical isomorphism between the algebra of $n\times n\times p $ complex tensors and the subalgebra of the algebra of the real $2n\times 2n\times p$ tensors consisting of all block tensors of the form:
$$
\begin{bmatrix}
\tens{B}& -\tens{C}\\
\tens{C}& \tens{B}
\end{bmatrix},
$$
where $\tens{B}$ and $\tens{C}$ are tensors in $\mathbb{R}^{n\times n \times p}$.
\begin{theorem}\label{the1-13} Let $\tens{A}=\tens{B}+{\textbf i}\ \tens{C}\in \mathbb{C}^{n\times n \times p}$, where $\tens{B}$ and $\tens{C}$ are real tensors. Let $f: \mathbb{R}\rightarrow \mathbb{R}$ be a scalar function satisfies $f(0)=0$. $f^{\Diamond}: \mathbb{C}^{n\times n\times p}\rightarrow \mathbb{C}^{n\times n\times p}$ is the induced generalized tensor function. Let $\Phi: \mathbb{C}^{n\times n\times p}\rightarrow \mathbb{R}^{2n\times 2n\times p}$ be the mapping:
$$
\Phi(\tens{A})=
\begin{bmatrix}
\tens{B}& -\tens{C}\\
\tens{C}& \tens{B}
\end{bmatrix}.
$$
We denote by $f^{\Diamond}$ the generalized tensor function from $\mathbb{R}^{2n\times 2n\times p}$ to $\mathbb{R}^{2n\times 2n\times p}$ induced by $f$. Then $f^{\Diamond}(\Phi(\tens{A}))$ is well defined and $f^{\Diamond}$ commutes with $\Phi$:
\begin{equation}
f^{\Diamond}(\Phi(\tens{A}))=\Phi(f^{\Diamond}(\tens{A})).
\end{equation}
That is to say, we have the following commutative diagram:
$$
\begin{array}{ccc}
\mathbb{C}^{n\times n\times p}\ (*,+, \tens{I}_n,0) & \xrightarrow{\text{$f^{\Diamond}$}}&\mathbb{C}^{n\times n\times p}\ (*,+, \tens{I}_n,0) \\
\Bigg\downarrow{\text{$\Phi$}} && \Bigg\downarrow{\text{$\Phi$}}\\
\mathbb{R}^{2n\times 2n\times p} \ (*,+, \tens{I}_{2n},0)& \xrightarrow{\text{$f^{\Diamond}$}}&\mathbb{R}^{2n\times 2n\times p}\ (*,+, \tens{I}_{2n},0)
\end{array}
$$
\end{theorem}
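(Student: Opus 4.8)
The plan is to reduce the tensor identity to a matrix identity through the operator $\bcirc$ together with the Fourier block-diagonalization, and then to settle the matrix case by a single singular value decomposition argument. First I would record that $\Phi$ is a homomorphism of the T-product algebra. Writing $\tens{A}=\tens{B}+{\textbf i}\tens{C}$ and $\tens{A}'=\tens{B}'+{\textbf i}\tens{C}'$, Theorem \ref{the1-7} (tensor block multiplication) gives $\Phi(\tens{A}*\tens{A}')=\Phi(\tens{A})*\Phi(\tens{A}')$, while additivity is immediate and, since $\tens{B},\tens{C}$ are real, Definition \ref{def1-2} yields $\Phi(\tens{A}^{H})=\Phi(\tens{A})^{\top}$ and $\Phi(\tens{I}_{n})=\tens{I}_{2n}$. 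In particular $\Phi$ sends unitary tensors to real orthogonal tensors, and it sends the singular values of $\tens{A}$ to those of $\Phi(\tens{A})$ with multiplicities doubled, so $f^{\Diamond}(\Phi(\tens{A}))$ is well defined whenever $f^{\Diamond}(\tens{A})$ is.

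Next I would transport everything to matrices. Because the generalized matrix function commutes with unitary conjugation and respects direct sums (Lemma \ref{lem1-5}(2),(3)), block-diagonalizing $\bcirc(\tens{A})$ by $F_{p}\otimes I$ gives $\bcirc(f^{\Diamond}(\tens{A}))=f^{\Diamond}(\bcirc(\tens{A}))$, where on the right $f^{\Diamond}$ is the generalized \emph{matrix} function; the same identity holds for $\Phi(\tens{A})$. Let $\varphi(M)=\bigl[\begin{smallmatrix}\Re M&-\Im M\\ \Im M&\Re M\end{smallmatrix}\bigr]$ be the matrix complex-to-real embedding. Tracking how the $2\times 2$ spatial block structure of the frontal slices of $\Phi(\tens{A})$ interleaves with the circulant structure, there is a real orthogonal perfect-shuffle permutation $P$ with $\bcirc(\Phi(\tens{A}))=P\,\varphi(\bcirc(\tens{A}))\,P^{\top}$; here $\varphi$ is applied to $\bcirc(\tens{A})=\bcirc(\tens{B})+{\textbf i}\,\bcirc(\tens{C})$, whose real and imaginary parts $\bcirc(\tens{B}),\bcirc(\tens{C})$ are genuinely real, so $\varphi(\bcirc(\tens{A}))$ is the honest embedding. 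Using Lemma \ref{lem1-5}(2) once more to pass $f^{\Diamond}$ through the orthogonal $P$, the whole statement collapses to the matrix identity $f^{\Diamond}(\varphi(M))=\varphi(f^{\Diamond}(M))$ with $M=\bcirc(\tens{A})$.

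I would then prove this matrix identity directly. Take an SVD $M=U\Sigma V^{H}$ with $\Sigma$ real nonnegative diagonal. Since $\varphi$ is a real-algebra homomorphism with $\varphi(M^{H})=\varphi(M)^{\top}$ and $\varphi(I)=I$, one gets $\varphi(M)=\varphi(U)\,\varphi(\Sigma)\,\varphi(V)^{\top}$, where $\varphi(U),\varphi(V)$ are real orthogonal and $\varphi(\Sigma)=\bigl[\begin{smallmatrix}\Sigma&0\\0&\Sigma\end{smallmatrix}\bigr]$ is real nonnegative diagonal; this is an SVD of $\varphi(M)$. As $f:\mathbb{R}\to\mathbb{R}$ with $f(0)=0$, we have $f(\varphi(\Sigma))=\varphi(f(\Sigma))$, the hypothesis $f(0)=0$ absorbing the zero singular values, whence $f^{\Diamond}(\varphi(M))=\varphi(U)\,\varphi(f(\Sigma))\,\varphi(V)^{\top}=\varphi\bigl(U\,f(\Sigma)\,V^{H}\bigr)=\varphi(f^{\Diamond}(M))$. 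Combining the three steps and applying $\bcirc^{-1}$ yields $f^{\Diamond}(\Phi(\tens{A}))=\Phi(f^{\Diamond}(\tens{A}))$, and the asserted commutative diagram follows.

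I expect the main obstacle to be the bookkeeping in the second step: verifying that $\bcirc$ intertwines the tensor map $\Phi$ with the matrix map $\varphi$ up to the perfect-shuffle permutation $P$, i.e.\ that the spatial doubling introduced by $\Phi$ and the $p$-fold circulant replication introduced by $\bcirc$ simply swap nesting order after relabeling. One must confirm that $P$ is exactly this exchange permutation and that it is orthogonal, so that Lemma \ref{lem1-5}(2) applies. The tempting per-Fourier-block route is in fact messier, since the Fourier blocks of $\Phi(\tens{A})$ have the form $\bigl[\begin{smallmatrix}B_i&-C_i\\ C_i&B_i\end{smallmatrix}\bigr]$ with \emph{complex} $B_i,C_i$ and are not literal embeddings; this is precisely why keeping $\bcirc(\tens{B}),\bcirc(\tens{C})$ real in the permutation formulation is the cleaner path.
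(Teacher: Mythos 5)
Your proof is correct, but it is organized quite differently from the paper's. The paper stays entirely at the tensor level: it writes the T-SVD of $\tens{A}$ as $(\tens{U}_1+{\textbf i}\,\tens{U}_2)*\Sigma*(\tens{V}_1+{\textbf i}\,\tens{V}_2)^{H}$, separates real and imaginary parts, verifies by hand (from $\tens{U}*\tens{U}^{H}=\tens{I}$) that $\bigl[\begin{smallmatrix}\tens{U}_1&-\tens{U}_2\\ \tens{U}_2&\tens{U}_1\end{smallmatrix}\bigr]$ and its $\tens{V}$-counterpart are orthogonal, and then computes both $\Phi(f^{\Diamond}(\tens{A}))$ and $f^{\Diamond}(\Phi(\tens{A}))$ explicitly via the block tensor multiplication of Theorem \ref{the1-7}, matching the four blocks entry by entry. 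You instead push everything down to matrices: you invoke the $\bcirc$--intertwining $\bcirc(f^{\Diamond}(\cdot))=f^{\Diamond}(\bcirc(\cdot))$ (this is Theorem \ref{the1-14}, which appears \emph{after} the statement in the paper but whose proof is logically independent, so there is no circularity), conjugate by a perfect-shuffle permutation to identify $\bcirc(\Phi(\tens{A}))$ with $\varphi(\bcirc(\tens{A}))$, and settle the matrix identity $f^{\Diamond}(\varphi(M))=\varphi(f^{\Diamond}(M))$ by a single SVD computation. What your route buys is modularity: the orthogonality of the doubled factors is inherited from the standard matrix fact rather than re-verified for tensors, and the same shuffle argument would dispose of other ``structure-doubling'' embeddings. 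What it costs is the bookkeeping you yourself flag --- the verification that the spatial doubling of $\Phi$ and the circulant replication of $\bcirc$ commute up to exactly that orthogonal shuffle $P$ --- plus a dependence on the later isomorphism theorem; the paper's direct computation avoids both. One small point common to both arguments and worth a sentence in a final write-up: $\varphi(\Sigma)=\bigl[\begin{smallmatrix}\Sigma&0\\0&\Sigma\end{smallmatrix}\bigr]$ has its singular values interleaved rather than sorted, so you should note (as the paper implicitly does elsewhere, e.g.\ in Corollary \ref{cor1-3}) that the generalized function is unaffected by reordering the singular values via a permutation absorbed into the unitary factors.
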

\begin{proof}
By using the T-SVD of tensors, we have
$$
\begin{aligned}
\tens{A}&=\tens{B}+{\textbf i}\ \tens{C}\\
&=\tens{U}*\Sigma*\tens{V}^H\\
&=(\tens{U}_1+{\textbf i}\ \tens{U}_2)*\Sigma*(\tens{V}_1+{\textbf i}\ \tens{V}_2)^H\\
&=(\tens{U}_1+{\textbf i}\ \tens{U}_2)*\Sigma*(\tens{V}_1^{\top}-{\textbf i}\ \tens{V}_2^{\top})\\
&=\tens{U}_1*\Sigma*\tens{V}_1^{\top}+\tens{U}_2*\Sigma*\tens{V}_2^{\top}+{\textbf i}\ (\tens{U}_2*\Sigma*\tens{V}_1^{\top}-\tens{U}_1*\Sigma*\tens{V}_2^{\top}),\\
\end{aligned}
$$
and
$$
\begin{aligned}
f^{\Diamond}(\tens{A})=&\tens{U}_1*f^{\Diamond}(\Sigma)*\tens{V}_1^{\top}+\tens{U}_2*f^{\Diamond}(\Sigma)*\tens{V}_2^{\top}\\
+&{\textbf i}\  (\tens{U}_2*f^{\Diamond}(\Sigma)*\tens{V}_1^{\top}-\tens{U}_1*f^{\Diamond}(\Sigma)*\tens{V}_2^{\top}).
\end{aligned}
$$\par
Hence, by the above block tensor multiplication theorem, we obtain
$$
\Phi(f^{\Diamond}(\tens{A}))=
\begin{bmatrix}
\tens{U}_1*f^{\Diamond}(\Sigma)*\tens{V}_1^{\top}+\tens{U}_2*f^{\Diamond}(\Sigma)*\tens{V}_2^{\top} & -\tens{U}_2*f^{\Diamond}(\Sigma)*\tens{V}_1^{\top}+\tens{U}_1*f^{\Diamond}(\Sigma)*\tens{V}_2^{\top}\\
\tens{U}_2*f^{\Diamond}(\Sigma)*\tens{V}_1^{\top}-\tens{U}_1*f^{\Diamond}(\Sigma)*\tens{V}_2^{\top} & \tens{U}_1*f^{\Diamond}(\Sigma)*\tens{V}_1^{\top}+\tens{U}_2*f^{\Diamond}(\Sigma)*\tens{V}_2^{\top}\\
\end{bmatrix}.
$$\par
Applying tensor singular value decomposition to $\Phi(\tens{A})=
\begin{bmatrix}
\tens{B}& -\tens{C}\\
\tens{C}& \tens{B}
\end{bmatrix}$,
we have the decomposition:
$$
\begin{bmatrix}
\tens{B}& -\tens{C}\\
\tens{C}& \tens{B}
\end{bmatrix}=
\begin{bmatrix}
\tens{U}_1& -\tens{U}_2\\
\tens{U}_2& \tens{U}_1
\end{bmatrix}*
\begin{bmatrix}
\Sigma& 0\\
0& \Sigma
\end{bmatrix}*
\begin{bmatrix}
\tens{V}_1& -\tens{V}_2\\
\tens{V}_2& \tens{V}_1
\end{bmatrix}^{\top}.
$$\par
Since $\tens{U}$ is a unitary tensor, we obtain
$$(\ten{U}_1+{\textbf i}\ \tens{U}_2)*(\ten{U}_1^{\top}-{\textbf i}\ \tens{U}_2^{\top})=\tens{I},
$$
and
$$
(\ten{U}_1^{\top}-{\textbf i}\ \tens{U}_2^{\top})*(\ten{U}_1+{\textbf i}\ \tens{U}_2)=\tens{I}.
$$
Therefore, $\ten{U}_1*\ten{U}_1^{\top}+\ten{U}_2*\ten{U}_2^{\top}=\tens{I}$ and $\ten{U}_1*\ten{U}_2^{\top}=\ten{U}_2*\ten{U}_1^{\top}$.\par
Thus $\begin{bmatrix}
\tens{U}_1& -\tens{U}_2\\
\tens{U}_2& \tens{U}_1
\end{bmatrix}$ is an orthogonal tensor. Similarly, $\begin{bmatrix}
\tens{V}_1& -\tens{V}_2\\
\tens{V}_2& \tens{V}_1
\end{bmatrix}$ is also an orthogonal tensor.\par
On the other hand, it comes to
$$
\begin{aligned}
f^{\Diamond}(\Phi(\tens{A}))&=\begin{bmatrix}
\tens{U}_1& -\tens{U}_2\\
\tens{U}_2& \tens{U}_1
\end{bmatrix}*
\begin{bmatrix}
f^{\Diamond}(\Sigma)& 0\\
0& f^{\Diamond}(\Sigma)
\end{bmatrix}*
\begin{bmatrix}
\tens{V}_1^{\top}& \tens{V}_2^{\top}\\
-\tens{V}_2^{\top}& \tens{V}_1^{\top}
\end{bmatrix}\\
&=\begin{bmatrix}
\tens{U}_1*f^{\Diamond}(\Sigma)*\tens{V}_1^{\top}+\tens{U}_2*f^{\Diamond}(\Sigma)*\tens{V}_2^{\top} & -\tens{U}_2*f^{\Diamond}(\Sigma)*\tens{V}_1^{\top}+\tens{U}_1*f^{\Diamond}(\Sigma)*\tens{V}_2^{\top}\\
\tens{U}_2*f^{\Diamond}(\Sigma)*\tens{V}_1^{\top}-\tens{U}_1*f^{\Diamond}(\Sigma)*\tens{V}_2^{\top} & \tens{U}_1*f^{\Diamond}(\Sigma)*\tens{V}_1^{\top}+\tens{U}_2*f^{\Diamond}(\Sigma)*\tens{V}_2^{\top}\\
\end{bmatrix}.
\end{aligned}
$$
Therefore,
$$
f^{\Diamond}(\Phi(\tens{A}))=\Phi(f^{\Diamond}(\tens{A})).
$$
\end{proof}
This theorem gives us a transformation between the function $f^{\Diamond}: \mathbb{C}^{n\times n\times p}\rightarrow \mathbb{C}^{n\times n\times p}$ and $f^{\Diamond}:\mathbb{R}^{2n\times 2n\times p}\rightarrow \mathbb{R}^{2n\times 2n\times p}$ induced by the same scalar function $f: \mathbb{C}\rightarrow \mathbb{C}$. This theorem will be very useful to avoid complex number computations of generalized tensor functions. Since the map $\Phi$  is invertible, we also have the following commutative diagram:
$$
\begin{array}{ccc}
\mathbb{C}^{n\times n\times p}\ (*,+, \tens{I}_n,0) & \xrightarrow{\text{$f^{\Diamond}$}}&\mathbb{C}^{n\times n\times p}\ (*,+, \tens{I}_n,0) \\
\Bigg\downarrow{\text{$\Phi$}} &\circlearrowleft& \Bigg\uparrow{\text{$\Phi^{-1}$}}\\
\mathbb{R}^{2n\times 2n\times p} \ (*,+, \tens{I}_{2n},0)& \xrightarrow{\text{$f^{\Diamond}$}}&\mathbb{R}^{2n\times 2n\times p}\ (*,+, \tens{I}_{2n},0)
\end{array}
$$

\subsection{Tensor to matrix isomorphism}
It should be noticed that if we have a tensor $\tens{A}\in \mathbb{C}^{m\times n \times p}$, when $p=1$, our definition of generalized tensor function degenerate to the generalized matrix function. We denote the generalized matrix function to be $f^{\Diamond}$. \par
When $p\neq 1$, we want to establish some isomorphism structures between matrices and tensors which might be useful to transfer generalized tensor function problems to generalized matrix function problems. We have the following theorem which shows the $\bcirc$ operator on tensors is an isomorphism between the tensor space $\mathbb{C}^{m\times n \times p}$ and the matrix space $\mathbb{C}^{mp\times np }$.
\begin{theorem}\label{the1-14}
Let $\tens{A}\in \mathbb{C}^{m\times n \times p}$ be a complex tensor and $f:\mathbb{C}\rightarrow \mathbb{C}$ be a scalar function. Denote $f^{\Diamond}$ to be both the induced generalized tensor function and the generalized matrix function. `$\bcirc$' is the tensor block circulant operator. Then $f^{\Diamond}$ commutes with $\bcirc$:
\begin{equation}
f^{\Diamond}(\bcirc(\tens{A}))=\bcirc(f^{\Diamond}(\tens{A})).
\end{equation}
That is to say, we have the following commutative diagram
$$
\begin{array}{ccc}
\mathbb{C}^{m\times n\times p} \ (*,+, \tens{I}_n,0)& \xrightarrow{\text{$f^{\Diamond}$}}&\mathbb{C}^{m\times n\times p} \ (*,+, \tens{I}_n,0)\\
\Bigg\downarrow{\text{$\bcirc$}} && \Bigg\downarrow{\text{$\bcirc$}}\\
\mathbb{C}^{mp\times np} \ (\cdot,+, I_{np},0)& \xrightarrow{\text{$f^{\Diamond}$}}&\mathbb{C}^{mp\times np}\ (\cdot,+, I_{np},0)
\end{array}
$$
\end{theorem}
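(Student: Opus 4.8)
The plan is to reduce both sides of the identity to the same Fourier--block--diagonal expression, using the fact that $\bcirc(\tens{A})$ is block-diagonalized by the unitary Kronecker factors $F_p\otimes I_m$ and $F_p^H\otimes I_n$, and then to invoke the invariance properties of the generalized matrix function recorded in Lemma \ref{lem1-5}. First I would start from the T-SVD $\tens{A}=\tens{U}*\tens{S}*\tens{V}^H$ and apply Lemma \ref{lem1-3}(1) together with the block factorizations of $\bcirc(\tens{U})$, $\bcirc(\tens{S})$ and $\bcirc(\tens{V}^H)$ given in Theorem \ref{the1-1}. Because $(F_p^H\otimes I_m)(F_p\otimes I_m)=I$ and $(F_p^H\otimes I_n)(F_p\otimes I_n)=I$, the product collapses to
$$
\bcirc(\tens{A})=(F_p\otimes I_m)\,{\rm diag}(U_1\Sigma_1 V_1^H,\ldots,U_p\Sigma_p V_p^H)\,(F_p^H\otimes I_n),
$$
in which each block $U_i\Sigma_i V_i^H$ is precisely the matrix SVD of the $i$-th Fourier slice.

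The right-hand side is then immediate. By Lemma \ref{lem1-3}(1) and the defining formula for $\hat f(\tens{S})$ in Theorem \ref{the1-1}, the same cancellation gives
$$
\bcirc(f^{\Diamond}(\tens{A}))=(F_p\otimes I_m)\,{\rm diag}(U_1\tilde f(\Sigma_1)V_1^H,\ldots,U_p\tilde f(\Sigma_p)V_p^H)\,(F_p^H\otimes I_n).
$$
For the left-hand side I would treat $\bcirc(\tens{A})$ as a plain $mp\times np$ matrix. Since $F_p\otimes I_m$ and $F_p^H\otimes I_n$ are unitary, Lemma \ref{lem1-5}(2) lets me factor them out, and Lemma \ref{lem1-5}(3) distributes the generalized matrix function across the direct sum, so that
$$
f^{\Diamond}(\bcirc(\tens{A}))=(F_p\otimes I_m)\,{\rm diag}(f^{\Diamond}(U_1\Sigma_1 V_1^H),\ldots,f^{\Diamond}(U_p\Sigma_p V_p^H))\,(F_p^H\otimes I_n).
$$
A second application of Lemma \ref{lem1-5}(2) to each unitary-flanked block yields $f^{\Diamond}(U_i\Sigma_i V_i^H)=U_i f^{\Diamond}(\Sigma_i)V_i^H$, so the two expressions agree once I verify the block identity $f^{\Diamond}(\Sigma_i)=\tilde f(\Sigma_i)$.

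This last identity is exactly formula (5) read through Lemma \ref{lem1-5}(4): for the nonnegative rectangular diagonal $\Sigma_i$, the expression $f\!\left(\sqrt{\Sigma_i\Sigma_i^H}\right)\!\left(\sqrt{\Sigma_i\Sigma_i^H}\right)^{\dag}\Sigma_i$ reproduces $f$ applied to the nonzero singular values of $\Sigma_i$ and sends the zero ones to $0$, which is precisely the generalized matrix function of $\Sigma_i$ defined through its compact SVD. I expect the only delicate point to be this treatment of vanishing singular values: since the generalized matrix function is built from the compact SVD, I must confirm that the Moore--Penrose factor in $\tilde f(\Sigma_i)$ annihilates the zero singular directions, so that $\tilde f(\Sigma_i)$ and $f^{\Diamond}(\Sigma_i)$ coincide block by block even when some $\Sigma_i$ is rank-deficient. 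Once this bookkeeping is settled, matching the two Fourier--block--diagonal forms is purely formal, and comparing the resulting matrices gives $f^{\Diamond}(\bcirc(\tens{A}))=\bcirc(f^{\Diamond}(\tens{A}))$.
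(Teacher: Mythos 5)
Your proof is correct and follows essentially the same route as the paper: both block-diagonalize $\bcirc(\tens{A})$ via the unitary factors $F_p\otimes I$, apply the unitary-invariance and direct-sum properties of the generalized matrix function (Lemma \ref{lem1-5}(2),(3)) to push $f^{\Diamond}$ onto the Fourier blocks, and match the result against the defining formula for $f^{\Diamond}$ of a tensor. Your version is somewhat more explicit than the paper's, which simply asserts that $\bcirc(f^{\Diamond}(\tens{A}))$ equals the block-diagonal expression ``by definition,'' whereas you verify the block identity $f^{\Diamond}(U_i\Sigma_iV_i^H)=U_i\tilde f(\Sigma_i)V_i^H$ and the rank-deficient case directly.
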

\begin{proof}
$$
\begin{aligned}
f^{\Diamond}(\bcirc(\tens{A}))&=f^{\Diamond}\left(
\begin{bmatrix}
 A^{(1)} &  A^{(p)}  &  A^{(p-1)} & \cdots &  A^{(2)}\\

 A^{(2)} &  A^{(1)}  &  A^{(p)} & \cdots &  A^{(3)}\\

\vdots  & \ddots& \ddots & \ddots & \vdots\\

 A^{(p)} &  A^{(p-1)}  &  \ddots & A^{(2)} &  A^{(1)}\\
\end{bmatrix}
\right)\\
&=f^{\Diamond}\left(
(F_p\otimes I_n)
\begin{bmatrix}
D_1&&&\\
&D_2&&\\
&&\ddots&\\
&&&D_p
\end{bmatrix}
(F_p^H\otimes I_m)
\right)\\
&=(F_p\otimes I_n)f^{\Diamond}\left(
\begin{bmatrix}
D_1&&&\\
&D_2&&\\
&&\ddots&\\
&&&D_p
\end{bmatrix}\right)
(F_p^H\otimes I_m)\\
&=
(F_p\otimes I_n)
\begin{bmatrix}
f^{\Diamond}(D_1)&&&\\
&f^{\Diamond}(D_2)&&\\
&&\ddots&\\
&&&f^{\Diamond}(D_p)
\end{bmatrix}
(F_p^H\otimes I_m).
\end{aligned}
$$
On the other hand,
$$
\begin{aligned}
\bcirc(f^{\Diamond}(\tens{A}))&=\bcirc\left(
\bcirc^{-1}\left(
(F_p\otimes I_n)
\begin{bmatrix}
f^{\Diamond}(D_1)&&&\\
&f^{\Diamond}(D_2)&&\\
&&\ddots&\\
&&&f^{\Diamond}(D_p)
\end{bmatrix}
(F_p^H\otimes I_m)
\right)
\right)\\
&=(F_p\otimes I_n)
\begin{bmatrix}
f^{\Diamond}(D_1)&&&\\
&f^{\Diamond}(D_2)&&\\
&&\ddots&\\
&&&f^{\Diamond}(D_p)
\end{bmatrix}
(F_p^H\otimes I_m).
\end{aligned}
$$
So we have $
f^{\Diamond}(\bcirc(\tens{A}))=\bcirc(f^{\Diamond}(\tens{A})).
$
\end{proof}
\begin{remark}\label{rem1-10}
Theorem \ref{the1-14} also shows that the generalized matrix functions defined by Ben-Israel \cite{Ben1} is the degenerate case of our generalized tensor functions. The characteristics and properties of generalized tensor functions also hold for generalized matrix functions.
\end{remark}
Since the `$\bcirc$' operator is an invertible operator, the following commutative diagram holds:
$$
\begin{array}{ccc}
\mathbb{C}^{n\times m\times p} \ (*,+, \tens{I}_n,0)& \xrightarrow{\text{$f^{\Diamond}$}}&\mathbb{C}^{n\times m\times p} \ (*,+, \tens{I}_n,0)\\
\Bigg\downarrow{\text{$\bcirc$}} &\circlearrowleft& \Bigg\uparrow{\text{$\bcirc^{-1}$}}\\
\mathbb{C}^{np\times mp} \ (\cdot,+, I_{np},0)& \xrightarrow{\text{$f^{\Diamond}$}}&\mathbb{C}^{np\times mp}\ (\cdot,+, I_{np},0)
\end{array}
$$
The above diagram shows that we transpose the generalized tensor functions to generalized matrix functions and some algorithms in matrices cases maybe useful upon the transposed matrix problems.\par
\begin{corollary}\label{cor1-10}
Let $\tens{A}\in \mathbb{C}^{n\times n \times p}$ be a complex tensor and $f:\mathbb{C}^{n\times n \times p}\rightarrow \mathbb{C}^{n\times n \times p}$ be the induced standard tensor function \cite{Lund1}. `$\bcirc$' is the tensor block circulant operator. Then the standard tensor function also induced the matrix to tensor isomorphism, that is $f$ commutes with $\bcirc$:
\begin{equation}
f(\bcirc(\tens{A}))=\bcirc(f(\tens{A})).
\end{equation}
That is to say, we have the following commutative diagram
$$
\begin{array}{ccc}
\mathbb{C}^{n\times m\times p} \ (*,+, \tens{I}_n,0)& \xrightarrow{\text{$f$}}&\mathbb{C}^{n\times m\times p} \ (*,+, \tens{I}_n,0)\\
\Bigg\downarrow{\text{$\bcirc$}} && \Bigg\downarrow{\text{$\bcirc$}}\\
\mathbb{C}^{np\times mp} \ (\cdot,+, I_{np},0)& \xrightarrow{\text{$f$}}&\mathbb{C}^{np\times mp}\ (\cdot,+, I_{np},0)
\end{array}
$$
\end{corollary}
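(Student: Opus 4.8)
The plan is to mirror the argument used for the generalized tensor function in Theorem \ref{the1-14}, replacing the blockwise generalized matrix function by the blockwise standard matrix function. First I would unwind the definition of the standard tensor T-function, $f(\tens{A})=\fold(f(\bcirc(\tens{A}))\,\widehat{E_{1}}^{np\times n})$, and recall that $\bcirc(\tens{A})$ is a block circulant matrix of size $np\times np$, hence admits the Fourier block-diagonalization
$$
\bcirc(\tens{A})=(F_{p}\otimes I_{n})\,{\rm diag}(D_1,\dots,D_p)\,(F_{p}^{H}\otimes I_{n}),
$$
with each $D_i\in\mathbb{C}^{n\times n}$ the $i$-th Fourier block. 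Because $\tens{A}$ is F-square, all the dimensions here are consistent and the spectra of the $D_i$ are exactly what is needed for $f(\tens{A})$ to be well defined.

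Next I would pull $f$ through the fixed unitary conjugation. Since the standard matrix function satisfies $f(XMX^{-1})=Xf(M)X^{-1}$ and acts on a block-diagonal matrix blockwise, this gives
$$
f(\bcirc(\tens{A}))=(F_{p}\otimes I_{n})\,{\rm diag}(f(D_1),\dots,f(D_p))\,(F_{p}^{H}\otimes I_{n}).
$$
The right-hand side is precisely the Fourier-diagonal form of a block circulant matrix, so $f(\bcirc(\tens{A}))$ is again block circulant; equivalently, there is a unique tensor $\tens{C}\in\mathbb{C}^{n\times n\times p}$ with $\bcirc(\tens{C})=f(\bcirc(\tens{A}))$.

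It then remains to identify $\tens{C}$ with $f(\tens{A})$. A block circulant matrix is determined by its first block column, and multiplying on the right by $\widehat{E_{1}}^{np\times n}=\unfold(\tens{I}_{n\times n \times p})$ extracts exactly that first block column; folding it recovers $\tens{C}$. On the other hand, this same operation applied to $f(\bcirc(\tens{A}))$ is, by definition, $f(\tens{A})$. Hence $\tens{C}=f(\tens{A})$, so $\bcirc(f(\tens{A}))=f(\bcirc(\tens{A}))$, which is the claimed commutation; the invertibility of $\bcirc$ then yields the displayed commutative diagram.

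The main obstacle is the middle step: justifying that the standard matrix function $f(\bcirc(\tens{A}))$ is block circulant, since $f$ is defined through the Jordan canonical form rather than as a literal polynomial. The Fourier diagonalization makes this transparent, because the conjugating factor $(F_{p}\otimes I_{n})$ is fixed and independent of $\tens{A}$, reducing everything to the blockwise functions $f(D_i)$, and any Fourier-diagonal matrix of this shape is block circulant. Alternatively, one may invoke the Hermite-interpolation representation of matrix functions as polynomials in the argument together with Lemma \ref{lem1-3}(2), which shows every power $\bcirc(\tens{A})^{j}=\bcirc(\tens{A}^{j})$ is block circulant, so any polynomial in $\bcirc(\tens{A})$ stays block circulant. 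Either route closes the gap and completes the proof.
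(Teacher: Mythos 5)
Your proposal is correct and follows essentially the same route as the paper, which proves this corollary by repeating the argument of Theorem \ref{the1-14}: Fourier block-diagonalize $\bcirc(\tens{A})$, push the matrix function through the fixed unitary conjugation onto the diagonal blocks $D_i$, and match the result with $\bcirc(f(\tens{A}))$. Your added care in identifying the resulting block circulant matrix with $f(\tens{A})$ via the first block column and $\widehat{E_{1}}^{np\times n}$ is a welcome detail that the paper leaves implicit.
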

\begin{proof}
By the same kind of method as the proof of Theorem \ref{the1-14}.
\end{proof}
In probability theory, a stochastic matrix is a square matrix with all rows and columns summing to 1. They are very useful to describe the transitions of a Markov chain. The stochastic matrix was first developed by Andrey Markov at the beginning of the 20th century and it is found varieties of usage throughout quite a lot of scientific fields, such as probability theory, statistics, finance and linear algebra, as well as computer science and population genetics and so on \cite{Asmussen1}. In this paper, we generalize the concept of doubly stochastic matrices to third order tensors as follows:
\begin{definition}\label{def1-12} {\rm (Doubly F-stochastic tensor)} A tensor $\tens{A}\in \mathbb{R}^{n\times n\times p}$ is called doubly F-stochastic if and only if
\begin{equation}
\tens{A}*{\textbf e}=\tens{A}^{\top}*{\textbf e}={\textbf e},
\end{equation}
where ${\textbf e}\in \mathbb{R}^{n\times 1 \times p}$ is a tensor whose elements are all $1$.
\end{definition}
This definition is to say, if a tensor $\tens{A}\in \mathbb{R}^{n\times n\times p}$ is F-doubly stochastic, the sum of all the elements of its horizontal slices and lateral slices are all $1$ (See Fig. $2$). \par
We can also define the right stochastic tensor (left stochastic tensor) with each lateral (horizontal) slice summing to 1.\par
As a beautiful application and illustration of how to use the above isomorphism theorem, we have introduced the concept of doubly F-stochastic tensor and we will prove this set is invariant under the generalized tensor function. In order to prove this, we need the following lemma.
\begin{lemma}\label{lem1-7} {\rm \cite{Benzi1}}
If $A\in \mathbb{R}^{n\times n}$ is doubly stochastic, $f$ satisfies the same assumptions as in Theorem \ref{the1-11} and $f(1)=1$, then $f^{\Diamond}(A)$ is also doubly stochastic.
\end{lemma}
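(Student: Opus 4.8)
The plan is to verify the three defining properties of a doubly stochastic matrix for $f^{\Diamond}(A)$: entrywise nonnegativity, unit row sums $f^{\Diamond}(A){\textbf e}={\textbf e}$, and unit column sums ${\textbf e}^{\top}f^{\Diamond}(A)={\textbf e}^{\top}$, where ${\textbf e}$ is the all-ones vector in $\mathbb{R}^{n}$. A preliminary remark makes the hypotheses transparent: since $A$ is doubly stochastic one has $\norm{A}_{1}=\norm{A}_{\infty}=1$, hence $\norm{A}_{2}\leq\sqrt{\norm{A}_{1}\norm{A}_{\infty}}=1$, while $A{\textbf e}={\textbf e}$ forces $\norm{A}_{2}\geq 1$; therefore $\norm{A}_{2}=1$. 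Thus the requirement $R>\norm{A}_{2}$ of Theorem \ref{the1-11} reduces to $R>1$, every singular value lies strictly inside the disk of convergence, and $f^{\Diamond}(A)$ is well defined.

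First I would settle nonnegativity. Writing $f(z)=\sum_{k=0}^{\infty}c_{2k+1}z^{2k+1}$ with $c_{2k+1}\geq 0$, the matrix analogue of the expansion established in the proof of Theorem \ref{the1-11} gives $f^{\Diamond}(A)=\sum_{k=0}^{\infty}c_{2k+1}(AA^{\top})^{k}A$. Every factor $A$ and $AA^{\top}$ is entrywise nonnegative because $A\geq 0$, and the coefficients $c_{2k+1}$ are nonnegative, so each term and hence the whole series has nonnegative entries, giving $f^{\Diamond}(A)\geq 0$.

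Next, the row sums, where the key point is that ${\textbf e}$ is the singular vector attached to the singular value $1$. From $A^{\top}A{\textbf e}=A^{\top}{\textbf e}={\textbf e}$, the vector ${\textbf e}$ is an eigenvector of the symmetric positive semidefinite matrix $A^{\top}A$ with eigenvalue $1$; consequently $\sqrt{A^{\top}A}\,{\textbf e}={\textbf e}$, and by the functional calculus $f(\sqrt{A^{\top}A})\,{\textbf e}=f(1){\textbf e}={\textbf e}$ (using $f(1)=1$), while $(\sqrt{A^{\top}A})^{\dag}{\textbf e}={\textbf e}$ because the eigenvalue $1$ is nonzero, so the Moore--Penrose inverse acts as the genuine inverse on this eigenvector. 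Invoking the representation $f^{\Diamond}(A)=A(\sqrt{A^{\top}A})^{\dag}f(\sqrt{A^{\top}A})$ from Lemma \ref{lem1-5}(4) (with $A^{H}=A^{\top}$), I obtain
$$
f^{\Diamond}(A){\textbf e}=A(\sqrt{A^{\top}A})^{\dag}f(\sqrt{A^{\top}A}){\textbf e}=A(\sqrt{A^{\top}A})^{\dag}{\textbf e}=A{\textbf e}={\textbf e}.
$$
For the column sums I would apply Lemma \ref{lem1-5}(1), namely $[f^{\Diamond}(A)]^{\top}=f^{\Diamond}(A^{\top})$: since $A^{\top}$ is again doubly stochastic, the row-sum argument applied to $A^{\top}$ yields $f^{\Diamond}(A^{\top}){\textbf e}={\textbf e}$, i.e.\ $[f^{\Diamond}(A)]^{\top}{\textbf e}={\textbf e}$, which transposes to ${\textbf e}^{\top}f^{\Diamond}(A)={\textbf e}^{\top}$. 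Combined with nonnegativity, this shows $f^{\Diamond}(A)$ is doubly stochastic.

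The step I expect to require the most care is the simultaneous transfer of the eigenvector ${\textbf e}$ through the square root, the pseudoinverse, and the functional calculus: one must justify that ${\textbf e}$ lies in the range of $\sqrt{A^{\top}A}$ (so that the pseudoinverse really inverts there) and that $f(\sqrt{A^{\top}A}){\textbf e}=f(1){\textbf e}$ holds even though $f$ is only prescribed on the singular values. This is precisely where the oddness and analyticity hypotheses inherited from Theorem \ref{the1-11} together with the normalization $f(1)=1$ are indispensable; the remainder is routine bookkeeping with the two equivalent representations of $f^{\Diamond}$.
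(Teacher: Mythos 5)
Your proof is correct. Note that the paper itself offers no proof of this lemma: it is imported verbatim from \cite{Benzi1}, so the only comparison available is with the standard argument there, which yours essentially reproduces. All three verifications go through: $\norm{A}_2=1$ so $f^{\Diamond}(A)$ is well defined under the hypotheses of Theorem \ref{the1-11}; the expansion $f^{\Diamond}(A)=\sum_{k\ge 0}c_{2k+1}(AA^{\top})^{k}A$ with $c_{2k+1}\ge 0$ and $A\ge 0$ gives entrywise nonnegativity; and the row and column sums are preserved because ${\textbf e}$ is a singular vector of $A$ attached to the singular value $1$ and $f(1)=1$. The one stylistic remark worth making is that your detour through Lemma \ref{lem1-5}(4), the square root, and the pseudoinverse for the row sums is more machinery than needed: the same series you use for nonnegativity gives $f^{\Diamond}(A){\textbf e}=\sum_{k}c_{2k+1}(AA^{\top})^{k}A{\textbf e}=\bigl(\sum_{k}c_{2k+1}\bigr){\textbf e}=f(1){\textbf e}={\textbf e}$ directly, since $A{\textbf e}=A^{\top}{\textbf e}={\textbf e}$, and the column sums then follow by transposition exactly as you argue. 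Your functional-calculus route is nonetheless sound, and the point you flag as delicate (that the eigenvalue $1$ survives the square root, the pseudoinverse, and $f$) is correctly resolved by the observations that $1\neq 0$ and that $f$ is analytic on a disk of radius $R>1$.
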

\begin{theorem}\label{the1-15}
If $\tens{A}\in \mathbb{R}^{n\times n\times p}$ is F-doubly stochastic, $f$ satisfies the same assumptions as above and $f(1)=1$, then $f^{\Diamond}(\tens{A})$ is also F-doubly stochastic.
\end{theorem}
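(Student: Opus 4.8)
The plan is to reduce the tensor statement to its matrix analogue, Lemma \ref{lem1-7}, through the block circulant isomorphism of Theorem \ref{the1-14}. The first step is to translate the defining conditions of an F-doubly stochastic tensor into conditions on the matrix $\bcirc(\tens{A})$. Since every entry of $\mathbf{e}\in\mathbb{R}^{n\times 1\times p}$ equals $1$, each frontal slice of $\mathbf{e}$ is the all-ones vector, so $\unfold(\mathbf{e})$ is the all-ones vector $\mathbf{1}_{np}\in\mathbb{R}^{np}$. By the definition of the T-product, $\tens{A}*\mathbf{e}=\fold(\bcirc(\tens{A})\unfold(\mathbf{e}))$, hence the condition $\tens{A}*\mathbf{e}=\mathbf{e}$ is equivalent to $\bcirc(\tens{A})\mathbf{1}_{np}=\mathbf{1}_{np}$, i.e.\ every row sum of $\bcirc(\tens{A})$ equals $1$. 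Using $\bcirc(\tens{A}^{\top})=(\bcirc(\tens{A}))^{\top}$ from Lemma \ref{lem1-3}(4), the condition $\tens{A}^{\top}*\mathbf{e}=\mathbf{e}$ is likewise equivalent to $(\bcirc(\tens{A}))^{\top}\mathbf{1}_{np}=\mathbf{1}_{np}$, i.e.\ every column sum of $\bcirc(\tens{A})$ equals $1$. Therefore $\tens{A}$ is F-doubly stochastic if and only if the block circulant matrix $\bcirc(\tens{A})$ is doubly stochastic.

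Given this equivalence, the argument proceeds in three moves. First, since $\tens{A}$ is F-doubly stochastic, $\bcirc(\tens{A})$ is a doubly stochastic matrix, and because the tensor norm satisfies $\norm{\tens{A}}=\norm{\bcirc(\tens{A})}$ by Definition \ref{def1-6}(3), the hypotheses imposed on $f$ (in particular the radius-of-convergence condition $R>\norm{\tens{A}}_2$) carry over verbatim to $\bcirc(\tens{A})$; Lemma \ref{lem1-7} then yields that $f^{\Diamond}(\bcirc(\tens{A}))$ is again doubly stochastic. Second, Theorem \ref{the1-14} gives the commutation $f^{\Diamond}(\bcirc(\tens{A}))=\bcirc(f^{\Diamond}(\tens{A}))$, so $\bcirc(f^{\Diamond}(\tens{A}))$ is doubly stochastic. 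Third, applying the equivalence of the first paragraph in reverse to the tensor $f^{\Diamond}(\tens{A})$ — which is legitimate precisely because $\bcirc$ is an isomorphism — shows that $f^{\Diamond}(\tens{A})$ is F-doubly stochastic, which is the claim.

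The routine part is the bookkeeping for $\unfold(\mathbf{e})$ and $\fold$, which is immediate from the definitions. The step that deserves the most care is confirming that all hypotheses of Lemma \ref{lem1-7} transfer cleanly to $\bcirc(\tens{A})$: besides the norm condition above, one should verify that $\bcirc(\tens{A})$ inherits entrywise nonnegativity, so that it is genuinely doubly stochastic rather than merely row- and column-sum preserving. This holds because $\bcirc$ only rearranges the frontal slices of $\tens{A}$ and hence preserves nonnegativity, so no new obstacle arises. The essential content of the proof is thus the dictionary between the tensor conditions $\tens{A}*\mathbf{e}=\tens{A}^{\top}*\mathbf{e}=\mathbf{e}$ and the corresponding row/column sum conditions on $\bcirc(\tens{A})$; once that is in place, the isomorphism of Theorem \ref{the1-14} together with Lemma \ref{lem1-7} does all the remaining work.
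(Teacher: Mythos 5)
Your proposal is correct and follows essentially the same route as the paper: translate the F-doubly stochastic condition into double stochasticity of $\bcirc(\tens{A})$ via $\unfold(\mathbf{e})$, invoke Lemma \ref{lem1-7}, commute $f^{\Diamond}$ with $\bcirc$ using Theorem \ref{the1-14}, and translate back. Your extra remark that $\bcirc$ preserves entrywise nonnegativity is a sensible addition the paper omits, but it does not change the argument.
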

\begin{proof}
Since we have
$$
\tens{A}*{\textbf e} =\tens{A}^{\top}*{\textbf e}={\textbf e},
$$
that is equivalent to the equation
$$
\bcirc(\tens{A}) \unfold({\textbf e})=\bcirc(\tens{A})^{\top}\unfold({\textbf e}) =\unfold({\textbf e}) .
$$
So we have $\bcirc(\tens{A})$ is a stochastic matrix. By the previous Lemma \ref{lem1-7}, we have $f^{\Diamond}(\bcirc(\tens{A}))$ is also a stochastic matrix. Since we have the matrix tensor isomorphism, it comes to $\bcirc(f^{\Diamond}(\tens{A}))$ is a stochastic matrix. That is to say,
$$
\bcirc(f^{\Diamond}(\tens{A})) \unfold({\textbf e})=\bcirc(f^{\Diamond}(\tens{A}))^{\top}\unfold({\textbf e})=\unfold({\textbf e})
$$
which is equivalent to
$$
f^{\Diamond}(\tens{A})*{\textbf e} =f^{\Diamond}(\tens{A})^{\top}*{\textbf e}={\textbf e}.
$$
That is to say $f^{\Diamond}(\tens{A})$ is also a F-doubly stochastic tensor.
\end{proof}
\subsection{Invariant tensor cones}
In the previous subsections, we proposed structures of tensors which is preserved under generalized tensor functions. In this subsection, we will talk about tensor cones which is another type of invariant tensor structure.\par
Let $\tens{U}\in \mathbb{C}^{m\times m \times p}$ and $\tens{V}\in \mathbb{C}^{n\times n \times p}$ be two fixed unitary tensors. Denote the set $\tens{S}_{\tens{U},\tens{V}}$ be the set of $m\times n \times p$ complex tensors of the form:
$$
\tens{A}=\tens{U}*\tens{S}* \tens{V}^{H},
$$
where
$$\bcirc(\tens{S})=
(F_{p}\otimes I_m )
\begin{bmatrix}
 (\Sigma_1)_{r} &  & \\
&  (\Sigma_2)_{r}   & &\\
&  & \ddots &    \\

&  & & (\Sigma_{p})_{r} \\
\end{bmatrix}(F_{p}^{H}\otimes I_n ),
$$
$$
(\Sigma_i)_{r}={\rm diag}(c_1^{(i)},c_2^{(i)},\cdots,c_{r}^{(i)},0,0,\cdots,0)\in \mathbb{R}^{m\times n},
$$
here $r$ is the tubal rank of the tensor $\tens{A}$. The singular values $c_j^{(i)}$ satisfy $c_1^{(i)}\geq c_2^{(i)}\geq \cdots \geq c_r^{(i)}\geq 0$. Then the set $\tens{S}_{\tens{U},\tens{V}}$ is a closed convex cone, i.e. the tensor cone is a closed set under the Euclidean topology. Its interior is the set of all tensors $\tens{A}\in\tens{S}_{\tens{U},\tens{V}}$ whose tubal-rank ${\rm rank}_t(\tens{A})=r$.\par
If $f: \mathbb{R}\rightarrow \mathbb{R}$ is any nonnegative non-increasing function for $x>0$, then because of the definition of generalized tensor function, $\tens{S}_{\tens{U},\tens{V}}$ is invariant under $f^{\Diamond}$. Further, if $f(x)>0$ for $x>0$, the induced function $f^{\Diamond}$ maps the interior of the tensor cone $\tens{S}_{\tens{U},\tens{V}}$ to itself.

\section*{Acknowledgments}
The authors would like to thank Prof. M. Benzi for his preprint \cite{Aurentz1} and
the useful discussions with Prof. C. Ling, Prof.  Ph. Toint, Prof. Z. Huang along with his team members, Dr. W. Ding, Dr. Z. Luo, Dr. X. Wang, and Mr. C. Mo.

\end{document}